\documentclass[11pt]{article}
\usepackage[T1]{fontenc}
\usepackage{lmodern}
\usepackage[margin=1in]{geometry}
\usepackage{amsmath,amssymb,amsthm,mathtools,microtype}
\usepackage[hidelinks]{hyperref}
\numberwithin{equation}{section}
\newtheorem{theorem}{Theorem}[section]
\newtheorem{proposition}[theorem]{Proposition}
\newtheorem{lemma}[theorem]{Lemma}
\newtheorem{corollary}[theorem]{Corollary}
\DeclareMathOperator{\grad}{grad}
\newcommand{\dd}{\,d}
\newcommand{\Ch}{\mathrm{Ch}}
\newcommand{\Ric}{\operatorname{Ric}}
\newcommand{\divmu}{\operatorname{div}_{\mu}}
\newcommand{\R}{\mathbb R}
\hypersetup{pdftitle={Continuation and convergence for the higher dimensional conformal Chern-Calabi flow},pdfauthor={}}
\title{Continuation and convergence for the higher dimensional conformal Chern--Calabi flow}

\author{%
Hongyu Chen\thanks{School of Mathematics, Sichuan University,
Chengdu 610064, China.
Email: \texttt{hongyu.chern@gmail.com}.}
\qquad
Li Ma\thanks{Corresponding author.
School of Mathematics and Physics,
University of Science and Technology Beijing,
Beijing 100083, China.
Email: \texttt{nuslma@126.com}.}
\qquad
Li Sheng \thanks{
School of Mathematics, Sichuan University, Chengdu 610064, China.
Email: \texttt{lsheng@scu.edu.cn}.
}}

\date{August 2026}

\begin{document}
\maketitle
\begin{abstract}
We study the volume-normalized gradient flow of the squared Chern scalar
curvature on compact Hermitian manifolds of complex dimension $n\ge3$.
The flow extends smoothly past a finite time if the spatial supremum of
Chern scalar curvature belongs to $L^2$ in time. A uniform curvature bound
in the evolving $L^p$ norm, for one fixed $p>n$, gives global existence and
smooth convergence of the whole trajectory to a constrained critical metric.
On Chern scalar-flat backgrounds, small initial $L^p$ curvature gives
exponential convergence to the background. A corresponding condition on
the trace-free first Chern--Ricci form gives exponential convergence on
Chern--Ricci-flat Hermitian and positive K\"ahler--Einstein backgrounds.
The smallness thresholds are independent of the initial higher derivatives.
We also construct constant Chern scalar curvature backgrounds of either
sign with arbitrarily small smooth perturbations whose solutions, if
global, cannot converge back to the background.
\end{abstract}

\noindent\textit{2020 Mathematics Subject Classification.}
Primary 53E30; Secondary 35K35, 53C55, 35B40.
\smallskip

\noindent\textit{Keywords.} Hermitian manifolds,
Chern scalar curvature, conformal gradient flow, continuation, smooth convergence.

\section{Introduction}

In this work, we study the higher dimensional conformal flow on compact Hermitian manifolds and we extend some of results in \cite{Ma25}.

Let $(M^n,\omega)$ be a smooth compact connected Hermitian manifold without
boundary, of complex dimension $n\ge3$. Normalize
$d\mu=\omega^n/n!$ by $\mu(M)=1$, and write
$$
L=g^{i\bar j}\partial_i\partial_{\bar j},\qquad
R=-g^{i\bar j}\partial_i\partial_{\bar j}\log\det(g_{k\bar l}).
$$
Thus $L$ is nonpositive at a local maximum, and $R$ is the Chern scalar
curvature of $\omega$. The formal adjoint $L^*$ is taken with respect to
the fixed measure $d\mu$. For $\omega_f=e^f\omega$,
\begin{equation}\label{eq:curvature}
R_f=e^{-f}(R-nLf),\qquad d\mu_f=e^{nf}d\mu.
\end{equation}
The squared Chern scalar curvature functional on unit-volume conformal
metrics is
\begin{equation}\label{eq:energy}
E(f)=\int_MR_f^2\dd\mu_f
=\int_Me^{(n-2)f}(R-nLf)^2\dd\mu,\qquad
\int_Me^{nf}\dd\mu=1.
\end{equation}
For the moving inner product
$\langle h,k\rangle_f=\int_Mhk\dd\mu_f$, its constrained negative gradient
flow is
\begin{equation}\label{eq:flow}
f_t=2ne^{-nf}L^*\!\left(e^{(n-2)f}(R-nLf)\right)
-(n-2)e^{-2f}(R-nLf)^2+(n-2)E(f).
\end{equation}

Ma and Sheng obtained existence and convergence results for the conformal
Chern--Calabi flow on compact complex surfaces with Gauduchon metrics;
see also Sheng's lecture announcement~\cite{Sheng25}. 
For the conformal $p$-Calabi functional, Yang and Zhang derive the
Euler--Lagrange equation and construct a minimizing critical metric when
the energy exponent equals the complex dimension~\cite[Theorems~1.1--1.2]{YZ26}. See also Chavel's work in \cite{Chavel70}.
Their variational equation includes the stationary equation of
\eqref{eq:flow}. For the squared functional in dimension $n\ge3$, the
weight $e^{(n-2)f}$ couples the second derivatives to the conformal factor.
The continuation problem requires control of this factor near a finite
terminal time; convergence also requires compactness of the trajectory
as time tends to infinity.

Our first result concerns a finite terminal time.

\begin{theorem}\label{thm:continuation}
Let $0<T<\infty$, and let
$f\in C^\infty(M\times[0,T);\mathbb R)$ solve \eqref{eq:flow}, with
$\int_Me^{nf(0)}\dd\mu=1$. If
\begin{equation}\label{eq:endpoint}
\int_0^T\|R_f(t)\|_\infty^2\dd t<\infty,
\end{equation}
then the solution extends smoothly to $[0,T+\delta)$ for some $\delta>0$.
Its conformal metrics remain positive and have volume one.
\end{theorem}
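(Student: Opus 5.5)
The plan is to obtain uniform $C^0$ control of $f$ on $[0,T)$ from \eqref{eq:endpoint}, then uniform higher regularity by parabolic theory for the fourth-order quasilinear operator, and finally restart by short-time existence. First, volume is preserved: differentiating $\int_M e^{nf}\dd\mu$ and using \eqref{eq:flow} gives
\[
\frac{d}{dt}\int_M e^{nf}\dd\mu = n\int_M f_t\,e^{nf}\dd\mu .
\]
The term $2n\int_M L^*(e^{(n-2)f}(R-nLf))\dd\mu$ vanishes, because $L1=0$. The remaining terms cancel: $-(n-2)\int_M e^{(n-2)f}(R-nLf)^2\dd\mu+(n-2)E(f)=0$ when the volume is one, so by uniqueness for the ODE the volume stays one. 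Energy monotonicity $\frac{d}{dt}E=-\|f_t\|_f^2\le0$ holds by the gradient structure, so $E(f(t))\le E(f(0))$ and $\int_0^T\int_M f_t^2\dd\mu_f\dd t\le E(f(0))$.

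The key step is a two-sided bound on $f$. I rewrite the flow as an evolution for $R_f$ and use it to control $f$ pointwise. Since $R_f=e^{-f}(R-nLf)$, we have $nLf=R-e^{f}R_f$, and $f$ solves an elliptic equation at each time whose right-hand side is controlled by $|R|+e^{f}\|R_f\|_\infty$. The $L^1$-type bound $\int_0^T\|f_t\|_{L^2(\mu_f)}dt\le \sqrt{T E(0)}$ together with volume one controls the average drift. To get pointwise bounds I would apply the maximum principle at the spatial maximum point of $f$. There $Lf\le0$, so $e^{f_{\max}}R_f\ge R$. At the minimum, $Lf\ge 0$, so $e^{f_{\min}}R_f\le R$. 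These inequalities alone do not bound $f$.

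Instead I would bound the time derivative. I would show $|\partial_t f|\lesssim \|R_f\|_\infty^2+E+{}$(terms from $L^*$), where the $L^*$ term is handled by expanding the flow as
\[
f_t=-2n\,e^{-nf}\,\Delta_\mu\bigl(e^{(n-2)f}\,\cdot\bigr)+\dots
\]
and rewriting $Lf$ in terms of $R_f$. I expect this to be the main obstacle: the fourth-order term involves $L R_f$, which is not controlled by $\|R_f\|_\infty$.

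My first attempt at this obstacle is an integral approach, using Moser iteration on $e^{\pm kf}$. I would test the flow against $e^{kf}$ and integrate by parts; the fourth-order term then yields a nonpositive leading term plus lower-order terms bounded by $\|R_f\|_\infty^2\int e^{kf}$. Grönwall then gives $\sup_{t<T}\int_M e^{\pm kf}\dd\mu<\infty$ for every $k$, with constants depending on $\int_0^T\|R_f\|^2_\infty$; this is exactly where the $L^2$-in-time hypothesis is used. These integral bounds feed into the elliptic equation $nLf=R-e^fR_f$. Its right-hand side is then in $L^p$ for all $p$, uniformly in $t$, and $W^{2,p}$ estimates for the second-order elliptic operator $L$ (modulo constants, using $\int e^{nf}=1$ to fix the constant) give uniform $C^{1,\alpha}$ bounds on $f$.

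With $f$ uniformly bounded in $C^{1,\alpha}$, the flow is a uniformly parabolic fourth-order quasilinear equation with leading term $-2n^2e^{-2f}L^*L f$ (up to lower order). Standard Schauder and energy bootstrap arguments, as in Calabi-flow regularity, then give uniform $C^k$ bounds for every $k$ on $[T/2,T)$. Thus $f(t)\to f(T)$ smoothly, the metric $e^{f(T)}\omega$ is positive, and short-time existence for \eqref{eq:flow} from $f(T)$ extends the solution to $[0,T+\delta)$. Volume one persists by the first step.
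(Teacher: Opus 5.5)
Your proposal has a genuine gap where the moment bounds are turned into uniform $C^{1,\alpha}$ bounds. Hypothesis \eqref{eq:endpoint} only makes $t\mapsto\|R_f(t)\|_\infty$ square integrable on $[0,T)$; it may be unbounded as $t\uparrow T$. So in $nLf=R-e^fR_f$ the right-hand side is bounded in $L^p$ only by $C+\|R_f(t)\|_\infty\|e^{f(t)}\|_{L^p}$. Your $W^{2,p}$ step then gives $\|f(t)\|_{C^{1,\alpha}}\le C(1+\|R_f(t)\|_\infty)$, which is square integrable in time but not bounded. The claim ``in $L^p$ for all $p$, uniformly in $t$'' is therefore false. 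With it fall the uniform $C^0$ bound, the positivity of a terminal metric, and the restart.

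This is the heart of the theorem, not a technicality. Hypothesis \eqref{eq:endpoint} is invariant under the parabolic rescaling noted after the theorem, so slice-by-slice elliptic estimates cannot exploit it. By the three-torus example in the introduction, $e^{f}$ can genuinely reach zero at a finite time. Even granting a uniform $C^{1,\alpha}$ bound, continuation would not be a routine Schauder bootstrap, since $(Lf)^2$ and $\langle\nabla f,\nabla Lf\rangle$ scale like the principal part. The paper instead restarts with Proposition~\ref{prop:local}, whose lifetime depends only on a $C^0$ bound and a modulus of continuity.

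The paper supplies the missing mechanism in two steps.

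First, Proposition~\ref{weighted} is a blow-up argument for the scale-invariant quantity $e^{\sigma f}\sum_{j\le4}|\nabla^jf|^{1/j}+e^{(\sigma+1/2)f}|f_t|^{1/4}$. After rescaling at first hitting points, the squared $L^2_tL^\infty_x$ norm of the rescaled curvature is at most $\int\|R_f\|_\infty^2\dd t$ over a physical interval of length $a\tau_k\to0$, so it tends to zero. Lower amplitude bounds come from the Harnack inequality at good times. The limit is a harmonic function with one-sided logarithmic growth, hence constant by Liouville, contradicting the normalization at the origin. With $\sigma=1/2$ this bounds $f$ above and makes $e^{f/2}$ uniformly Lipschitz in space and $e^{4f}$ Lipschitz in time. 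Hence $e^{f/2}\to V_T$ uniformly, with $V_T$ Lipschitz.

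Second, Lemma~\ref{negativemoments} and Fatou's lemma give $\int_MV_T^{-2n}\dd\mu<\infty$. This excludes zeros, because $V_T\le C\,d(\cdot,x_0)$ near a zero $x_0$ in real dimension $2n$.

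Your Gr\"onwall step for $\int_Me^{-kf}\dd\mu$ also does not close as written. Bound the mixed term $\int_Me^{-(k+2)f}(R-nLf)|\nabla f|^2\dd\mu$ using $|R-nLf|\le\|R_f\|_\infty e^f$ and one integration by parts. This produces the more negative moment $\int_Me^{-(k+1)f}\dd\mu$, which H\"older cannot control by $\int_Me^{-kf}\dd\mu$. (For positive moments with $k\ge(n+2)/2$ the extra moment is lower, and your argument works.) The paper closes it with the pointwise bound \eqref{inverseinfty}, $\|e^{-f(t)}\|_\infty\le C(1+\|R_f(t)\|_\infty)$. That bound comes from a nonnegative Green kernel for the invariant density $\pi$ of $L$, together with the uniform integrability of $e^{nf(t)}$ from Lemma~\ref{lem:density}. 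The resulting Gr\"onwall factor $C(1+\|R_f\|_\infty^2)$ is integrable exactly by \eqref{eq:endpoint}.
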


For the flat unnormalized equation corresponding to \eqref{eq:flow},
the change $f(x,t)\mapsto f(\lambda x,\lambda^4t)$ multiplies scalar
curvature by $\lambda^2$. The integral in \eqref{eq:endpoint} is therefore
invariant under this rescaling. The proof combines weighted derivative bounds with negative moments of
the conformal factor to exclude terminal zeros.

For long-time convergence we use a uniform spatial curvature bound with
one fixed exponent above the dimension threshold.

\begin{theorem}\label{thm:main}
Let $f_0\in C^\infty(M;\mathbb R)$ satisfy
$\int_Me^{nf_0}\dd\mu=1$, and let $f$ be the maximal smooth solution of
\eqref{eq:flow} on $[0,T_{\max})$ with $f(0)=f_0$. If, for some fixed
$p\in(n,\infty)$,
$$
\sup_{0\le t<T_{\max}}
\left(\int_M|R_f(t)|^p\dd\mu_f(t)\right)^{1/p}<\infty,
$$
then $T_{\max}=\infty$. There is a smooth real function $f_\infty$ with
$\int_Me^{nf_\infty}\dd\mu=1$ such that
$$
\lim_{t\to\infty}\|f(t)-f_\infty\|_{C^k(M)}=0
\qquad\text{for every integer }k\ge0.
$$
The limit is a constrained critical point of $E$ and satisfies
\begin{equation}\label{eq:stationary}
-2nL^*\!\left(e^{(n-2)f_\infty}(R-nLf_\infty)\right)
+(n-2)e^{(n-2)f_\infty}(R-nLf_\infty)^2
=(n-2)E(f_\infty)e^{nf_\infty}.
\end{equation}
\end{theorem}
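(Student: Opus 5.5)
We prove global existence first and convergence afterwards. The main tool is the uniform $L^p$ bound with $p>n$, the threshold at which the conformal factor is controlled. Write $u=e^{(n-2)f/2}$, or use $\omega_f$ directly. The curvature equation $R_f=e^{-f}(R-nLf)$ is a second order elliptic equation for $f$ with right-hand side $e^{f}R_f$. In the evolving measure, $\|R_f\|_{L^p(d\mu_f)}$ with $2p>2n$ (real dimension $2n$) is the scale-invariant quantity that is subcritical. We therefore expect an $\varepsilon$-regularity and compactness argument. Volume one and a uniform $L^p$ curvature bound should give uniform upper and lower bounds on $f$, hence $C^{1,\alpha}$ control.

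\textbf{Step 1 ($C^0$ bounds).} We take a maximum point of $f$. A blow-up rescaling around it, together with the volume normalization, rules out concentration. A bubble would be a limit metric on $\mathbb C^n$ with curvature in $L^p$, $p>n$, and positive volume. Scaling sends its $L^p$ norm to zero, so it is flat, and then a Liouville argument excludes it. This gives $\sup f\le C$. A lower bound on $f$ comes from the negative-moment and Harnack-type estimates already used for Theorem~\ref{thm:continuation}. The equation $-nLf=e^{f}R_f-R$ has right-hand side in $L^{p'}(d\mu)$ for some $p'>n$, by the upper bound on $f$. Moser iteration then gives oscillation control and $f\ge -C$. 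Elliptic $W^{2,p'}$ theory yields $\|f(t)\|_{C^{1,\alpha}}\le C$ uniformly in $t$.

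\textbf{Step 2 (global existence and smoothing).} With $f$ two-sided bounded, \eqref{eq:flow} is uniformly fourth order parabolic with $C^{1,\alpha}$ coefficients. Interpolating the weighted derivative estimates from the proof of Theorem~\ref{thm:continuation} gives uniform $C^k$ bounds on $[\tau,T_{\max})$ for every $k$. In particular $\|R_f\|_\infty$ is bounded, so \eqref{eq:endpoint} holds on any finite interval, and Theorem~\ref{thm:continuation} forces $T_{\max}=\infty$. The $C^k$ bounds are uniform on $[1,\infty)$.

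\textbf{Step 3 (convergence).} The flow is a gradient flow, so
$\int_0^\infty\|f_t\|_{L^2(d\mu_f)}^2\dd t\le E(f_0)<\infty$. By compactness there are subsequential smooth limits, and each is a constrained critical point satisfying \eqref{eq:stationary}. We obtain this by taking limits in \eqref{eq:flow}, after multiplying by $e^{nf}$ to pass from $L^*(\cdot)$ form to \eqref{eq:stationary}. Uniqueness of the limit is the main obstacle, since critical points need not be isolated. We would use a \L{}ojasiewicz--Simon gradient inequality for $E$ restricted to the real-analytic constraint manifold $\{\int e^{nf}\dd\mu=1\}$. This is legitimate because $E$ is analytic in $f$ and its Hessian at a critical point is a fourth order elliptic, hence Fredholm, operator on $C^{4,\alpha}$. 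The inequality
$|E(f)-E(f_\infty)|^{1-\theta}\le C\|\nabla E(f)\|_{L^2}$
near $f_\infty$ makes $\frac{d}{dt}(E-E_\infty)^\theta\le -c\|f_t\|_{L^2}$. Hence $\int^\infty\|f_t\|_{L^2}\dd t<\infty$, so the trajectory converges in $L^2$. Interpolation with the uniform $C^k$ bounds upgrades this to convergence in every $C^k$.
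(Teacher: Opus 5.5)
\textbf{Step 1.} Your Step 3 is essentially the paper's \L{}ojasiewicz--Simon argument, but Steps 1 and 2 have genuine gaps.

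The blow-up at a maximum point cannot produce a flat limit. Take $r_k=e^{-\max f(t_k)/2}$ and $\tilde f_k=f\circ\exp_{x_k}(r_k\,\cdot)-\max f(t_k)$. The rescaled conformal metric is then exactly the pullback of $\omega_f$. Its curvature norm in $L^p(d\mu_f)$ and its volume are unchanged, not scaled to zero. Subcritical scaling helps only for norms in the fixed background, such as $\|Lf\|_{L^p(d\mu)}$, and your proposal gives no control of those before an upper bound on $f$ is known. So ``it is flat, and a Liouville argument excludes it'' has no justification.

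The paper needs no blow-up. The conformally invariant identity $\|R-nLf\|_{L^n(d\mu)}=\|R_f\|_{L^n(d\mu_f)}\le\|R_f\|_{L^p(d\mu_f)}$ bounds $\|Lf\|_{L^n}$ in the fixed background. The Green kernel then gives exponential integrability of $f-\int_Mf\,d\mu$. Jensen gives $\int_Mf\,d\mu\le0$, so $\int_Me^{(p+n)f}d\mu\le C$. H\"older's inequality with exponents $2p/(p+n)$ and $2p/(p-n)$ transfers the hypothesis to $\|Lf\|_{L^{(p+n)/2}(d\mu)}\le C$, and $(p+n)/2>n$. This yields two-sided $C^0$ and uniform $C^\alpha$ bounds at once (Proposition~\ref{prop:elliptic}), with the mean fixed by normalization.

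Once an upper bound is available, your elliptic argument for the lower bound is fine. However, the negative-moment and inverse-factor estimates of Section~\ref{sec:continuation} require control of $\|R_f\|_\infty$, which you do not have. Also, $W^{2,q}$ with $n<q\le2n$ gives only $C^\alpha$, not $C^{1,\alpha}$.

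\textbf{Step 2.} This step is circular. Proposition~\ref{weighted}, Lemma~\ref{negativemoments} and Theorem~\ref{thm:continuation} all assume $\int\|R_f\|_\infty^2\,dt<\infty$, which is what you want to derive from them. A two-sided bound on $f$ also does not put a fourth-order quasilinear equation, with nonlinearities in derivatives up to order three, within reach of standard smoothing from $C^0$ or $C^\alpha$ information.

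The missing ingredient is Proposition~\ref{prop:local}. It gives a common existence time $cr^4$ depending only on the amplitude bound $B$ and a scale $r$ on which the oscillation is at most $\delta$. It also gives $\|\nabla^kf\|_\infty\le C_kr^{-k}$ on the terminal half, independent of derivatives of the data. The uniform H\"older bound lets one fixed $r$ serve every time slice. Restarting then gives $T_{\max}=\infty$ and uniform $C^k$ bounds for $t\ge cr^4$.

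Only after this does your Step 3 apply. You should also verify, as the paper does, that the trajectory stays in the \L{}ojasiewicz neighborhood. The paper does this using compactness of the tail in $C^{4,\alpha}$ and an exit argument in an $L^2$ ball.
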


Since the evolving volume is one, a uniform pointwise curvature bound
implies the hypothesis of Theorem~\ref{thm:main}. The limiting equation
is the Euler--Lagrange equation for the squared functional, rather than
the constant Chern scalar curvature equation.

We next give conditions on the initial metric. In the remaining
statements, $f_0$ is smooth and real, and
$\int_Me^{nf_0}\dd\mu=1$. The norms use the indicated evolving metric
and volume form. For real $(1,1)$-forms, we use the pointwise Hermitian
Hilbert--Schmidt norm. The first Chern--Ricci form is
$$
\Ric^{\Ch}(\omega)=-\sqrt{-1}\partial\bar\partial\log\det(g_{k\bar l}).
$$

\begin{theorem}\label{thm:flat-main}
Suppose $R=0$, and fix $p\in(n,\infty)$. There are
$\delta_p,C_p,\beta>0$ and, for every integer $k\ge0$, $C_{p,k}>0$
such that
$$
\|R_{f_0}\|_{L^p(d\mu_{f_0})}\le\delta_p
$$
implies a unique global smooth solution of \eqref{eq:flow} satisfying
\begin{align*}
\sup_{t\ge0}\|f(t)\|_\infty
&\le C_p\|R_{f_0}\|_{L^p(d\mu_{f_0})},\\
\|f(t)\|_{C^k(\omega)}
&\le C_{p,k}\|R_{f_0}\|_{L^p(d\mu_{f_0})}e^{-\beta t}
\qquad(t\ge1).
\end{align*}
The constants $\delta_p,C_p$ depend only on $\omega,p$, and $C_{p,k}$
also depends on $k$. The rate $\beta$ may be chosen depending only on
$\omega$. The threshold and rate are independent of the initial higher
derivatives and of $k$.
\end{theorem}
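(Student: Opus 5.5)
The plan is a bootstrap/continuity argument around the background $f\equiv 0$, where $R=0$ and $E=0$. In that regime the flow \eqref{eq:flow} is a small perturbation of the linear fourth-order equation
$$
f_t=-2n^2L^*Lf .
$$
The operator $A=L^*L$ is self-adjoint and nonnegative in $L^2(d\mu)$. Its kernel is $\{Lf=0\}$, which by the maximum principle for $L$ consists exactly of the constants. The volume constraint $\int e^{nf}d\mu=1$ kills the constant mode to first order: a constant $c$ must satisfy $e^{nc}=1$, so $c=0$. We therefore work with the mean $\bar f=\int f\,d\mu$ and the oscillation $f-\bar f$, and note that $\bar f=O(\|f-\bar f\|_\infty^2)$. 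The rate $\beta$ is taken to be a fixed fraction of $2n^2\lambda_1$, where $\lambda_1$ is the first nonzero eigenvalue of $A$. This choice depends only on $\omega$.

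\textbf{Step 1: $L^\infty$ control from the initial data.} At $t=0$ we have $R_{f_0}=-ne^{-f_0}Lf_0$, so $Lf_0=-\tfrac1n e^{f_0}R_{f_0}$. This is a second-order elliptic equation for $f_0$ whose right-hand side lies in $L^p$ with $p>n$, i.e.\ above half the real dimension $2n$. Moser iteration or Alexandrov--Bakelman--Pucci type estimates then give
$$
\operatorname{osc} f_0\le C\|e^{f_0}R_{f_0}\|_{L^p(d\mu)}.
$$
The weights $e^{f_0}$ and $e^{nf_0}$ relating $d\mu$ and $d\mu_{f_0}$ must be absorbed. I expect a continuity trick to handle this (deform $R_{f_0}\to sR_{f_0}$, or argue directly using smallness), giving $\|f_0\|_\infty\le C_p\|R_{f_0}\|_{L^p(d\mu_{f_0})}$ together with a $W^{2,p}$ bound and hence a $C^{1,\alpha}$ bound. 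The data are only $W^{2,p}$-small. I will therefore use local existence and smoothing for the quasilinear fourth-order parabolic equation: short-time existence with a time interval and smoothing estimates $t^{k/4}\|f\|_{C^k}\lesssim \|f_0\|_{W^{2,p}}$ that depend only on the $W^{2,p}$ norm. This is what makes the threshold independent of higher derivatives.

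\textbf{Step 2: global bootstrap.} Let $T^*$ be the supremal time on which
$$
\|f(t)\|_{\infty}+\|R_f(t)\|_{L^p(d\mu_f)}\le 2K\|R_{f_0}\|_{L^p}.
$$
On this interval, Theorem~\ref{thm:main}'s hypothesis holds, so the solution is smooth as long as the bound persists; Theorem~\ref{thm:continuation} also applies. For decay I would use the energy $E(f)=\int R_f^2\,d\mu_f$, which is nonincreasing along the flow, with $E'=-\|f_t\|^2_{L^2(d\mu_f)}$. Linearizing,
$$
\|f_t\|^2\ge c\lambda_1 E-C\,\varepsilon E,
$$
which is a Łojasiewicz inequality with exponent $1/2$ coming from the spectral gap. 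This yields $E(t)\le E(1)e^{-2\beta t}$. Combined with parabolic smoothing on unit time intervals (local regularity for $f$ small in $W^{2,p}$), $E$ controls $\|f(t)-\bar f\|_{C^k}$, and $\bar f$ is quadratically small. Integrating $\|f_t\|\lesssim\sqrt{-E'}$ bounds the total displacement by $C\sqrt{E(0)}$. For $t\le1$ the bootstrap bound itself comes from the short-time estimates. Thus the bootstrap bounds improve to $K\|R_{f_0}\|$, which gives $T^*=\infty$ and the stated $C^k$ decay. Uniqueness follows from the standard contraction argument for smooth solutions.

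\textbf{Main obstacle.} The crux is Step 1 together with the short-time smoothing at merely $W^{2,p}$ regularity. The equation is fourth order and quasilinear with the weight $e^{(n-2)f}$, and we need existence time and smoothing constants depending only on $\|f_0\|_{W^{2,p}}$ with $p>n$, which is borderline-subcritical in dimension $2n$. I would first rewrite the flow in terms of $u=R_f$-weighted quantities: the curvature evolves by a fourth-order equation $u_t=-2n^2e^{-2f}L^*L\,u+\dots$, and one can run $L^p$ energy estimates for $u$ directly. Since $p>n$, Sobolev embedding controls the nonlinear terms.
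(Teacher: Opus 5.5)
\textbf{Gap in Step 1.} Step~1 has a genuine gap, and it is the step the theorem rests on. Until you have an upper bound for $f_0$, smallness of $\|R_{f_0}\|_{L^p(d\mu_{f_0})}$ gives no control of $\|e^{f_0}R_{f_0}\|_{L^p(d\mu)}$. Indeed $|e^{f_0}R_{f_0}|^p\,d\mu=|R_{f_0}|^pe^{(p-n)f_0}\,d\mu_{f_0}$ with $p>n$, and the volume constraint allows $f_0$ to be large on small sets.

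The continuity trick you propose has nothing to run on. $f_0$ is a single function, and deforming $R_{f_0}$ to $sR_{f_0}$ would require solving a nonlinear prescribed-curvature problem. Along the natural path $sf_0$ (renormalized) you also cannot check that the hypothesis persists, since $\|R_{f^s}\|^p_{L^p(d\mu_{f^s})}=n^ps^p\int_M|Lf_0|^pe^{(n-p)f^s}\,d\mu$ again needs a bound on $f_0$.

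The missing idea is that the exponent $n$ is conformally invariant. Since $R=0$, one has $|R_f|^n\,d\mu_f=n^n|Lf|^n\,d\mu$ pointwise. Hence $n\|Lf_0\|_{L^n(d\mu)}=\|R_{f_0}\|_{L^n(d\mu_{f_0})}\le\|R_{f_0}\|_{L^p(d\mu_{f_0})}$, with no weight at all.
\begin{itemize}
\item Because $L^n$ is borderline in real dimension $2n$, this only gives exponential integrability. The kernel bound $|G|\le Cd^{2-2n}$ gives $\|f_0-\bar f_0\|_{L^r}\le Cr^{1-1/n}$. Summing the exponential series and using $\bar f_0\le0$ then gives $\int_Me^{(p+n)f_0}\,d\mu\le C$.
\item That is exactly what pays for the weight via H\"older: $\|e^{f_0}R_{f_0}\|_{L^{(p+n)/2}(d\mu)}\le C_p\|R_{f_0}\|_{L^p(d\mu_{f_0})}$.
\item Since $(p+n)/2>n$, this bounds $\operatorname{osc}f_0$, and the constraint then controls the mean. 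This is the paper's Proposition~\ref{prop:scalar-critical}.
\end{itemize}
Note also that $W^{2,p}\subset C^{1,\alpha}$ requires $p>2n$, not $p>n$.

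\textbf{The local theory and the decay step.} Your other unproved ingredient is a $W^{2,p}$ local theory with smoothing that also propagates $\|R_f\|_{L^p(d\mu_f)}$ linearly. Once $\|f_0\|_\infty$ is small, this is more than you need. The paper gets existence and smoothing from small $C^0$ data alone, using Koch--Lamm-type spaces: Proposition~\ref{prop:small-local} gives a common time $h_0$ with $\|f(h_0)\|_{C^k}\le C_k\|f_0\|_\infty$. So only $\|f\|_\infty$ needs bootstrapping.

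Your decay mechanism is a genuine alternative to the paper's. The paper runs an $H^s$ energy estimate, $s=4m>n+4$, for the mean-zero part against the spectral gap of $2n^2L^*L$ (Lemma~\ref{lem:abstract-stability}). The decaying norm then already dominates $L^\infty$, and restarts give $C^k$ decay at a $k$-independent rate. Your exponent-$1/2$ {\L}ojasiewicz inequality for $E$ is correct once $f$ is $C^2$-small, and it has the nice feature $E(0)\le\|R_{f_0}\|_{L^p(d\mu_{f_0})}^2$.

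However, $E$ controls only $\|f-\bar f\|_{H^2}$, which in real dimension $2n\ge6$ bounds neither $W^{2,p}$ nor $L^\infty$. So the $W^{2,p}$ smoothing you invoke cannot convert decay of $E$ into $C^k$ decay. Interpolation against uniform $C^K$ bounds would make the rate depend on $k$. What you need instead is a unit-time smoothing estimate that is linear in an $L^2$-type norm. For example, you can treat the equation for $f-\bar f$ as linear, with coefficients bounded by the a priori $C^K$ bounds.
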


\begin{corollary}\label{cor:ricci-initial}
Suppose either $\Ric^{\Ch}(\omega)=0$, or that $\omega$ is K\"ahler
and $\Ric^{\Ch}(\omega)=\kappa\omega$ for a constant $\kappa>0$.
Fix $p\in(n,\infty)$. There are $\delta_p,C_p,\beta>0$ and, for every
integer $k\ge0$, $C_{p,k}>0$, with the dependencies in Theorem~\ref{thm:flat-main},
such that
$$
\left\|\Ric^{\Ch}(\omega_{f_0})-\frac{R_{f_0}}n\omega_{f_0}
\right\|_{L^p(\omega_{f_0},d\mu_{f_0})}\le\delta_p
$$
implies a unique global smooth solution of \eqref{eq:flow}, with
\begin{align*}
\sup_{t\ge0}\|f(t)\|_\infty
&\le C_p\left\|\Ric^{\Ch}(\omega_{f_0})
-\frac{R_{f_0}}n\omega_{f_0}\right\|_{L^p(\omega_{f_0},d\mu_{f_0})},\\
\|f(t)\|_{C^k(\omega)}
&\le C_{p,k}\left\|\Ric^{\Ch}(\omega_{f_0})
-\frac{R_{f_0}}n\omega_{f_0}\right\|_{L^p(\omega_{f_0},d\mu_{f_0})}
e^{-\beta t}\quad(t\ge1).
\end{align*}
\end{corollary}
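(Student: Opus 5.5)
The plan is to reduce Corollary~\ref{cor:ricci-initial} to the mechanism behind Theorem~\ref{thm:flat-main}. The trace-free Chern--Ricci condition should control $R_{f_0}$, or in the positive Kähler--Einstein case its deviation from a constant, in $L^p$, and also control $f_0$ in $L^\infty$.

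The conformal change gives $\Ric^{\Ch}(\omega_f)=\Ric^{\Ch}(\omega)-n\sqrt{-1}\partial\bar\partial f$. Write $\rho_0:=\Ric^{\Ch}(\omega_{f_0})-\frac{R_{f_0}}n\omega_{f_0}$.

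\emph{Case $\Ric^{\Ch}(\omega)=0$.} Here $R=0$, so $\rho_0=-n\bigl(\sqrt{-1}\partial\bar\partial f_0-\frac{Lf_0}{n}\omega\bigr)$. This is the trace-free part of the complex Hessian, measured in $\omega_{f_0}$; the Hilbert--Schmidt norm is conformally invariant up to $e^{-f_0}$.

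The key step is an a priori estimate: the trace-free complex Hessian, together with the volume normalization, controls $f_0$. Take $\bar\partial$-divergence, or equivalently apply $\partial^*$ with respect to $\omega$. The identity $\partial^*(\sqrt{-1}\partial\bar\partial f)=\bar\partial(\Box f)$ modulo torsion terms shows that $\bar\partial Lf_0$ is controlled by $\partial^*\rho_0$ plus lower-order torsion terms in $\partial\bar\partial f_0$. Hence $(1-\tfrac1n)\bar\partial Lf_0$ equals a divergence of $\rho_0$ plus terms linear in $\partial\bar\partial f_0$.

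Since $n\ge 3$, the coefficient $1-\frac1n$ is nonzero. Elliptic $L^p$ theory for the resulting overdetermined system then gives
\[
\|\partial\bar\partial f_0-c\,\omega\|_{L^p}\lesssim\|\rho_0\|_{L^p}+\text{lower order},
\]
where $c$ is a constant. The lower-order terms are absorbed by a compactness and contradiction argument, using that the kernel of the trace-free Hessian operator on a compact manifold with $\Ric^{\Ch}=0$ consists of constants. Integrating $Lf_0$ against the Gauduchon-type measure shows $c=0$. Then $p>n$ and Morrey embedding give $\operatorname{osc} f_0\lesssim\|\rho_0\|_{L^p}$, and the volume constraint fixes the constant. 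This bounds $\|f_0\|_\infty$ and therefore $\|R_{f_0}\|_{L^p(d\mu_{f_0})}=\|e^{-f_0}nLf_0\|_{L^p}\lesssim\delta_p$.

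A complication is that the weights $e^{-f_0}$ and $e^{nf_0}$ appear in converting between norms. I would run the estimate under a bootstrap assumption $\|f_0\|_\infty\le1$, closed by a continuity argument along $sf_0$. If a direct nonlinear bootstrap fails, I would instead set up a smallness argument using the quantitative rigidity of the trace-free Hessian equation. Once the bound holds, Theorem~\ref{thm:flat-main} applies verbatim and gives the stated estimates, with constants depending only on $\omega,p,k$.

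\emph{Case $\omega$ Kähler with $\Ric^{\Ch}(\omega)=\kappa\omega$, $\kappa>0$.} Here $R=n\kappa\neq0$, so Theorem~\ref{thm:flat-main} does not apply literally. I expect this case to be the main obstacle. The first step is the analogue of Obata's theorem for the operator $f\mapsto(\sqrt{-1}\partial\bar\partial f)_0$. On a Kähler--Einstein manifold, functions with vanishing trace-free Hessian have holomorphic gradient, and these are first eigenfunctions (Matsushima). So the static estimate gives $f_0$ close to this finite-dimensional family, modulo the volume constant, not close to $0$.

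The second step handles this family. The nearby metrics $\omega_f$ are generally not critical, and I would treat the finite-dimensional directions by a modulated/center-manifold analysis. Linearizing \eqref{eq:flow} at $f=0$, $R=n\kappa$, gives the operator $-2n^2L^2f+(\text{lower order in }\kappa)f$. Modulo constants, I would check that the first eigenspace directions are strictly damped because $n\ge3$ and $\kappa>0$, so there is a spectral gap $\beta>0$ on the whole volume-normalized tangent space. This requires computing the quadratic form of $E$ at the KE metric on $\lambda_1=\kappa$ eigenfunctions; the factor $(n-2)$ should make them unstable for $E$ restricted to conformal factors, or stable — this sign computation is where I would begin.

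With a spectral gap in hand, I would repeat the proof of Theorem~\ref{thm:flat-main}: a small-data $L^p$-to-smooth parabolic smoothing for $t\in[0,1]$, followed by exponential decay from the gap. This proves the corollary in the second case.
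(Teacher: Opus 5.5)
There are two genuine gaps. The first is the ``complication'' you flag but do not resolve. The hypothesis is a weighted norm, $\|\rho_0\|^p_{L^p(\omega_{f_0},d\mu_{f_0})}=\int_Me^{-(p-n)f_0}|\rho_0|_\omega^p\dd\mu$. Its weight degenerates exactly where $f_0$ is large, so without an a priori upper bound on $f_0$ it gives no fixed-background $L^p$ bound for the trace-free Hessian, and your fixed-metric elliptic and Morrey estimates cannot start. The continuity argument along $sf_0$ does not supply that bound. Smallness is known only at $s=1$, and the weighted norm at intermediate $s$ is not controlled by it. The bootstrap hypothesis $\|f_s\|_\infty\le1$ at small $s$ gives only $\operatorname{osc}f_0\le2/s$, a weight loss of order $e^{C/s}$ that no fixed $\delta_p$ absorbs. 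The missing idea is to pass through the critical exponent. For every real $(1,1)$-form, $|T|_{\omega_f}^n\dd\mu_f=|T|_\omega^n\dd\mu$, so unit volume and H\"older turn the hypothesis into an \emph{unweighted} fixed $L^n$ bound on $\sqrt{-1}\partial\bar\partial f_0-\frac{Lf_0}{n}\omega$. Lemma~\ref{lem:hessian} then gives $\|Lf_0\|_{L^n}\le C$, and Lemma~\ref{lem:exponential} gives $\int_Me^{(p+n)f_0}\dd\mu\le C$. Next, \eqref{eq:holder-transfer} gives a fixed $L^{(p+n)/2}$ bound that is linear in $\|\rho_0\|_{L^p(\omega_{f_0},d\mu_{f_0})}$. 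Since $(p+n)/2>n$, \eqref{eq:hessian-coercivity}, Sobolev embedding and \eqref{eq:mean-control} yield $\|f_0\|_\infty\le C_p\|\rho_0\|_{L^p(\omega_{f_0},d\mu_{f_0})}$. This is Proposition~\ref{prop:tensor-critical}. It needs no bootstrap and uses only that the background trace-free Chern--Ricci form vanishes, so it handles both cases at once. With it, your reduction of the first case to Theorem~\ref{thm:flat-main}, or directly to Proposition~\ref{prop:stability}, goes through.

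The second gap is in the K\"ahler--Einstein case, which starts from a false claim. Matsushima's holomorphic-gradient functions are characterized by vanishing $(2,0)$-Hessian $f_{ij}=0$, not by vanishing trace-free $(1,1)$-Hessian. If $\sqrt{-1}\partial\bar\partial f=\frac{Lf}{n}\omega$ on a K\"ahler manifold, applying $d$ gives $d(Lf)\wedge\omega=0$, hence $d(Lf)=0$. So $Lf$ is a constant with zero mean, and $f$ is constant. Lemma~\ref{lem:hessian} proves the same on every compact connected Hermitian manifold; this is also the unproved kernel statement your first case relies on. Thus $\|f_0\|_\infty$ is small exactly as in the first case, and no modulated or center-manifold analysis is needed. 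What is needed is the sign computation you leave open, for every mean-zero mode and not only the first eigenspace. On $-Lh=\lambda h$ the quadratic form in \eqref{eq:gap} equals $2\bigl[n^2\lambda^2+2n(n-2)R\lambda-(n-2)R^2\bigr]\|h\|_2^2$, which increases in $\lambda>0$. The K\"ahler Bochner identity with $\Ric^{\Ch}(\omega)=\kappa\omega$ gives $\lambda\ge\kappa=R/n$ for every nonzero eigenvalue, and at $\lambda=R/n$ the bracket equals $(n-1)R^2>0$. Hence \eqref{eq:gap} holds. The Gauduchon constant-curvature case of Proposition~\ref{prop:stability}, fed by the $C^0$ bound above, then gives the conclusion. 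This computation is not a formality. Proposition~\ref{prop:nonattraction} shows the same bracket is negative on positive constant-curvature K\"ahler backgrounds with small eigenvalues, so the Einstein lower bound $\lambda\ge R/n$ is exactly the ingredient your proposal lacks.
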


For comparison, Angella, Calamai and Spotti establish existence and
uniqueness up to scaling of constant Chern scalar curvature metrics in
conformal classes of nonpositive Gauduchon degree~\cite{ACS17}, and
study first-Chern--Einstein Hermitian metrics~\cite{ACS20}.
Theorem~\ref{thm:flat-main} and Corollary~\ref{cor:ricci-initial}
give dynamical convergence to the specified background from initial
curvature conditions.

Constant Chern scalar curvature alone does not ensure attraction to the
background. In every complex dimension $n\ge3$, there are unit-volume
K\"ahler backgrounds of positive constant Chern scalar curvature, and also
of negative constant Chern scalar curvature, with smooth normalized
perturbations tending to zero in $C^\infty$ whose solutions cannot converge
back to the background in $C^0$ if they exist for all time
(Proposition~\ref{prop:nonattraction}). These examples have a negative
direction for the constrained second variation. They explain the role of
the positivity assumption in Proposition~\ref{prop:stability}.

A natural question is if there is a finite blow up result for the flow \eqref{eq:flow}. Actually, in another work \cite{CMS26}, we can show the below.
Fix a connected flat complex three-torus $(M,\omega)$ with $\int_Md\mu=1$. In this case, the conformal Chern-Calabi flow \eqref{eq:flow} is
$$
 f_t=-18e^{-3f}L(e^fLf)-9e^{-2f}(Lf)^2
       +9\int_Me^f(Lf)^2d\mu,\qquad \int_Me^{3f}d\mu=1.
$$
Then for every $p\in M$, there exists a real $f_0\in C^\infty(M)$ with $\int_Me^{3f_0}d\mu=1$ such that the maximal positive smooth solution of \eqref{eq:flow} has existence time $0<T_{\max}<\infty$ and
$$
\lim_{t\uparrow T_{\max}}e^{f(t,p)}=0.
$$
In particular it cannot extend across $T_{\max}$ as a positive smooth Hermitian metric. 

Curvature continuation criteria are also known for Calabi flows in
other spaces of metrics. Chen and He prove continuation of the
K\"ahler potential Calabi flow under a uniform Ricci curvature
bound~\cite{ChenHe08}. Shen considers a Mabuchi gradient flow in a
$\partial\bar\partial$-class on Hermitian manifolds with
$c_1^{BC}(M)=0$ and
$\partial\bar\partial\omega=\partial\bar\partial\omega^2=0$.
For that flow, bounded Chern scalar curvature gives continuation, and
a uniform bound for all time gives convergence to the Chern--Ricci-flat
metric in that class~\cite[Theorem~1]{Shen22}. Equation~\eqref{eq:flow}
evolves conformal metrics and is the gradient flow of the
squared curvature functional.

For the main estimates we use the fixed background operator. In the
continuation argument, using rescaling and elliptic estimates outside a set of
times of vanishing measure, we give the weighted derivative bounds.
For convergence, the identity
$|R_f|^n\dd\mu_f=|R-nLf|^n\dd\mu$ gives exponential integrability,
and the $L^p$ bound with $p>n$ gives a uniform H\"older estimate. A common local lifetime and smoothing estimates establish
compactness before the analytic gradient inequality of Feehan and
Maridakis~\cite[Theorem~3]{FM20} is applied. The initial curvature
conditions give a small supremum norm for $f_0$, which enters a local
stability argument. The local construction uses biharmonic estimates
of the type appearing in Koch and Lamm~\cite{KL12}. 

The rest is organized as follows. In Section \ref{sec:variation} we make useful variational identities and elliptic estimates. Local existence result of the flow is stated in Section \ref{sec:local} and proved in  Appendix~\ref{app:local}.
We prove Theorem \ref{thm:continuation} in Section \ref{sec:continuation}.  In Section \ref{sec:convergence}, we prove Theorem \ref{thm:main}. We prove Theorem \ref{thm:flat-main} and Corollary \ref{cor:ricci-initial} in Section \ref{sec:initial}. We prove the required compact-background estimates in Appendix~\ref{app:local}.

\section{Variational identities and elliptic estimates}\label{sec:variation}

Unless otherwise indicated, $C$ denotes a positive constant depending only on the fixed smooth background, and on explicitly fixed exponents or bounds. Its value may change between occurrences. Sobolev and $C^k$ norms without a moving metric are taken with respect to the fixed background. All functions in the static assertions are real and smooth. Let $\gamma$
be the Riemannian metric associated with $\omega$, so that $dV_\gamma=d\mu$
and the principal part of $L$ is $\frac12\Delta_\gamma$. Unmarked distances and covariant derivatives use $\gamma$.
The divergence $\divmu$ and formal adjoints use $d\mu$. On real functions,
$L=\tfrac12\Delta_\gamma+b\cdot\nabla$ for a fixed smooth vector field $b$.
Thus $L\Psi(h)=\Psi'(h)Lh+\tfrac12\Psi''(h)|\nabla h|_\gamma^2$.

\subsection{Variation and the volume density}

Differentiation of~\eqref{eq:energy} gives
$$
DE(f)[h]=(n-2)\int_Mh e^{(n-2)f}(R-nLf)^2d\mu
-2n\int_M e^{(n-2)f}(R-nLf)Lh\,d\mu.
$$
Thus its unrestricted gradient for $\langle\cdot,\cdot\rangle_f$ is
$$
(n-2)e^{-2f}(R-nLf)^2
-2ne^{-nf}L^*\!\left(e^{(n-2)f}(R-nLf)\right).
$$
Since $L1=0$, the weighted mean of this gradient on the volume constraint is $(n-2)E(f)$. Tangent variations have zero $d\mu_f$ mean. Subtracting the gradient's mean and reversing its sign proves~\eqref{eq:flow}. Integration of that equation gives
$$
\frac{d}{dt}\int_Me^{nf}\dd\mu
=n(n-2)E(f)\left(\int_Me^{nf}\dd\mu-1\right).
$$
For a normalized initial value, uniqueness for this scalar ordinary
differential equation preserves the volume constraint. Substitution of
$h=f_t$ into the variation formula then gives
\begin{equation}\label{eq:dissipation}
\frac{d}{dt}\int_Me^{nf}d\mu=0,\qquad
\frac{d}{dt}E(f(t))=-\int_M|f_t|^2d\mu_f.
\end{equation}
We write $\grad E(f)$ for the constrained gradient, so that
$f_t=-\grad E(f)$ along the flow.

\begin{lemma}\label{lem:density}
Let $f$ be a smooth normalized solution of \eqref{eq:flow} on $[0,T)$,
where $T<\infty$. Then $e^{nf(t)}$ has a strong $L^1(d\mu)$ limit of
mass one as $t\uparrow T$. The densities are uniformly integrable on
every interval $[t_0,T)$ with $0\le t_0<T$.
\end{lemma}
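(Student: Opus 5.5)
We prove the lemma from the energy dissipation identity \eqref{eq:dissipation}. Write $\rho(t)=e^{nf(t)}$, so that $\rho(t)\,d\mu=d\mu_{f(t)}$ and $\int_M\rho(t)\dd\mu=1$ for every $t$. The plan has three steps: a time-derivative formula for $\rho$, a bound showing $\rho(t)$ is Cauchy in $L^1(d\mu)$, and uniform integrability. Since $\partial_t\rho=nf_t\rho$, integrating \eqref{eq:dissipation} over $[0,t)$ gives
\[
\int_0^T\!\!\int_M|f_t|^2\rho\dd\mu\dd t\le E(f_0).
\]

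\textbf{Cauchy property.} For $s<t<T$, Cauchy--Schwarz gives
\[
\|\rho(t)-\rho(s)\|_{L^1(d\mu)}\le n\int_s^t\!\!\int_M|f_t|\rho\dd\mu\dd\tau
\le n\int_s^t\Bigl(\int_M|f_t|^2\rho\dd\mu\Bigr)^{1/2}\Bigl(\int_M\rho\dd\mu\Bigr)^{1/2}d\tau .
\]
The last factor equals one. A second Cauchy--Schwarz in time then yields
\[
\|\rho(t)-\rho(s)\|_{L^1(d\mu)}\le n\,(t-s)^{1/2}E(f_0)^{1/2}.
\]
This step uses $T<\infty$ only to make the bound uniform; the estimate $n(t-s)^{1/2}E(f_0)^{1/2}$ tends to zero as $s,t\uparrow T$. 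Hence $\rho(t)$ is Cauchy in $L^1(d\mu)$ and converges strongly to some $\rho_T\ge0$. Strong convergence preserves the integral, so $\rho_T$ has mass one.

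\textbf{Uniform integrability.} Fix $t_0<T$. The map $t\mapsto\rho(t)$ is continuous from $[t_0,T)$ into $L^1$, by smoothness on compact subintervals together with the estimate above. It extends continuously to $[t_0,T]$ by setting $\rho(T)=\rho_T$. The image of the compact interval $[t_0,T]$ is therefore compact in $L^1(d\mu)$. A compact subset of $L^1$ on a finite measure space is uniformly integrable: approximate it by a finite $\varepsilon$-net, each member of which is integrable, and apply the Dunford--Pettis characterization, or argue directly with absolute continuity of the integral.

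\textbf{Main obstacle.} The only delicate point is the Cauchy step. One must not use the fixed-measure norm $\int_M|f_t|^2\dd\mu$, which is not controlled by the dissipation. The weight $\rho$ has to be split symmetrically as $|f_t|\rho=(|f_t|\rho^{1/2})\rho^{1/2}$, so that one factor is exactly the dissipated quantity and the other is the conserved volume. No curvature hypothesis is needed for this lemma.
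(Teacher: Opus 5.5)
Your proof is correct and follows essentially the same route as the paper. The paper phrases the Cauchy estimate through $z=e^{nf/2}$, using $\|z_t\|_2^2=-\tfrac{n^2}{4}E'(t)$ and the bound of the $L^1$ distance of densities by twice the $L^2$ distance of their square roots, to get $n\sqrt{t-s}\sqrt{E(f(s))-E(f(t))}$ where you have the weaker but sufficient $E(f_0)$. It then proves uniform integrability by the same compactness of the curve with its endpoint attached.
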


\begin{proof}
Normalization and \eqref{eq:dissipation} give
$\|e^{nf(t)/2}\|_2=1$ and
$\|\partial_te^{nf/2}\|_2^2=-\tfrac{n^2}{4}E'(t)$. For $t_0\le s<t<T$,
the time Cauchy--Schwarz inequality therefore gives
\begin{equation}\label{eq:density-cauchy}
\|e^{nf(t)}-e^{nf(s)}\|_1
\le n\sqrt{t-s}\sqrt{E(f(s))-E(f(t))}.
\end{equation}
Here we used that the $L^1$ distance of the densities is at most twice
the $L^2$ distance of their square roots. Since the energy is decreasing
and nonnegative, \eqref{eq:density-cauchy} proves convergence in $L^1$;
the limiting mass is one. The curve with its endpoint attached is compact
in $L^1$. Cover this compact set by finitely many $L^1$ balls of a given
small radius and apply absolute continuity of the integral to their
centers. This proves uniform integrability.
\end{proof}

If $\omega$ is Gauduchon, then $L^*1=0$. In fact
$$
\int_M Lh\,d\mu
=\frac1{(n-1)!}\int_M\sqrt{-1}\partial\bar\partial h\wedge\omega^{n-1},
$$
and two integrations by parts identify vanishing for every $h$ with $\partial\bar\partial\omega^{n-1}=0$. This condition does not require $L$ to be self-adjoint.

\subsection{Fixed elliptic estimates}

\begin{lemma}\label{lem:hessian}
On a fixed compact connected Hermitian manifold of complex dimension $n\ge2$ and volume one, for every $1<p<\infty$,
\begin{equation}\label{eq:hessian-coercivity}
\left\|h-\int_Mh\,d\mu\right\|_{W^{2,p}}
\le C_p\left\|\sqrt{-1}\partial\bar\partial h-\frac{Lh}{n}\omega\right\|_{L^p(\omega,d\mu)}.
\end{equation}
There is a fixed scalar kernel $G(x,y)$ such that
\begin{align}
h(x)-\int_Mh\,d\mu
&=\int_MG(x,y)Lh(y)d\mu(y),\label{eq:scalar-kernel}\\
|G(x,y)|&\le C d_\omega(x,y)^{2-2n}\qquad(x\ne y).\label{eq:kernel-bound}
\end{align}
Also $\|h-\int_Mh\,d\mu\|_{W^{2,p}}\le C_p\|Lh\|_{L^p}$.
\end{lemma}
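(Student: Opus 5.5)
The lemma has three parts. I would prove them in the order: the $W^{2,p}$ estimate for $L$, then the kernel, then the Hessian coercivity \eqref{eq:hessian-coercivity}.

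\emph{The $L^p$ estimate for $L$.} On real functions, $L=\tfrac12\Delta_\gamma+b\cdot\nabla$ is a second order elliptic operator with smooth coefficients and no zeroth order term. By the strong maximum principle on the compact connected manifold $M$, its kernel consists of the constants. Its formal adjoint $L^*$ is also elliptic, with index zero, so $\ker L^*$ is one dimensional and spanned by a function $\phi$. By the Krein--Rutman/maximum principle argument, $\phi$ can be taken positive. Standard $L^p$ elliptic theory (Calder\'on--Zygmund) gives
\[
\|u\|_{W^{2,p}}\le C_p\bigl(\|Lu\|_{L^p}+\|u\|_{L^p}\bigr).
\]
A compactness (Peetre-type) argument then removes the lower order term on the closed subspace $\{\int_M u\,d\mu=0\}$. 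The key point is that this subspace meets $\ker L$ only in $0$. This gives the last assertion, applied to $u=h-\int_M h\,d\mu$.

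\emph{The kernel.} I would construct $G$ as follows. Define the operator $S\colon L^p\to W^{2,p}$ by sending $F$ to the unique mean-zero solution $u$ of $Lu=F-(\int_M F\phi\,d\mu)\,\phi^{-1}\ldots$. That normalization looks awkward, so instead I define $S$ as the bounded right inverse of $L$ on $\operatorname{Ran}L=\{F:\int_M F\phi\,d\mu=0\}$, taking mean-zero values, and extend it by zero on a complement. Then $u=h-\int_M h\,d\mu$ satisfies $u=S(Lh)$ because $Lh\in\operatorname{Ran}L$. The Schwartz kernel of $S$ is built from a local parametrix $E_0(x,y)\sim c\,d(x,y)^{2-2n}$ for $\tfrac12\Delta_\gamma$ in real dimension $2n$. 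The remainders are smoothing, since they add only weakly singular or bounded kernels, and the finite-rank corrections have smooth kernels. Together these give \eqref{eq:kernel-bound}. Here I use $2n\ge4>2$, so no logarithm appears.

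\emph{Hessian coercivity.} Set $u=h-\int_M h\,d\mu$ and $A=\sqrt{-1}\partial\bar\partial h-\frac{Lh}{n}\omega$. The operator
\[
P\colon u\mapsto \sqrt{-1}\partial\bar\partial u-\tfrac{Lu}{n}\omega,
\]
mapping functions to trace-free real $(1,1)$-forms, is overdetermined elliptic for $n\ge2$. Indeed, its symbol at $\xi$ is $u\mapsto \xi^{1,0}\wedge\xi^{0,1}$ minus its trace part, and the trace-free part of a rank-one positive form is nonzero when $n\ge2$. Hence $P^*P$ is a fourth order elliptic operator, and the elliptic estimate gives
\[
\|u\|_{W^{2,p}}\le C\bigl(\|Pu\|_{L^p}+\|u\|_{L^p}\bigr).
\]
As before, the lower order term is removed by compactness, provided $\ker P\cap\{\text{mean zero}\}=0$.

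This kernel statement is the main obstacle. Suppose $Pu=0$. Then $\sqrt{-1}\partial\bar\partial u=\frac{Lu}{n}\omega$. Wedging with $\omega^{n-1}$ is consistent but gives no information. Instead I would apply $\partial$. The identity $\partial(\sqrt{-1}\partial\bar\partial u)=0$ gives
\[
\partial\!\left(\frac{Lu}{n}\right)\wedge\omega+\frac{Lu}{n}\,\partial\omega=0.
\]
Write $v=Lu/n$. Since wedging a $1$-form with $\omega$ is injective for $n\ge2$, this yields $\partial v=-v\,\Lambda(\partial\omega)$ pointwise for some torsion $1$-form $\theta$. I would then argue via a maximum principle, which is simpler. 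At a maximum point of $u$ we have $\sqrt{-1}\partial\bar\partial u\le0$ as a form. So $v\le0$ there, and $\sqrt{-1}\partial\bar\partial u=v\omega$ has a sign pattern forcing $u$ to be subharmonic or superharmonic according to the sign of $v$. Concretely, on the open set $\{v>0\}$ the function $u$ is strictly plurisubharmonic and has no interior maximum. Since $v$ satisfies a first order linear equation, $v$ either vanishes identically or never vanishes. If $v\neq0$ everywhere, then $u$ is everywhere strictly psh or strictly plurisuperharmonic, which contradicts compactness at an extremum. So $v\equiv0$, meaning $u$ is pluriharmonic, hence constant, hence zero by the mean-zero condition. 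The step I expect to be most delicate is justifying the unique-continuation claim for $v$ when $\omega$ is non-K\"ahler. If it fails, I would fall back on Aronszajn unique continuation for the elliptic system $P^*Pu=0$.
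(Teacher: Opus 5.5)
Your proposal is correct and is essentially the paper's argument: $d(\sqrt{-1}\partial\bar\partial u)=0$ makes $Lu$ satisfy a first-order linear ODE with a fixed torsion (Lee) form, the signs of $Lu$ at a maximum and a minimum then force $Lu\equiv0$, and the estimates and the $d^{2-2n}$ kernel bound come from injective-symbol ellipticity and an order $-2$ inverse of $L$ modulo constants (the paper writes these inverses explicitly as $(P^*P+\Pi)^{-1}P^*$ and $(L^*L+\Pi)^{-1}L^*$, with $\Pi$ the projection onto constants, instead of removing lower-order terms by compactness). The step you flag as delicate is already settled by your own equation: for real $v$ it reads $dv=-v(\theta+\bar\theta)$ along any path, so no unique-continuation fallback is needed.
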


\begin{proof}
We first show that $\sqrt{-1}\partial\bar\partial h=(Lh/n)\omega$ forces $h$ to be constant. Exterior differentiation and wedging with $\omega^{n-2}$ give
$$
d(Lh)\wedge\omega^{n-1}
+\frac{Lh}{n-1}d(\omega^{n-1})=0.
$$
Let $\theta$ be the Lee form, defined by $d(\omega^{n-1})=\theta\wedge\omega^{n-1}$. Wedging a one-form with $\omega^{n-1}$ is a linear isomorphism, so
$$
d(Lh)=-\frac{Lh}{n-1}\theta.
$$
At a maximum of $h$, $Lh\le0$, and at a minimum, $Lh\ge0$. Connectedness therefore gives a zero of $Lh$. Along any smooth path from this zero the last equation is a homogeneous scalar ordinary differential equation. Its solution with zero initial value is zero. Hence $Lh=0$ everywhere, and the strong maximum principle makes $h$ constant.

For a nonzero real covector $\xi$, the complex Hessian has principal symbol $-\xi^{1,0}\otimes\overline{\xi^{1,0}}$. Its trace-free part has squared norm
$$
\left(1-\frac1n\right)|\xi^{1,0}|_\omega^4>0.
$$
Thus the operator in~\eqref{eq:hessian-coercivity} has injective principal symbol. Its formal adjoint composed with itself is a nonnegative self-adjoint strongly elliptic operator of order four, with kernel the constants. Adding the orthogonal projection onto constants makes it invertible. Composing the inverse on the right with the original second-order adjoint gives a left inverse of order $-2$, whose composition with the trace-free Hessian is exactly $h\mapsto h-\int_Mh\,d\mu$.

The fixed smooth elliptic calculus on a closed manifold gives $L^p\to W^{2,p}$ boundedness for this left inverse; see~\cite[Chapter~7]{T11}. This proves~\eqref{eq:hessian-coercivity}.

The maximum principle also gives $\ker L=\R$. Apply the same construction to $L^*L$ plus the constant projection and compose its inverse with $L^*$. The resulting operator of order $-2$ is a left inverse of $L$ modulo constants. Its kernel gives~\eqref{eq:scalar-kernel} and has the bound~\eqref{eq:kernel-bound}. The smooth off-diagonal part is absorbed into that bound on the compact manifold. Its $L^p\to W^{2,p}$ boundedness proves the last estimate.
\end{proof}

Volume normalization will repeatedly supply the mean of the conformal factor:
\begin{equation}\label{eq:mean-control}
\int_Mf\,d\mu
=-\frac1n\log\int_M\exp\!\left(n\left(f-\int_Mf\,d\mu\right)\right)d\mu,
\qquad
\left|\int_Mf\,d\mu\right|\le\left\|f-\int_Mf\,d\mu\right\|_\infty.
\end{equation}

\subsection{Curvature bounds for the conformal factor}

\begin{lemma}\label{lem:exponential}
Fix $B<\infty$. Every smooth real $f$ satisfying $\int_Me^{nf}\dd\mu=1$ and $\|Lf\|_{L^n(d\mu)}\le B$ obeys, for each fixed $a>0$,
\begin{equation}\label{eq:exponential}
\int_M\exp\left(a\left|f-\int_Mf\dd\mu\right|\right)\dd\mu\le C_{a,B},
\qquad
\int_Me^{af}\dd\mu\le C_{a,B}.
\end{equation}
The constants depend only on the fixed background, $a$ and $B$.
\end{lemma}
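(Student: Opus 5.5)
We represent the oscillation of $f$ through the scalar kernel of Lemma~\ref{lem:hessian} and then apply a Moser--Trudinger (Adams-type) inequality to that Riesz-type potential. Set $u=f-\int_Mf\dd\mu$. By \eqref{eq:scalar-kernel},
$$
u(x)=\int_MG(x,y)\,Lf(y)\dd\mu(y),
$$
and \eqref{eq:kernel-bound} gives
$$
|u(x)|\le C\int_Md_\omega(x,y)^{2-2n}|Lf(y)|\dd\mu(y).
$$
The real dimension is $2n$, so this kernel is the Riesz potential $I_2$ of order $\alpha=2$. The density $Lf$ lies in $L^{m/\alpha}=L^n$, which is exactly the borderline case.

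For the first bound in \eqref{eq:exponential}, write $g=|Lf|/\|Lf\|_{L^n}$ whenever $Lf\neq0$; if $Lf=0$ then $u=0$ and there is nothing to prove. Then $|u|\le CB\,I_2g$ with $\|g\|_{L^n}=1$. Adams' inequality on a compact $2n$-manifold says $\int\exp(\beta_0|I_2g|^{n/(n-1)})\le C$ for some fixed $\beta_0>0$, uniformly over $\|g\|_n\le1$. Young's inequality gives
$$
a|u|\le aCB|I_2g|\le \beta_0|I_2g|^{n/(n-1)}+C_{a,B},
$$
which yields the bound with a constant depending only on the background, $a$ and $B$.

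If we want to avoid citing Adams, there is a self-contained route. Split the kernel at radius $r$. The near part has $L^{n'}$-norm of the kernel over $B_r$ equal to $C(\log)$ only marginally, since $(2n-2)n'=2n$. Instead, the standard Hedberg/Trudinger argument estimates moments directly: for every $q\ge n'$, Hölder's inequality and the kernel bound give
$$
\|I_2g\|_{L^q}\le C\,q^{(n-1)/n}\|g\|_{L^n}.
$$
This follows from Young's inequality for convolution, because the kernel lies in $L^s$ for every $s<n'$ with norm growing like $(n'-s)^{-1/s'}$. Summing the power series of the exponential against these moment bounds gives exponential integrability for every $a$, since the growth $q^{(n-1)/n}$ is sublinear in $q$. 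This moment estimate is the main step. The point to check is that the exponent of $q$ is strictly less than $1$, which makes every $a>0$ admissible, not just small ones.

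For the second bound, we use \eqref{eq:mean-control}: $\bigl|\int_Mf\dd\mu\bigr|\le\|u\|_\infty$. That is not uniformly bounded, so we bound the mean differently. Jensen's inequality applied to the constraint gives $n\int_Mf\dd\mu\le\log\int_Me^{nf}\dd\mu=0$, so $\int_Mf\dd\mu\le0$. Hence
$$
\int_Me^{af}\dd\mu=e^{a\int_Mf\dd\mu}\int_Me^{au}\dd\mu\le\int_Me^{a|u|}\dd\mu\le C_{a,B}.
$$
Only the upper bound on the mean is needed, and it holds with no further input.
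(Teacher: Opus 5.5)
Your proposal is correct. Your ``self-contained'' moment route is essentially the paper's proof. The paper takes $r\ge n$, defines $s$ by $1/s=(n-1)/n+1/r$, bounds the $L^s$ norm of $d^{2-2n}$ by $Cr^{1-1/n}$, and applies Young's inequality in charts to get $\|f-\int_Mf\,d\mu\|_{L^r}\le CBr^{1-1/n}$. It then sums the exponential series using $j!\ge(j/e)^j$ and handles the mean by Jensen, exactly as you do.

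Your primary route through Adams' inequality differs only in importing exponential integrability of $I_2$ on $L^n$ as a black box, and it buys nothing here. The non-sharp statement $\int\exp(\beta_0|I_2g|^{n/(n-1)})\le C$, for some unspecified $\beta_0>0$, is equivalent up to constants to the moment bound $\|I_2g\|_{L^q}\le Cq^{(n-1)/n}\|g\|_{L^n}$. To see this, expand $\exp(\beta_0|v|^{n/(n-1)})$ in its power series. So the moment argument already proves exactly what you would cite. The sharp constant in Adams' theorem is irrelevant, since your Young-inequality absorption works for any $\beta_0>0$.

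Three details in the moment route need correcting, though none affects the conclusion.
\begin{itemize}
\item The kernel norm behaves like $\|d^{2-2n}\|_{L^s}\asymp(n'-s)^{-1/s}$, not $(n'-s)^{-1/s'}$. Since $n'-s\asymp1/q$, this gives $q^{1/s}=q^{(n-1)/n+1/q}\le Cq^{(n-1)/n}$, matching the bound you state. The exponent $1/s'$ would instead give $q^{1/n}$, which is inconsistent with your stated bound and in fact false for $n\ge3$.
\item Young's inequality needs $s\ge1$, i.e.\ $q\ge n$, not $q\ge n'$. The moments with $q<n$ follow from the $n$th moment by H\"older, since $\mu(M)=1$, as the paper remarks.
\item Drop the sentence claiming the near part has $L^{n'}$ norm ``$C(\log)$''. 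In fact $d^{2-2n}\notin L^{n'}$ at all, because its integral diverges logarithmically. That is precisely why one works with $s<n'$ and tracks the blow-up as $s\uparrow n'$.
\end{itemize}
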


\begin{proof}
For $r\ge n$, choose $s$ by $1/s=(n-1)/n+1/r$. On a fixed coordinate ball, the $L^s$ norm of $|x|^{2-2n}$ is at most $Cr^{1-1/n}$: the denominator of its radial integral is $2n-(2n-2)s$, comparable to $r^{-1}$, and $r^{1/s}\le Cr^{1-1/n}$. The kernel representation~\eqref{eq:scalar-kernel}, the bound~\eqref{eq:kernel-bound}, and Young's inequality in finitely many charts therefore give
$$
\left\|f-\int_Mf\dd\mu\right\|_{L^r}\le C B r^{1-1/n}.
$$
Expand the exponential in its nonnegative power series. For integers $j\ge n$, the preceding estimate and $j!\ge(j/e)^j$ bound its $j$th integral term by $(aCBe)^j j^{-j/n}$. This series converges. The finitely many lower moments follow from the $n$th moment, proving the first estimate. The volume constraint and Jensen's inequality give $\int_Mf\dd\mu\le0$, so the second follows from the first.
\end{proof}

We will use a direct change between the two measures. Fix $p>n$. For every nonnegative measurable $F$, H\"older's inequality with exponents $2p/(p+n)$ and $2p/(p-n)$ gives
\begin{equation}\label{eq:holder-transfer}
\|e^fF\|_{L^{(p+n)/2}(d\mu)}
\le \|F\|_{L^p(d\mu_f)}
\left(\int_Me^{(p+n)f}\dd\mu\right)^{(p-n)/(p(p+n))}.
\end{equation}
To see the exponent in the last factor, write
$$
(e^fF)^{(p+n)/2}
=(F^pe^{nf})^{(p+n)/(2p)}e^{(p+n)(p-n)f/(2p)}.
$$

\begin{proposition}\label{prop:elliptic}
Fix $p>n$ and $B<\infty$. For every
$0<\alpha<\min\{1,2-4n/(p+n)\}$, there is a constant $C$, depending only on the fixed background, $p,B$ and $\alpha$, such that every smooth real $f$ satisfying
$$
\int_Me^{nf}\dd\mu=1,\qquad \|R_f\|_{L^p(d\mu_f)}\le B
$$
obeys $\|f\|_{W^{2,(p+n)/2}}+\|f\|_{C^\alpha}\le C$.
\end{proposition}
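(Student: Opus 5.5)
We first obtain an $L^n(d\mu)$ bound on $Lf$, then bootstrap through Lemma~\ref{lem:exponential}, the transfer inequality \eqref{eq:holder-transfer} and Lemma~\ref{lem:hessian}.

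\emph{Step 1: an $L^n$ bound on $Lf$.} By \eqref{eq:curvature}, $R-nLf=e^fR_f$, so
$$
\int_M|R-nLf|^n\dd\mu=\int_M|R_f|^n\dd\mu_f\le\|R_f\|_{L^p(d\mu_f)}^n\le B^n,
$$
using H\"older with respect to the probability measure $d\mu_f$ and $p>n$. Since $R$ is a fixed smooth function, this gives $\|Lf\|_{L^n(d\mu)}\le C(B)$.

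\emph{Step 2: exponential integrability.} Lemma~\ref{lem:exponential} now gives $\int_Me^{af}\dd\mu\le C_{a,B}$ for every $a>0$, and in particular for $a=p+n$. Applying \eqref{eq:holder-transfer} with $F=|R_f|$ gives
$$
\|R-nLf\|_{L^{(p+n)/2}(d\mu)}=\|e^fR_f\|_{L^{(p+n)/2}(d\mu)}\le B\,C_{p+n,B}^{(p-n)/(p(p+n))}.
$$
Hence $\|Lf\|_{L^q}\le C$ with $q=(p+n)/2$. Note that $q>n$ because $p>n$.

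\emph{Step 3: $W^{2,q}$ bound.} The last estimate of Lemma~\ref{lem:hessian} gives $\|f-\int_Mf\dd\mu\|_{W^{2,q}}\le C$.

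\emph{Step 4: H\"older bound.} The real dimension is $2n$. Since $q>n=(2n)/2$, Morrey's embedding gives $W^{2,q}\subset C^{\alpha}$ for every $\alpha<\min\{1,2-2n/q\}=\min\{1,2-4n/(p+n)\}$. This is exactly the stated range, so $\|f-\int_Mf\dd\mu\|_{C^\alpha}\le C$. In particular the oscillation is bounded in sup norm.

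\emph{Step 5: controlling the mean.} By \eqref{eq:mean-control}, $|\int_Mf\dd\mu|\le\|f-\int_Mf\dd\mu\|_\infty\le C$. Adding this constant back yields the bounds on $\|f\|_{W^{2,q}}$ and $\|f\|_{C^\alpha}$.

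The main point requiring care is Step 2. We must check that the Lemma~\ref{lem:exponential} constant for $a=p+n$ depends only on $B$, and that the exponents in \eqref{eq:holder-transfer} combine correctly. Both are direct. The only genuinely borderline input is the $L^n$ bound in Step 1: it sits exactly at the critical exponent where the kernel estimate \eqref{eq:kernel-bound} gives exponential integrability but not boundedness. That is why the extra integrability from $p>n$ is needed in Step 2 to pass above the critical exponent.
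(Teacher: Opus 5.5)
Your proposal is correct and follows the paper's proof essentially step for step: the identity $\|R-nLf\|_{L^n(d\mu)}=\|R_f\|_{L^n(d\mu_f)}$, then Lemma~\ref{lem:exponential} with $a=p+n$ feeding \eqref{eq:holder-transfer}, then the scalar $W^{2,(p+n)/2}$ estimate of Lemma~\ref{lem:hessian} with Sobolev embedding, and finally \eqref{eq:mean-control} for the mean. One notational point only: Lemma~\ref{lem:exponential}'s hypothesis bounds $\|Lf\|_{L^n}$, so the constant in Step~2 should be $C_{p+n,C(B)}$ rather than $C_{p+n,B}$; it still depends only on the background, $p$ and $B$.
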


\begin{proof}
Since $\mu_f(M)=1$, the conformal identity~\eqref{eq:curvature} gives
$$
\|R-nLf\|_{L^n(d\mu)}
=\|R_f\|_{L^n(d\mu_f)}\le B,\qquad
\|Lf\|_{L^n}\le (B+\|R\|_{L^n})/n.
$$
Estimate~\eqref{eq:exponential} bounds $\int_Me^{(p+n)f}\dd\mu$ by a fixed constant. Apply~\eqref{eq:holder-transfer} with $F=|R_f|$. It follows that $\|R-nLf\|_{L^{(p+n)/2}}\le C$, and hence $\|Lf\|_{L^{(p+n)/2}}\le C$.

The scalar elliptic estimate of Lemma~\ref{lem:hessian} and Sobolev embedding give
$$
\left\|f-\int_Mf\dd\mu\right\|_{W^{2,(p+n)/2}}
+\left\|f-\int_Mf\dd\mu\right\|_{C^\alpha}
\le C.
$$
The mean is controlled by~\eqref{eq:mean-control}. This proves the full estimate on the original Hermitian background.
\end{proof}

\section{Local existence and uniform smoothing}\label{sec:local}

The following statement gives a common lifetime from an amplitude bound
and a spatial continuity scale. Appendix~\ref{app:local} contains its proof.

\begin{proposition}\label{prop:local}
For each $B<\infty$, there are $r_0,\delta,c>0$, depending only on $B$ and the fixed background, with the following property. If $g_0\in C^\infty(M,\mathbb R)$ satisfies $\int_Me^{ng_0}\dd\mu=1$ and
$$
\|g_0\|_\infty\le B,\qquad
\sup_{d(x,y)\le r}|g_0(x)-g_0(y)|\le\delta,\qquad 0<r\le r_0,
$$
then \eqref{eq:flow} has a unique smooth solution on $[0,cr^4]$, and
$$
\sup_{0\le t\le cr^4}\|f(t)\|_\infty\le B+1,\qquad
\sup_{cr^4/2\le t\le cr^4}\|\nabla^kf(t)\|_\infty\le C_k r^{-k}\quad(k\ge1).
$$
Here $C_k$ depends only on $B,k$ and the fixed background. All constants
are independent of positive-order derivatives of $g_0$.
\end{proposition}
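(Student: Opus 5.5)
The plan is to treat \eqref{eq:flow} as a quasilinear fourth-order parabolic equation and to solve it by a contraction argument around a mollified initial datum. The argument has to work at the scale $r$ and use only the smallness of the oscillation of $g_0$ at that scale.

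\emph{Principal part.} Expanding $L^*(e^{(n-2)f}(R-nLf))$ shows that
$$
f_t=-2n^2e^{-2f}L^*Lf+\mathcal N(f,\nabla f,\nabla^2f,\nabla^3 f).
$$
The principal symbol is $-\tfrac{n^2}{2}e^{-2f}|\xi|^4$, so the equation is uniformly parabolic once $\|f\|_\infty\le B+1$. The lower-order terms are polynomial in $\nabla f,\nabla^2f,\nabla^3f$, with coefficients smooth in $f$. They are scaling-subcritical: each term of total derivative order $4$ scales like $r^{-4}$. Because $E(f)$ is a constant in space, it is bounded along the flow by the energy identity~\eqref{eq:dissipation}. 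The volume constraint is preserved automatically, by the ODE argument of Section~\ref{sec:variation}.

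\emph{Construction.} I would work in normal coordinates on balls of radius $r$ and rescale $x=ry$, $t=r^4s$. In the rescaled problem $g_0$ has oscillation at most $\delta$ on unit balls, the background is $C^k$-close to flat for $r\le r_0$, and the target interval becomes $s\in[0,c]$. Globally, I would freeze coefficients at a mollification $\bar g=g_0*\phi_r$. This gives $\|g_0-\bar g\|_\infty\le\delta$ and $\|\nabla^k\bar g\|_\infty\le C_kBr^{-k}$.

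I would then seek $f=\bar g+v$ as the fixed point of the map sending $v$ to the solution of the linear problem
$$
v_t+2n^2e^{-2\bar g}L^*Lv=F(v),\qquad v(0)=g_0-\bar g.
$$
The map acts in the Koch--Lamm type space $X$ with norm
$$
\sup_t\|v\|_\infty+\sup_t\sum_{j=1}^{3}t^{j/4}\|\nabla^jv\|_\infty,
$$
in the spirit of~\cite{KL12}. The linear operator has smooth coefficients at scale $r$, so for $t\le cr^4$ its kernel obeys Gaussian-type biharmonic bounds. The free solution from $L^\infty$ data of size $\delta$ then has $X$-norm at most $C\delta$.

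The forcing collects $(e^{-2f}-e^{-2\bar g})L^*L f$ together with the lower-order terms. Its $X$-bound is controlled by $\delta\|v\|_X$, plus a small multiple of $\|v\|_X$, plus powers of $c^{1/4}$. The $c^{1/4}$ factors come from the derivatives of $\bar g$, which are of size $r^{-k}$ and are integrated over times $t\le cr^4$. For $\delta$ and $c$ small, depending only on $B$ and the background, this gives a contraction on a ball of radius $1$ in $X$. That yields existence, uniqueness, and the bounds $\|f\|_\infty\le B+1$ and $|\nabla^k f|\le C_kr^{-k}$ for $k\le3$ at times $t\ge cr^4/2$.

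\emph{Smoothness and higher derivatives.} Smoothness for $t>0$ follows from standard parabolic theory, since $g_0$ is smooth; here the constants are not needed. For the quantitative higher bounds, I would bootstrap by interior parabolic Schauder estimates on rescaled parabolic cylinders of size $r\times r^4$. The rescaled equation has $C^3$-bounded coefficients, which yields $\|\nabla^kf\|_\infty\le C_kr^{-k}$ with constants independent of the derivatives of $g_0$.

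\emph{Main obstacle.} The hardest step is closing the fixed-point estimate while keeping the leading coefficient $e^{-2f}$ quasilinear. The perturbation $(e^{-2f}-e^{-2\bar g})\nabla^4 v$ is only of order $\delta+\|v\|_\infty$, and it must be absorbed using maximal-regularity-type bounds for $t^{1/4}$-weighted fourth derivatives. I would first try the Koch--Lamm approach: include in $X$ a weighted $L^\infty$-BMO (Carleson) norm of $\nabla^4v$. If that is too delicate on a curved background, the fallback is a weighted parabolic H\"older space with time weights $t^{(k+\alpha)/4}$, which gives the same conclusion at the cost of heavier bookkeeping.
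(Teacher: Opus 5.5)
Your proposal has a genuine gap at the step you yourself flag as the main obstacle, and the problem is worse than an absorption issue. With the norm you actually commit to, $\sup_t\|v\|_\infty+\sup_t\sum_{j\le3}t^{j/4}\|\nabla^jv\|_\infty$, the fixed-point map is not even bounded on a ball.

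\emph{Top-order term.} The quasilinear term $(e^{-2f}-e^{-2\bar g})L^*Lv$ contains $\nabla^4v$. Your norm does not control $\nabla^4v$, and no $L^\infty$ maximal-regularity estimate recovers it.

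\emph{Critical lower-order terms.} Independently of the top-order term, the scale-critical terms $(\nabla^2v)^2$, $\nabla v\cdot\nabla^3v$ and $|\nabla v|^2\nabla^2v$ are only bounded pointwise by $t^{-1}\|v\|_X^2$. The Duhamel integral then involves $\int_0^ts^{-1}\,ds=\infty$. For instance, in a chart $v=\epsilon\cos(t^{-1/4}y_1)$ has $\|v\|_X\approx\epsilon$, but the Duhamel term of $(\nabla^2v)^2$ diverges. Since $L^\infty$ is the scale-invariant data class here, this is the endpoint where sup-norm Kato-type spaces fail. Your claim that the forcing is bounded by $\delta\|v\|_X$ plus small terms is therefore false, and the weighted H\"older fallback has the same logarithmic divergence.

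\emph{Nonlocal term.} You bound the nonlocal term by $E(f(t))\le E(g_0)$. But $E(g_0)$ depends on second derivatives of $g_0$, which destroys the required uniformity in the initial derivatives.

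\emph{What the paper does instead.} The missing idea is to exploit the divergence structure of the equation and work with Carleson-type norms.
\begin{itemize}
\item The paper drops the constant term $(n-2)E(f)$. Using \eqref{eq:adjoint-identity} with $w=e^{-ng}$ and $V=e^{(n-2)g}(R-nLg)$, it rewrites the unnormalized equation as \eqref{eq:divergence}. There the top-order nonlinearity is a second divergence $L^*\bigl((e^{-2h}-e^{-2(h+u)})Lu\bigr)$, and the third-order terms become $\divmu(e^{-2g}Lg\nabla g)$.
\item The space $X_T$ controls only $u$ and $t^{1/4}\nabla u$ in $L^\infty$, the Carleson $L^2$ norm of $\nabla^2u$ and $L^4$ norm of $\nabla u$ on initial cylinders, and a weighted $L^q$ norm of $\nabla^2u$ on terminal halves. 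The source spaces $Y_{j,T}$ accept $j$-fold divergences.
\item The energy-type Carleson bound makes $(\nabla^2u)^2$ an admissible $L^1$ source. The top-order difference is bounded in $Y_{2,T}$ by $C_B\|u\|_\infty\|u\|_{X_T}+C_B\delta\vartheta^{1/2}\|u\|_{X_T}$, which is small.
\item The variable-coefficient estimate \eqref{eq:linear} comes from freezing coefficients on radius-$r$ charts. It uses the sharper mollifier bound $\|\nabla^jh\|_\infty\le C_j\delta r^{-j}$, obtained by subtracting $g_0(x)$ before differentiating the kernel; your bound $C_jBr^{-j}$ would force smaller charts.
\item Higher derivatives come from translation parameters and the implicit function theorem, since $X_T$ controls nothing beyond $\nabla^2u$.
\item The volume-one flow is recovered by a constant shift and a time change, whose factor depends only on $|g-g_0|\le1/8$ and not on $E(g_0)$.
\end{itemize}
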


For a fixed smooth normalized initial function, the oscillation
condition holds when $r$ is sufficiently small. The proposition therefore
gives a unique smooth local solution and a maximal smooth interval.
We will use its uniformity in the initial derivatives both near a finite
terminal time and along an infinite trajectory.

\section{A curvature criterion for continuation}\label{sec:continuation}

Fix a smooth normalized solution $f$ of \eqref{eq:flow} on $[0,T)$,
where $0<T<\infty$, and an earlier time $0<t_0<T$.
Constants in this section may depend on the fixed background, $T,t_0$,
the solution on an earlier compact time interval, its curvature history,
and the uniform integrability modulus in Lemma~\ref{lem:density}.

\subsection{A fixed Green kernel and the inverse conformal factor}
\begin{lemma}
There is a smooth positive function $\pi$, with $L^*\pi=0$ and
$\int_M\pi\dd\mu=1$, and a nonnegative kernel $G_\pi$ such that
\begin{equation}\label{greenrep}
u(x)-\int_Mu\pi\dd\mu
=\int_MG_\pi(x,y)(-Lu(y))\pi(y)\dd\mu(y)
\end{equation}
for smooth $u$, with $\sup_x\int_MG_\pi(x,y)\pi(y)\dd\mu(y)<\infty$.
\end{lemma}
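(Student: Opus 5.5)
We construct $\pi$ as the principal eigenfunction of $L^*$, and $G_\pi$ as the Green function of $-L$ on the complement of constants, normalized by $\pi$. Since $M$ is closed and connected and $L=\tfrac12\Delta_\gamma+b\cdot\nabla$ has no zeroth-order term, $\ker L=\R$ by the strong maximum principle, as in Lemma~\ref{lem:hessian}. By the Fredholm alternative for the elliptic operator $L$, the formal adjoint $L^*$ (taken with respect to $d\mu$) then has a one-dimensional kernel.

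To see that this kernel is spanned by a positive function, we use a Krein--Rutman argument. The semigroup $e^{sL}$ is positivity preserving and strongly positive on continuous functions, by the parabolic strong maximum principle. It fixes constants, so its dual semigroup preserves probability measures. A Markov--Kakutani fixed point, or time-averaging the dual semigroup applied to $d\mu$ and extracting a weak-$*$ limit, gives an invariant probability measure $\nu$ with $\int Lu\,d\nu=0$ for all smooth $u$. Elliptic regularity for $L^*\nu=0$ makes $\nu=\pi\,d\mu$ with $\pi$ smooth and $\pi\ge0$. The strong maximum principle (Harnack inequality) for the adjoint equation, which is again second-order elliptic with smooth coefficients, then gives $\pi>0$, and we normalize $\int\pi\,d\mu=1$. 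In the Gauduchon case $\pi\equiv1$ by the remark before Lemma~\ref{lem:hessian}.

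For the kernel, we solve $-Lu=\phi$ for $\phi$ with $\int\phi\pi\,d\mu=0$; this is exactly the solvability condition, since $\operatorname{range}L=(\ker L^*)^\perp=\pi^\perp$. Imposing the normalization $\int u\pi\,d\mu=0$ gives a unique solution. This defines a bounded inverse of order $-2$ on the $\pi$-mean-zero subspace, with a Schwartz kernel $K(x,y)$ satisfying $|K|\le Cd(x,y)^{2-2n}$, exactly as in Lemma~\ref{lem:hessian}. For general smooth $u$, apply this to $\phi=-Lu$, which automatically has zero $\pi$-mean. This gives $u-\int u\pi=\int K(x,y)(-Lu(y))\,d\mu(y)$.

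The real issue is nonnegativity. The kernel $K$ is defined only modulo adding functions $a(x)\pi(y)$, since $-Lu$ annihilates $\pi$. We exploit this freedom and set
\[
G_\pi(x,y)=\frac{K(x,y)}{\pi(y)}+A,
\]
with the constant $A$ chosen large. Adding $A$ leaves \eqref{greenrep} unchanged, because $\int A(-Lu)\pi\,d\mu=0$. It remains to show that $K(x,y)/\pi(y)$ is bounded below.

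This lower bound is the main obstacle. Near the diagonal, the leading singularity of $K$ is the Euclidean Newtonian kernel $c\,d^{2-2n}$ with $c>0$ for $-\tfrac12\Delta_\gamma$. This positive term dominates the lower-order corrections, which are $O(d^{3-2n})$. Away from the diagonal, $K$ is smooth, hence bounded on the compact set $\{d\ge\epsilon\}$. So $K\ge -C$, and $A=C\sup(1/\pi)$ works.

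For a cleaner route, we can instead write $G_\pi=\int_0^\infty\bigl(p_s(x,y)/\pi(y)-1\bigr)\,ds+A$, using the heat kernel $p_s$ of $L$. This requires that the heat kernel converges to $\pi(y)$ exponentially, which follows from the spectral gap of the Markov semigroup. Either way, integrability follows from $\int d^{2-2n}\,d\mu(y)\le C$ uniformly in $x$, since the singularity is integrable in real dimension $2n$. Hence $\sup_x\int G_\pi\pi\,d\mu\le C+A<\infty$.
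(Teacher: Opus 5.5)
Your argument is correct, but its main line differs from the paper's.

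\textbf{The paper's route.} The paper works entirely with the heat semigroup of $L$:
\begin{itemize}
\item It obtains $\pi$ as the density of a time-averaged invariant measure. Smoothness and strict positivity come directly from applying the adjoint heat operator at time one, whose kernel is smooth and positive.
\item It proves uniform exponential mixing $|p_t(x,y)-1|\le Ce^{-ct}$, for the kernel taken relative to $\pi\,d\mu$, by a Doeblin minorization $p_1\ge\varepsilon$.
\item It sets $G_\pi=\int_0^\infty(p_t-1)\,dt+A$.
\end{itemize}
Nonnegativity is then elementary: $p_t\ge0$ makes the integral over $(0,1)$ at least $-1$, and the tail is bounded. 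The $L^1(\pi\,d\mu)$ bound follows from the row-mass identity, with no singularity analysis.

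\textbf{Your route.} You replace this with elliptic theory:
\begin{itemize}
\item Index zero and $\ker L=\R$ give a one-dimensional $\ker L^*$.
\item An invariant measure, elliptic regularity and the strong maximum principle identify this kernel with a smooth positive $\pi$.
\item The kernel $K$ of the inverse of $-L$ on the $\pi$-mean-zero functions gives the representation, after the shift $K/\pi+A$.
\end{itemize}

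\textbf{The cost.} The price is in the sign. Lemma~\ref{lem:hessian} only supplies $|K|\le Cd^{2-2n}$. So your lower bound $K\ge -C$ really rests on the finer classical-symbol expansion
\[
K(x,y)=c(x)\,d(x,y)^{2-2n}+O\bigl(d(x,y)^{3-2n}\bigr),\qquad c>0 .
\]
This expansion is valid because, once you extend the inverse to all functions by precomposing with $\phi\mapsto\phi-\int_M\phi\pi\,d\mu$, it differs from a parametrix of $-L$ by a smoothing operator. That fact is standard, but it is more than ``exactly as in Lemma~\ref{lem:hessian}'', and you should say so explicitly. In exchange you get a pointwise bound $G_\pi\le Cd^{2-2n}+A$ and avoid parabolic estimates altogether.

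\textbf{Your alternative route.} Your heat-kernel alternative is essentially the paper's proof. There, the phrase ``spectral gap of the Markov semigroup'' needs a concrete argument, because $L$ is not self-adjoint. Doeblin's minorization is the simplest way to supply it; Perron--Frobenius for the compact, strongly positive operator $P_1$ would also work.
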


\begin{proof}
The fixed uniformly elliptic operator $L$, with $L1=0$, has a positive
constant-preserving heat semigroup $P_t$ with a smooth strictly positive
kernel. Time averages of transition probabilities have a weak limit on
compact $M$. Translating the averaging interval by a fixed positive time
changes the averages in total variation by a quantity tending to zero,
so the limit is invariant. Applying the adjoint heat operator at time one
gives its smooth strictly positive density $\pi$, and $L^*\pi=0$.

Let $p_t(x,y)$ be the heat kernel relative to $\pi\dd\mu$. Its row mass
and invariant mean are one. Choose $0<\varepsilon<1$ below the minimum
of $p_1$. The kernel $p_1-\varepsilon$ is nonnegative with mass
$1-\varepsilon$, so
$\operatorname{osc}(P_1h)\le(1-\varepsilon)\operatorname{osc}(h)$.
Apply this to $p_1(\cdot,y)$ at successive integer times and use contraction
at the remaining time. Its invariant mean is one, whence
$|p_t(x,y)-1|\le Ce^{-ct}$ for $t\ge1$, uniformly in $x,y$.

The integral $\int_0^\infty(p_t(x,y)-1)\dd t$ has uniformly bounded
$L^1(\pi\dd\mu)$ norm in $y$. It is bounded below: its part over $(0,1)$
is at least $-1$, and its tail is uniformly bounded. Adding a fixed constant
therefore gives a nonnegative kernel $G_\pi$. This addition does not change
\eqref{greenrep}, because $\int_MLu\pi\dd\mu=0$.
Integration of $-\partial_tP_tu$ proves that formula. The row mass identity
for $0<t<1$ and exponential decay for $t\ge1$ give the asserted uniform
kernel bound.
\end{proof}

\begin{lemma}
Along a normalized solution on a finite interval $[t_0,T)$,
\begin{equation}\label{inverseinfty}
\|e^{-f(t)}\|_\infty\le C\bigl(1+\|R_f(t)\|_\infty\bigr).
\end{equation}
\end{lemma}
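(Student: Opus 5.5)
The plan is to treat $v=e^{-f(t)}$ at each fixed time as a subsolution of a fixed linear elliptic inequality. Then a sup bound for $v$ reduces to a low moment of $v$ together with $\|R_f\|_\infty$. First I will derive the pointwise inequality. With $\Psi(h)=e^{-h}$, the chain rule $L\Psi(h)=\Psi'(h)Lh+\tfrac12\Psi''(h)|\nabla h|_\gamma^2$ from Section~\ref{sec:variation} gives
$$
Lv=-vLf+\tfrac12 v|\nabla f|_\gamma^2\ \ge\ -vLf .
$$
By \eqref{eq:curvature}, $nLf=R-e^fR_f$, and therefore
$$
-Lv\le vLf=\frac1n\bigl(Rv-R_f\bigr)\le \frac1n\bigl(\|R\|_\infty v+\|R_f(t)\|_\infty\bigr).
$$
So $v$ is a nonnegative subsolution of $-Lv-av\le b$. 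Here $a=\|R\|_\infty/n$ is a fixed constant and $b=\|R_f(t)\|_\infty/n$.

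Second, I will convert this into a sup bound. A direct use of the nonnegative kernel $G_\pi$ from \eqref{greenrep} gives
$$
v(x)\le\int_Mv\pi\,d\mu+\frac1n\int_MG_\pi(x,y)\bigl(\|R\|_\infty v(y)+\|R_f\|_\infty\bigr)\pi(y)\,d\mu(y).
$$
This is circular in the zeroth-order term, because $G_\pi$ only lies in $L^s$ for $s<n/(n-1)$. I would therefore use the local maximum principle for subsolutions of uniformly elliptic equations with bounded zeroth-order coefficient, applied to the fixed operator $L=\tfrac12\Delta_\gamma+b\cdot\nabla$ in finitely many fixed charts (De Giorgi--Nash--Moser, as in Gilbarg--Trudinger, Theorem~8.17). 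For every $\varepsilon>0$ this gives
$$
\|v\|_\infty\le C_\varepsilon\Bigl(\bigl\|v\bigr\|_{L^\varepsilon(d\mu)}+\|R_f(t)\|_\infty\Bigr).
$$
The constant depends only on the background and $\varepsilon$, not on $t$.

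Third, and this is the step I expect to be the main obstacle, I need a bound for a small negative moment $\int_Me^{-\varepsilon f(t)}\,d\mu$ that is uniform on $[t_0,T)$. Lemma~\ref{lem:density} controls only positive powers $e^{nf}$. Moreover, Lemma~\ref{lem:exponential} needs $\|Lf\|_{L^n}$, and by \eqref{eq:curvature} that quantity involves $e^fR_f$ rather than $R_f$. My first attempt would be the Green representation applied to $u=-f$:
$$
-f(x)\le-\int_Mf\pi\,d\mu+\frac1n\int_MG_\pi(x,y)\bigl(|R|+e^{f}|R_f|\bigr)\pi\,d\mu.
$$
I would then try to control the term $\int G_\pi e^f\pi$ by pairing the integrability of $G_\pi$ against the uniform integrability of $e^{nf}$ from Lemma~\ref{lem:density}, at least on the exponential scale needed for $\int e^{-\varepsilon f}$ with $\varepsilon$ small. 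The remaining lower bound on the weighted mean $\int f\pi\,d\mu$ would come from integrating its time derivative along the flow. Using $e^{-2f}(R-nLf)^2=R_f^2$, the nonlocal terms of \eqref{eq:flow} are bounded by $\|R_f\|_\infty^2$ and $E(f)$, and their time integral is finite by the curvature history. This is consistent with the stated convention that constants may depend on that history and on the integrability modulus. The fourth-order term $2ne^{-nf}L^*(\cdot)$ tested against $\pi$ must be integrated by parts using $L^*\pi=0$; I expect this to be the delicate computation.

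Combining the three steps gives $\|e^{-f(t)}\|_\infty\le C(1+\|R_f(t)\|_\infty)$ on $[t_0,T)$, which is \eqref{inverseinfty}.
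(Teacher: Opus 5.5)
\paragraph{Verdict.} Your Steps 1 and 2 are correct, and the reduction ``uniform bound on a small negative moment $\Rightarrow$ linear bound'' is logically sound. The gap is Step 3, and neither of your suggested routes closes it.

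\paragraph{Why the Green-function route fails.} The forcing enters as $\|R_f(t)\|_\infty\,\Phi(x)$, where $\Phi(x)=\frac1n\int_MG_\pi(x,y)e^{f(y)}\pi(y)\,d\mu(y)$. Since $e^f$ is only in $L^n$, the borderline exponent for a kernel of size $d^{2-2n}$, $\Phi$ has no uniform bound. It is only uniformly exponentially integrable, as in Lemma~\ref{lem:exponential}. So even granting control of the mean, $\int_Me^{-\varepsilon f(t)}\,d\mu$ is bounded only by a quantity growing at least exponentially in $\|R_f(t)\|_\infty$. This is not uniform on $[t_0,T)$, because $\|R_f\|_\infty$ is merely square integrable in time. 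If you let $\varepsilon$ shrink like $1/\|R_f(t)\|_\infty$, the power $1/\varepsilon$ in $\|v\|_{L^\varepsilon}$ and the constant $C_\varepsilon$ from Step 2 again give exponential dependence. That is not enough: the statement is linear, and Lemma~\ref{negativemoments} needs linearity so that its Gronwall coefficient $C(1+\|R_f\|_\infty^2)$ is integrable under \eqref{eq:endpoint}.

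\paragraph{Why the time-integration route is circular.} After moving $L^*$ onto $e^{-nf}\pi$, the fourth-order term in $\frac{d}{dt}\int_Mf\pi\,d\mu$ becomes
\[
2n\int_Me^{-f}R_f\Bigl[L\pi-n\pi Lf+\tfrac{n^2}{2}\pi|\nabla f|_\gamma^2-n\langle\nabla f,\nabla\pi\rangle_\gamma\Bigr]d\mu .
\]
Terms such as $Re^{-f}R_f\pi$ and $e^{-f}R_f\pi|\nabla f|_\gamma^2$ require $\int_Me^{-f}\,d\mu$. Evolving that moment requires still higher negative moments. The paper closes this hierarchy only in Lemma~\ref{negativemoments}, and it does so using \eqref{inverseinfty} itself.

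\paragraph{The missing idea.} Use the logarithmic substitution $w=\log\bigl(e^{-f}+1+\|R_f(t)\|_\infty\bigr)$. It is convex in $f$, so the gradient term has a favorable sign:
\[
-Lw\le\frac{Re^{-f}-R_f}{n\bigl(e^{-f}+1+\|R_f\|_\infty\bigr)}\le\frac{\|R\|_\infty+1}{n}.
\]
This removes both the zeroth-order term and the curvature forcing, which is exactly the circularity you hit in Step 2. Then \eqref{greenrep}, with $G_\pi\ge0$ and bounded row mass, gives $\max_Mw\le\int_Mw\pi\,d\mu+C$, with coefficient exactly one on the mean.

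The flow enters only through Lemma~\ref{lem:density}. Uniform integrability gives $H$ with $\int_{\{e^{nf}>H\}}e^{nf}\,d\mu\le1/4$ for all $t$. Hence $\{1/4\le e^{nf(t)}\le H\}$ has $\pi$-mass at least a fixed $\eta>0$, and $e^{-f}\le4^{1/n}$ there. Splitting the mean gives
\[
\max w\le\eta\log(4^{1/n}+1+\|R_f\|_\infty)+(1-\eta)\max w+C .
\]
Therefore $\max w\le\log(4^{1/n}+1+\|R_f\|_\infty)+C/\eta$, which exponentiates to the linear bound. No negative moments and no time integration are needed.

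The exactness of \eqref{greenrep} matters here. A De Giorgi--Moser bound for $w$, as in your Step 2, would put a constant larger than one in front of the mean. The same rearrangement would then not give linear dependence.
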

\begin{proof}
Lemma~\ref{lem:density} gives a fixed $H>1$ such that $\int_{\{e^{nf}>H\}}e^{nf}\,d\mu\le1/4$ at every time. Since $\mu(M)=1$, the set $\{1/4\le e^{nf(t)}\le H\}$ has density mass at least $1/2$, and hence $\mu(\{1/4\le e^{nf(t)}\le H\})\ge1/(2H)$. Positivity of $\pi$ gives $\int_{\{1/4\le e^{nf(t)}\le H\}}\pi\dd\mu\ge\eta>0$ uniformly.

Fix a smooth time. Direct application of the chain rule gives
\begin{align*}
-L\log\bigl(e^{-f}+1+\|R_f\|_\infty\bigr)
={}&\frac{Re^{-f}-R_f}{n(e^{-f}+1+\|R_f\|_\infty)}\\
&-\frac{(1+\|R_f\|_\infty)\tfrac12|\nabla e^{-f}|_\gamma^2}
{e^{-f}(e^{-f}+1+\|R_f\|_\infty)^2}
\le{}&\frac{\|R\|_\infty+1}{n}.
\end{align*}
Formula~\eqref{greenrep} therefore bounds the maximum of the logarithm by its invariant mean plus a fixed constant. On $\{1/4\le e^{nf(t)}\le H\}$ one has $e^{-f}\le4^{1/n}$. If the maximum of $e^{-f}$ exceeds that number, splitting the mean over this set and its complement yields
\begin{align*}
\log\bigl(\max_Me^{-f}+1+\|R_f\|_\infty\bigr)
\le{}&\eta\log\bigl(4^{1/n}+1+\|R_f\|_\infty\bigr)\\
&+(1-\eta)\log\bigl(\max_Me^{-f}+1+\|R_f\|_\infty\bigr)+C.
\end{align*}
Rearrangement proves \eqref{inverseinfty}. If the maximum does not exceed $4^{1/n}$, the same estimate holds directly.
\end{proof}

\subsection{Weighted derivative bounds}
\begin{lemma}\label{closedtop}
Let $L_k$ be smooth real second-order operators, with $L_k1=0$, on balls expanding to $\R^{2n}$. Suppose that their coefficients and smooth positive volume densities converge locally in every spatial $C^m$ norm to those of $\frac12\Delta_{\R^{2n}}$ and a positive constant measure. Let $R_k\to0$ smoothly locally. Suppose that $v_k$ solve
\begin{equation}\label{rescaledflow}
 (v_k)_s=2ne^{-nv_k}L_k^*\!\left(e^{(n-2)v_k}(R_k-nL_kv_k)\right)
 -(n-2)e^{-2v_k}(R_k-nL_kv_k)^2+\lambda_k(s)
\end{equation}
on all fixed spatial balls times $[-a,0]$, where $a>0$, $L_k^*$ is the actual adjoint for the indicated measure, and $\|\lambda_k\|_\infty\to0$. Assume that on every such cylinder the quantities $|v_k|$, $|D^jv_k|$ for $1\leq j\leq4$, and $|(v_k)_s|$ are uniformly bounded. Then a subsequence converges locally, including $s=0$, through fourth spatial derivatives and the first time derivative on $\R^{2n}\times[-a/2,0]$.
\end{lemma}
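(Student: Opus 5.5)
Given uniform bounds on $v_k$, $D^jv_k$ ($j\le4$) and $(v_k)_s$, I will upgrade them to uniform higher-order bounds on slightly smaller cylinders. Arzel\`a--Ascoli then gives convergence through fourth spatial and first time derivatives, uniformly up to $s=0$.

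\textbf{Step 1: rewrite as a linear parabolic equation.} I expand the right side of \eqref{rescaledflow} with the chain rule. Its principal part is $-n^2e^{-2v_k}L_k^{*}L_k v_k$, up to lower-order terms, so
$$(v_k)_s+a_k^{ijlm}\partial_{ijlm}v_k=F_k,\qquad a_k^{ijlm}=\tfrac{n^2}{4}e^{-2v_k}\delta^{ij}\delta^{lm}+o(1).$$
The remainder $F_k$ is a polynomial in $D^{\le3}v_k$, $e^{\pm v_k}$, $R_k$ and its derivatives, the coefficients of $L_k$, and $\lambda_k$. Because $|v_k|$ is uniformly bounded, the leading coefficient is uniformly elliptic with constants independent of $k$. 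The bounds on $D^{\le4}v_k$ and $(v_k)_s$ control $F_k$ in $L^\infty$. They also make $v_k$ uniformly Lipschitz in $x$ and in $s$.

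\textbf{Step 2: H\"older bounds on the coefficients.} From the Lipschitz bounds, $a_k$ and the lower-order coefficients are uniformly H\"older in the parabolic metric (space exponent $1$, time exponent $1/4$). To make $F_k$ H\"older, I need H\"older control of $D^3v_k$ in time. I would get it by interpolating between the bounded $D^4v_k$ and the time-Lipschitz bound, in the standard way: a function with bounded $\partial_s$ and bounded $D^4$ has $D^jv$ H\"older in $s$ with exponent $(4-j)/4$. For $j=3$ this gives exponent $1/4$. So $F_k$ is uniformly parabolically $C^{\alpha}$ for some $\alpha>0$.

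\textbf{Step 3: bootstrap.} I apply interior parabolic Schauder estimates for fourth-order equations to cylinders $B_\rho\times[-a+\epsilon,0]$. These are Solonnikov-type estimates. They are interior in space but hold up to the final time $s=0$, since the final time is not a boundary that needs data. The result is a uniform $C^{4+\alpha,1+\alpha/4}$ bound. I then differentiate the equation in space and repeat, which gives bounds on $C^{4+m+\alpha}$ for each $m$. I do this on a nested sequence of cylinders shrinking to $\R^{2n}\times[-a/2,0]$ on each fixed ball. The smooth local convergence of the coefficients of $L_k$, of the densities, and of $R_k$ supplies the uniform coefficient norms needed at every stage.

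\textbf{Step 4: extract the limit.} A uniform $C^{4+\alpha,1+\alpha/4}$ bound is enough. By Arzel\`a--Ascoli and a diagonal argument over balls $B_j$ exhausting $\R^{2n}$, a subsequence converges in $C^{4,1}$ locally on $\R^{2n}\times[-a/2,0]$, including $s=0$.

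The main obstacle is Step 2. The hypotheses give only boundedness, not continuity, of $D^4v_k$ and $(v_k)_s$, while Schauder theory needs H\"older coefficients and data. The interpolation step supplying time-H\"older control of $D^3v_k$ is what makes the bootstrap start. If needed, I would replace Step 2 by an $L^q$ parabolic estimate, which only requires continuous leading coefficients. That gives $W^{4,1}_q$ bounds for large $q$, and parabolic Sobolev embedding then yields the H\"older continuity directly.
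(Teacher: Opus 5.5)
Your outline is the paper's: interpolate to get $|D^jv_k(x,s)-D^jv_k(x,t)|\le C|s-t|^{1-j/4}$, then view the equation as linear and fourth order, with principal coefficient depending on the solution and a source involving derivatives of order at most three. Next, apply a Schauder estimate valid up to $s=0$; the paper realizes your interior estimate by cutting off, extending the principal symbol, and applying Amann's Cauchy regularity theorem with uniqueness. Finally, use compact embedding. (The principal part is $-2n^2e^{-2v_k}L_k^*L_kv_k$, not $-n^2e^{-2v_k}L_k^*L_kv_k$; this is harmless.)

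\textbf{The gap.} The step that fails is the claim that $F_k$ is uniformly parabolically $C^\alpha$. You placed $\lambda_k$ in $F_k$, but the hypotheses give only $\|\lambda_k\|_\infty\to0$, with no control of its modulus of continuity in time. The uniform $C^{4+\alpha,1+\alpha/4}$ bound you assert is in fact false under the stated hypotheses. Take $L_k=\frac12\Delta_{\R^{2n}}$, $R_k=0$, $\lambda_k(s)=k^{-1}\sin(e^ks)$ and $v_k(x,s)=\int_0^s\lambda_k(u)\,du$. Every hypothesis holds, yet $(v_k)_s=\lambda_k$ has unbounded $C^{\alpha/4}$ seminorm in time for every $\alpha>0$. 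Your $L^q$ fallback adds nothing here: the hypotheses already give $W^{4,1}_\infty$ bounds, and what is needed is equicontinuity of $D^4v_k$ and $(v_k)_s$.

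\textbf{The fix.} The missing idea is the paper's first step: put $h_k=v_k-\int_0^s\lambda_k(u)\,du$. Since $L_k1=0$, adding a spatial constant $c$ to the argument multiplies the local expression on the right of \eqref{rescaledflow}, excluding $\lambda_k$, by $e^{-2c}$. So $h_k$ solves that local equation with no additive forcing, multiplied by the scalar factor $e^{-2\int_0^s\lambda_k(u)\,du}$, which is uniformly Lipschitz in time. Your Steps 2--4 then go through for $h_k$. Adding back $\int_0^s\lambda_k(u)\,du$, which tends to zero uniformly together with its derivative $\lambda_k$, gives the asserted convergence of $v_k$ through $D^4$ and $\partial_s$.
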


\begin{proof}
Put $h_k(y,s)=v_k(y,s)-\int_0^s\lambda_k(u)\dd u$. Constant translation of the local expression on the right of \eqref{rescaledflow}, excluding $\lambda_k$, multiplies it by $e^{-2\int_0^s\lambda_k(u)\,du}$. Thus $h_k$ satisfies that local equation with this scalar factor and with no additive forcing. Both $\int_0^s\lambda_k(u)\,du$ and $\lambda_k$ tend uniformly to zero; the scalar factor has a uniform Lipschitz bound.

On a smaller spatial ball, interpolation of $h_k(\cdot,s)-h_k(\cdot,t)$ between its $C^0$ bound $C|s-t|$ and its bounded fourth derivatives gives
$$
 |D^jh_k(x,s)-D^jh_k(x,t)|\leq C|s-t|^{1-j/4},\qquad 1\leq j\leq3.
$$
One may verify this by spatial finite differences of length $|s-t|^{1/4}$; for larger time differences the given bounds suffice. Spatial derivatives through order three are Lipschitz in space. Expansion of the equation therefore gives uniform $C^{\alpha,\alpha/4}$ bounds, for every $0<\alpha<1$, for its principal coefficients and its remaining source. The latter contains derivatives of $h_k$ of order at most three. Let $a_k^{ij}$ be the principal matrix of $L_k$. The positive principal symbol is
$$
 2n^2e^{-2\int_0^s\lambda_k(u)\,du-2h_k}\left(a_k^{ij}\xi_i\xi_j\right)^2.
$$
The assumed amplitude bound gives uniform normal ellipticity on each fixed cylinder.

Extend the principal tensor outside a larger controlled ball by a smooth convex interpolation with a fixed positive quartic symbol. A spatial cutoff supported in that ball has a commutator of order at most three. A time cutoff vanishing near $-a$ and equal to one on $[-a/2,0]$ produces a whole-space linear Cauchy problem with zero initial data. Its coefficients, source, ellipticity, and interval length have uniform bounds.

Apply Amann's Cauchy regularity theorem for uniformly normally elliptic operators of order four on a manifold without boundary \cite[Theorem 1.23(ii) and Remark 1.24]{Amann17}, with differentiability index zero and $0<\alpha'<\alpha<1$. The stronger $\alpha$ bounds give the required little H\"older regularity at exponent $\alpha'$, and $\R^{2n}$ is a uniformly regular manifold without boundary. The estimate bounds the localized solutions in $C^{4+\alpha',1+\alpha'/4}$, uniformly on the closed forward time interval. The constant depends only on the coefficient norms, ellipticity, and the fixed interval length, not on $k$. No times beyond zero are used. Uniqueness identifies the localized smooth solutions with the Cauchy solutions supplied by that theorem. Compact embedding on smaller compact cylinders gives convergence through $D^4h_k$ and $(h_k)_s$. Adding back $\int_0^s\lambda_k(u)\,du$ preserves this convergence. A diagonal subsequence over spatial balls proves the assertion.
\end{proof}

\begin{proposition}\label{weighted}
Suppose that $\int_{t_0}^T\|R_f(t)\|_\infty^2\,dt<\infty$. For every fixed $\sigma>0$ there is a constant $C_\sigma$ such that
\begin{equation}\label{weightedbound}
 e^{\sigma f}\sum_{j=1}^4|\nabla^jf|^{1/j}
       +e^{(\sigma+1/2)f}|f_t|^{1/4}\leq C_\sigma
 \quad\hbox{on }M\times[t_0,T).
\end{equation}
In particular, $f$ has a uniform upper bound on this interval.
\end{proposition}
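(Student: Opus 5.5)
We argue by contradiction through a blow-up and rescaling argument, using Lemma~\ref{closedtop} as the compactness tool. Set
$$
Q(x,t)=e^{\sigma f}\sum_{j=1}^4|\nabla^jf|^{1/j}+e^{(\sigma+1/2)f}|f_t|^{1/4}.
$$
Suppose $\sup_{M\times[t_0,T)}Q=\infty$. Since $f$ is smooth on compact subintervals, $Q$ is bounded on $M\times[t_0,t_1]$ for every $t_1<T$. We can therefore pick points $(x_k,t_k)$ with $t_k\uparrow T$ and $Q(x_k,t_k)=:\Lambda_k\to\infty$. We choose them by a point-picking procedure: first pass to $Q_k=\max_{M\times[t_0,t_k]}Q$, then use a Hamilton-type selection so that $Q\le 2\Lambda_k$ on a backward parabolic neighborhood of size comparable to $\Lambda_k^{-1}$ in space and $\Lambda_k^{-4}$ in time, with the metric measured in the conformal scale $e^{f(x_k,t_k)}$.

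The weights are chosen so that $Q$ is invariant under the natural rescaling. Set $\varepsilon_k=e^{\sigma f(x_k,t_k)}/\Lambda_k$ (up to a constant) and define
$$
v_k(y,s)=f(x_k+\varepsilon_k y,\;t_k+\varepsilon_k^4e^{\dots}s)-f(x_k,t_k)+\log(\text{scale}),
$$
so that $v_k(0,0)=0$. In fact the exact normalization is dictated by the flat scaling $f\mapsto f(\lambda x,\lambda^4t)$ together with the additive shift $f\mapsto f+c$, which multiplies the operator by $e^{-2c}$. The factor $e^{(\sigma+1/2)f}$ on $|f_t|^{1/4}$ is there precisely to match this $e^{-2c}$ weight. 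After rescaling, $|D^jv_k|$ for $1\le j\le4$ and $|(v_k)_s|$ are bounded on cylinders where $v_k$ stays bounded, and one of these quantities equals a fixed positive number at the origin. Along the rescaled flow, the curvature term $R_k$ and the constant $\lambda_k$ (coming from $(n-2)E$) tend to zero: the energy is bounded and the scale factors go to zero. The background coefficients converge to the flat ones, and the rescaled scalar curvature $e^{-v}(R_k-nL_kv_k)$ tends to zero uniformly. This last fact comes from the time-integral condition: $\int\|R_f\|_\infty^2dt$ is invariant under the scaling, so on the rescaled time interval $[-a,0]$ it equals the original integral over $[t_k-\varepsilon_k^4a,t_k]$, which tends to zero by absolute continuity. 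This gives $L^2$-in-time smallness of the rescaled curvature, and we upgrade it to pointwise smallness using the time-derivative bound.

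To keep $v_k$ bounded on cylinders we combine the lower bound $e^{-f}\le C(1+\|R_f\|_\infty)$ from \eqref{inverseinfty} with the gradient bound: $|\nabla v|\le C$ gives $|v_k|\le C(1+|y|)$ on spatial balls, and the $v_s$ bound handles time. Lemma~\ref{closedtop} then yields a limit $v_\infty$ on $\R^{2n}\times[-a/2,0]$ that solves the flat equation with zero curvature, that is, $Lv_\infty\equiv0$ up to the limiting zero curvature. More precisely, $e^{-v}(-nLv)=0$, so $v_\infty$ is harmonic in space, with $|\nabla v_\infty|$ bounded. Hence $v_\infty$ is affine in space. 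Since the curvature vanishes, the flow equation gives $(v_\infty)_s=0$ as well, so all derivatives of order $2$ through $4$ and the time derivative vanish.

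The remaining case is a nonzero constant gradient, which must be excluded by the weight. The rescaled weighted quantity equals one at the origin. If $\nabla v_\infty=c\neq0$, then $e^{\sigma v_\infty}|\nabla v_\infty|$ is unbounded along $+c$ and tends to zero along $-c$. This contradicts the selection bound $Q\le2$ on the rescaled neighborhood, provided the neighborhood can be taken to grow; the growth of neighborhood radii is exactly what the point-picking provides. Thus every term at the origin vanishes in the limit, contradicting normalization, and \eqref{weightedbound} follows. The upper bound on $f$ then follows because the bound gives $|\nabla e^{\sigma f}|\le C$, and $e^{\sigma f}$ has controlled $L^1$ mass by Lemma~\ref{lem:density}.

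The main obstacle is the selection step. We must arrange simultaneously a controlled neighborhood in which $Q$ stays comparable to its value at the base point, and compatible weights $e^{\sigma f}$ that vary within that neighborhood. We would handle this by noting that $|\nabla e^{\sigma f/1}|$ is itself controlled by $Q$, so $f$ changes by $O(1)$ across a ball of radius $\Lambda_k^{-1}e^{\sigma f}$.
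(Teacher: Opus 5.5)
\textbf{There is a genuine gap: the lower amplitude bound for $v_k$ on fixed cylinders is never established.} Lemma~\ref{closedtop} needs this bound, and your argument for it does not work.

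After rescaling, $Q\le1$ only gives
$$|\nabla^jv_k|\le e^{-j\sigma v_k},\qquad |(v_k)_s|\le e^{-4(\sigma+1/2)v_k},$$
that is, $|\nabla e^{\sigma v_k}|\le\sigma$ and $|\partial_se^{4(\sigma+1/2)v_k}|\le4(\sigma+1/2)$. With $v_k(0,0)=0$ and a short interval $[-a,0]$, this yields $v_k\le C_\rho$ on $B_\rho\times[-a,0]$ and two-sided control at the center. It does not stop $e^{\sigma v_k}$ from tending to zero elsewhere in a fixed ball. So your step ``$|\nabla v|\le C$, hence $|v_k|\le C(1+|y|)$'' assumes what is to be proved.

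The estimate \eqref{inverseinfty} cannot supply the bound, because it is not scale invariant. Set $\tau_k=e^{2f(x_k,t_k)}\varepsilon_k^4$ and let $\kappa_k(s)=\tau_k^{1/2}\|R_f(t_k+\tau_ks)\|_\infty$ be the rescaled curvature. Then \eqref{inverseinfty} reads
$$e^{-v_k}\le Ce^{f(x_k,t_k)}+C\varepsilon_k^{-2}\kappa_k(s).$$
Here $\varepsilon_k\to0$, $\kappa_k$ is small only in $L^2(-a,0)$, and $e^{f(x_k,t_k)}$ is not yet known to be bounded, since the upper bound for $f$ is part of the conclusion.

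The same missing bound breaks two later steps. Your upgrade of $L^2$-in-time curvature smallness to uniform smallness ``using the time-derivative bound'' is circular. Time regularity of $L_kv_k$ comes from interpolating bounds on $(v_k)_s$ and $D^4v_k$, and both degenerate without the lower bound. Your Liouville step also fails as stated: $\nabla v_\infty$ is not known to be globally bounded, so ``harmonic with bounded gradient, hence affine'' is unjustified.

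\textbf{How the paper obtains the lower bound.} It uses the upper bound $v_k\le C_\rho$ together with $\int_{-a}^0\kappa_k^2\,ds=\int_{t_k-a\tau_k}^{t_k}\|R_f\|_\infty^2\,dt\to0$. These give $\|L_kv_k\|_{L^2([-a,0];L^{2n}(B_\rho))}\to0$.
\begin{itemize}
\item Call a time good if $\|L_kv_k\|_{L^{2n}(B_{2\rho})}\le1$; the bad set has measure tending to zero.
\item At a good time, solve $L_k\psi_k=L_kv_k$ on $B_{2\rho}$ with zero boundary values. The embedding $W^{2,2n}\subset L^\infty$ bounds $\psi_k$.
\item Apply the elliptic Harnack inequality to the positive $L_k$-harmonic function $C_\rho-v_k+\psi_k$. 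Its value at the origin is controlled by the center anchor, so $v_k\ge-C_\rho$ on $B_\rho$ at good times.
\item Every time in $[-a,0]$ is within $o(1)$ of a good time, and $e^{4(\sigma+1/2)v_k}$ is uniformly Lipschitz in $s$. So the positive lower bound for $e^{4(\sigma+1/2)v_k}$ propagates to the whole closed interval.
\end{itemize}
Only then does Lemma~\ref{closedtop} apply. The limit $F$ satisfies $\Delta F=0$, $F_s=0$ and $e^{\sigma F(y)}\le1+\sigma|y|$. A one-sided Liouville argument then makes $F$ constant: apply the interior gradient estimate to the positive harmonic function $\sigma^{-1}\log(1+\sigma(|x|+\rho))-F$ on $B_\rho(x)$ and let $\rho\to\infty$. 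This contradicts the normalization at the origin.

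\textbf{Other remarks.} The selection step you flag as the main obstacle is not one. Taking $t_k$ to be the first time $Q$ reaches a level $P_k$ gives $Q\le P_k$ on all of $M\times[t_0,t_k]$, hence $Q\le1$ on the whole rescaled domain. What does need proof, and is only asserted in your sketch, is that $\tau_k\to0$ and $\varepsilon_k\to0$. The paper gets $\tau_k\to0$ by integrating $|\partial_te^{4(\sigma+1/2)f}|\le4(\sigma+1/2)P_k^4$ in time. It gets $\varepsilon_k\to0$ by comparing the Lipschitz bound on $e^{\sigma f}$ with $\int_Me^{nf}\,d\mu=1$.
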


\begin{proof}
Fix $\sigma>0$. Suppose that the expression on the left of \eqref{weightedbound} is unbounded. Choose levels $P_k\to\infty$ larger than the maximum of the left-hand
side at $t_0$. Let $t_k$ be the first time at which that expression reaches
$P_k$, and choose $x_k$ where the level is attained. The expression is at
most $P_k$ on $M\times[t_0,t_k]$. Smoothness on every earlier compact
interval gives $t_k\to T$. Set
\begin{equation}\label{scales}
  r_k=P_k^{-1}e^{\sigma f(x_k,t_k)},\qquad
 \tau_k=e^{2f(x_k,t_k)}r_k^4=P_k^{-4}e^{4(\sigma+1/2) f(x_k,t_k)}.
\end{equation}
We verify that both scales tend to zero before rescaling. The temporal
summand gives $|\partial_te^{4(\sigma+1/2)f}|\le4(\sigma+1/2)P_k^4$
up to time $t_k$. On every fixed interval $[t_0,t']$, with $t'<T$,
smoothness gives a bound independent of $k$. Integration at $x_k$ yields
$$
e^{4(\sigma+1/2)f(x_k,t_k)}
\le C_{t'}+4(\sigma+1/2)P_k^4(t_k-t')
$$
for large $k$. Divide by $P_k^4$ and send $k\to\infty$ to obtain
$\limsup_k\tau_k\le4(\sigma+1/2)(T-t')$.
Sending $t'\uparrow T$ proves $\tau_k\to0$.

Take $a=1/(8(\sigma+1/2))$. On the physical interval $[t_k-a\tau_k,t_k]$, which lies after $t_0$ for large $k$, the temporal estimate at $x_k$ gives
\begin{equation}\label{anchor}
 \frac12\leq e^{4(\sigma+1/2)[f(x_k,t_k+\tau_ks)-f(x_k,t_k)]}\leq\frac32,
 \qquad -a\leq s\leq0.
\end{equation}
The spatial scale can be checked directly from volume normalization. At time $t_k$, $|\nabla e^{\sigma f}|\le \sigma P_k$, $e^{\sigma f(x_k,t_k)}=P_kr_k$, and $\int_M(e^{\sigma f})^{n/\sigma}d\mu=1$. For a fixed background radius $\rho_0>0$, use the ball of radius $\min(\rho_0,r_k/(2\sigma))$. The ball-volume lower bound and integration along geodesics give
$$
1\ge c\,\min(\rho_0,r_k/(2\sigma))^{2n}(P_kr_k/2)^{n/\sigma}.
$$
If a subsequence had $r_k\ge\varepsilon>0$, the right-hand side would diverge. Thus $r_k\to0$; eventually $\min(\rho_0,r_k/(2\sigma))=r_k/(2\sigma)$, and the same inequality gives $r_k\le C_\sigma P_k^{-1/(2\sigma+1)}$. This argument uses a pointwise integral lower bound and applies also when $n/\sigma<1$.

Choose $\gamma$-orthonormal identifications $T_{x_k}M\simeq\R^{2n}$
and exponential coordinates $\Phi_k(y)=\exp^\gamma_{x_k}(r_ky)$. Every
fixed ball is in their domain for large $k$. Define
$$
 v_k(y,s)=f(\Phi_k(y),t_k+\tau_ks)-f(x_k,t_k).
$$
The rescaled backgrounds are $\gamma_k=r_k^{-2}\Phi_k^*\gamma$, $d\mu_k=r_k^{-2n}\Phi_k^*d\mu$, $L_k=r_k^2\Phi_k^*L$, and $R_k=r_k^2R\circ\Phi_k$. Their coefficients converge smoothly on every fixed ball to the Euclidean backgrounds. The adjoint of $L_k$ relative to $d\mu_k$ is exactly $r_k^2\Phi_k^*L^*$. The equation is \eqref{rescaledflow} with
$$
 \lambda_k(s)=(n-2)\tau_kE(f(t_k+\tau_ks)),\qquad
 \|\lambda_k\|_\infty\leq(n-2)E(f(t_0))\tau_k\longrightarrow0.
$$
The past maximum bound and \eqref{scales} give
\begin{equation}\label{scaledbound}
 e^{\sigma v_k}\sum_{j=1}^4|\nabla_{\gamma_k}^jv_k|^{1/j}
       +e^{(\sigma+1/2) v_k}|(v_k)_s|^{1/4}\leq1,
\end{equation}
with equality at $(0,0)$ and $v_k(0,0)=0$. In particular,
\begin{equation}\label{transformedlip}
 |\nabla_{\gamma_k}e^{\sigma v_k}|\leq \sigma,\qquad
 |\partial_se^{4(\sigma+1/2) v_k}|\leq4(\sigma+1/2).
\end{equation}
The center bounds \eqref{anchor} and the spatial Lipschitz bound imply $v_k\leq C_{\rho,\sigma}$ on $B_\rho\times[-a,0]$ for every fixed $\rho$. This is an upper bound on the entire ball.

The rescaled curvature satisfies
$$
\|e^{-v_k}(R_k-nL_kv_k)\|_{L^\infty(B_\rho)}
\le e^{f(x_k,t_k)}r_k^2\|R_f(t_k+\tau_ks)\|_\infty
=\tau_k^{1/2}\|R_f(t_k+\tau_ks)\|_\infty.
$$
Its squared time integral on $[-a,0]$ tends to zero, because $\tau_k\to0$ and the original squared curvature supremum is integrable. The upper bound for $v_k$, the bounded volume of fixed rescaled balls, and $R_k\to0$ give
\begin{equation}\label{ellipticsmall}
\|L_kv_k\|_{L^2([-a,0];L^{2n}(B_\rho,d\mu_k))}\longrightarrow0
\quad\text{for every fixed }\rho.
\end{equation}

We next obtain the lower bound on the same closed time interval. Fix $\rho>0$ and call $s$ good when $\|L_kv_k(\cdot,s)\|_{L^{2n}(B_{2\rho},d\mu_k)}\leq1$. By \eqref{ellipticsmall}, the Lebesgue measure
$$
 \left|\left\{s\in[-a,0]:
 \|L_kv_k(\cdot,s)\|_{L^{2n}(B_{2\rho},d\mu_k)}>1\right\}\right|
$$
of bad times tends to zero. At a good time solve $L_k\psi_k=L_kv_k$ on $B_{2\rho}$ with zero Dirichlet data. The Dirichlet estimate for the Laplacian on a fixed smooth ball is
$$
 \|\psi\|_{W^{2,2n}(B_{2\rho})}
 \leq C_\rho\|\tfrac12\Delta_{\R^{2n}}\psi\|_{L^{2n}(B_{2\rho})},
 \qquad \psi\in W^{2,2n}\cap W_0^{1,2n};
$$
existence and this estimate follow from \cite[Theorem 9.15 and Lemma 9.17]{GilbargTrudinger01}. The domain is fixed and smooth, $1<2n<\infty$, and its Dirichlet Laplacian has zero kernel. Since $L_k-\tfrac12\Delta_{\R^{2n}}$ tends to zero in the corresponding operator norm, a Neumann series gives the same bound for $L_k$, uniformly for large $k$. The measures $d\mu_k$ are uniformly comparable to Lebesgue measure, and $2n>n$ gives $\|\psi_k\|_\infty\leq C_\rho$.

For a constant $C_{\rho,\sigma}$ large enough, $C_{\rho,\sigma}-v_k+\psi_k$ is positive and $L_k$-harmonic at every good time, uniformly for large $k$. The elliptic Harnack inequality \cite[Corollary 9.25]{GilbargTrudinger01} applies with common ellipticity and drift bounds. Its value at zero is bounded using \eqref{anchor}, so it gives $v_k\geq-C_{\rho,\sigma}$ on $B_\rho$ at every good time. In particular $e^{4(\sigma+1/2) v_k}\geq\delta_\rho>0$ there.

Every time of the closed interval $[-a,0]$ is within twice the displayed measure of a good time for large $k$: otherwise its intersection with a one-sided interval of that radius would contain more bad times than their total measure. This also applies at the endpoints. If that measure is zero, good times can instead be chosen arbitrarily close. The same good time works for all points in $B_\rho$. By the second bound in \eqref{transformedlip}, the change in $e^{4(\sigma+1/2)v_k}$ between these times is at most $8(\sigma+1/2)$ times the displayed measure. For large $k$ this is at most $\delta_\rho/2$, so $e^{4(\sigma+1/2)v_k}\ge\delta_\rho/2$ everywhere on $B_\rho\times[-a,0]$. Thus $v_k$ has two-sided amplitude bounds on every fixed ball throughout the closed interval.

These bounds and \eqref{scaledbound} give all hypotheses of Lemma~\ref{closedtop}. A diagonal subsequence converges through fourth spatial and first time derivatives, including the top, to a function $F$ on $\R^{2n}\times[-a/2,0]$. Equation \eqref{ellipticsmall} gives $\tfrac12\Delta_{\R^{2n}}F=0$, first inside the time interval and then at the endpoints by continuity. Passing to the actual equation \eqref{rescaledflow} yields $F_s=0$, since $R_k\to0$ and $\lambda_k\to0$. The normalization of \eqref{scaledbound} survives at $(0,0)$.

At time zero, \eqref{transformedlip} gives $e^{\sigma F(y)}\leq1+\sigma|y|$ and $F(0)=0$. For a fixed $x$ and large $\rho$ the function
$$
 \sigma^{-1}\log(1+\sigma(|x|+\rho))-F(y)
$$
is nonnegative and harmonic on $B_\rho(x)$. Adding $1$ if necessary makes it positive. The interior gradient estimate for a positive harmonic function, obtained from the Poisson representation and Harnack inequality, bounds $|DF(x)|$ by $C/\rho$ times its value at $x$, which tends to zero as $\rho\to\infty$. Thus $F$ is constant, and $F_s=0$. All terms in the equality in \eqref{scaledbound} vanish, a contradiction. This proves \eqref{weightedbound} for each fixed $\sigma>0$.

To obtain the upper bound for $f$, take $\sigma=1/2$. Then $|\nabla e^{f/2}|\leq C$. Normalization gives $\int_M(e^{f/2})^{2n}\dd\mu=1$, so $e^{f/2}\leq1$ at some point of each time slice. Integrating the spatial Lipschitz bound along a minimizing geodesic gives a uniform global upper bound for $e^{f/2}$ and hence for $f$.
\end{proof}

\subsection{Excluding terminal zeros and restarting}
\begin{lemma}\label{negativemoments}
If $\int_{t_0}^T\|R_f(t)\|_\infty^2\dd t<\infty$, then, for every fixed $s>0$,
\begin{equation}\label{negativebound}
 \sup_{t_0\leq t<T}\int_Me^{-sf(t)}\dd\mu<\infty.
\end{equation}
\end{lemma}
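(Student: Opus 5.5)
The plan is a Grönwall argument for $Y_s(t)=\int_Me^{-sf(t)}\dd\mu$ on $[t_0,T)$, with a coefficient that is integrable in time because of the hypothesis. The key preliminary input is the pointwise inverse bound \eqref{inverseinfty}, $\|e^{-f(t)}\|_\infty\le C(1+\|R_f(t)\|_\infty)$. This is not uniform in time, but combined with the hypothesis it gives $\int_{t_0}^T\|e^{-f}\|_\infty^2\dd t<\infty$. So every time-dependent coefficient will have the form $C(1+\|R_f\|_\infty^2)$, or be dominated by it, and hence be integrable on $[t_0,T)$. Since $Y_s(t_0)$ is finite by smoothness, Grönwall then gives \eqref{negativebound}.

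\emph{Step 1: differentiate $Y_s$ and substitute \eqref{flow}.} This gives $Y_s'=-s\int_Me^{-sf}f_t\dd\mu$, which splits into three parts.

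The last part of the flow contributes $-s(n-2)E(f)Y_s$. It is bounded by $s(n-2)E(f(t_0))Y_s$, since the energy decreases by \eqref{dissipation}.

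The middle part contributes $s(n-2)\int_Me^{-sf}R_f^2\dd\mu$, using $e^{-2f}(R-nLf)^2=R_f^2$. It is at most $s(n-2)\|R_f\|_\infty^2Y_s$.

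The adjoint part is moved onto $L$ with respect to the fixed measure $d\mu$:
$$
-2ns\int_Me^{(n-2)f}(R-nLf)\,L\bigl(e^{-(s+n)f}\bigr)\dd\mu .
$$
Using $L\Psi(f)=\Psi'(f)Lf+\tfrac12\Psi''(f)|\nabla f|_\gamma^2$, $R-nLf=e^fR_f$ and $nLf=R-e^fR_f$, this integrand is bounded by
$$
C\,|R_f|\,e^{-(s+1)f}\bigl(|R|+e^f|R_f|+|\nabla f|_\gamma^2\bigr).
$$
The first two terms are at most $C\|R_f\|_\infty\bigl(\|e^{-f}\|_\infty+\|R_f\|_\infty\bigr)Y_s\le C(1+\|R_f\|_\infty^2)Y_s$, by \eqref{inverseinfty}.

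\emph{Step 2: the gradient term, which I expect to be the main obstacle.} The weighted bounds of Proposition~\ref{weighted} only give $|\nabla f|^2\le C_\sigma e^{-2\sigma f}$. Using them would produce a factor $\|R_f\|_\infty\|e^{-f}\|_\infty^{1+2\sigma}$, which is not known to be time-integrable. Instead I will integrate the gradient term by parts in space. With $q=s+1$ and $L=\tfrac12\Delta_\gamma+b\cdot\nabla$,
$$
\int_Me^{-qf}|\nabla f|^2\dd\mu=\frac1q\int_Me^{-qf}\Delta_\gamma f\dd\mu
=\frac2q\int_Me^{-qf}\bigl(Lf-b\cdot\nabla f\bigr)\dd\mu .
$$
The drift term is handled by Young's inequality: $|e^{-qf}b\cdot\nabla f|\le \varepsilon e^{-qf}|\nabla f|^2+C_\varepsilon e^{-qf}$, and the first piece is absorbed into the left side. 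Then $Lf=(R-e^fR_f)/n$ gives
$$
\int_Me^{-(s+1)f}|\nabla f|^2\dd\mu\le C\bigl(\|e^{-f}\|_\infty+\|R_f\|_\infty\bigr)Y_s .
$$
Since $\|R_f\|_\infty$ appears outside the integral as a supremum, the gradient contribution is at most $C\|R_f\|_\infty(\|e^{-f}\|_\infty+\|R_f\|_\infty)Y_s\le C(1+\|R_f\|_\infty^2)Y_s$. This is again integrable in time.

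\emph{Step 3: conclude.} Collecting the three parts gives $Y_s'\le C(1+\|R_f(t)\|_\infty^2)Y_s$ at every time, which is legitimate since $f$ is smooth on each compact subinterval. Grönwall on $[t_0,t]$ yields
$$
Y_s(t)\le Y_s(t_0)\exp\!\Bigl(C\int_{t_0}^T(1+\|R_f\|_\infty^2)\dd t\Bigr),
$$
uniformly for $t<T$.

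The only delicate bookkeeping is to keep every occurrence of $e^{-f}$ either inside $Y_s$ or at first power outside. That is exactly why Step 2 integrates by parts rather than using the pointwise gradient weight from Proposition~\ref{weighted}.
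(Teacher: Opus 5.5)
Your proposal is correct and follows essentially the paper's argument. You move $L^*$ onto $e^{-(n+s)f}$, control the gradient term by a spatial integration by parts at exponent $s+1$, and close with \eqref{inverseinfty} and Gr\"onwall; the paper integrates $L(e^{-(s+1)f})$ against $d\mu$, keeping $L^*1$, where you use the Laplace--Beltrami divergence theorem and absorb the drift by Young, and the paper merges the two square terms into a nonpositive term where you bound both by $C\|R_f\|_\infty^2\int_M e^{-sf}\,d\mu$ — both harmless differences.
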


\begin{proof}
Differentiate the integral and insert \eqref{eq:flow}. In its adjoint
term, move $L^*$ onto $e^{-(n+s)f}$. The chain rule gives
$$
L(e^{-(n+s)f})=-(n+s)e^{-(n+s)f}Lf
+\frac{(n+s)^2}{2}e^{-(n+s)f}|\nabla f|_\gamma^2.
$$
Substitute $Lf=(R-(R-nLf))/n$. Combining the resulting square term
with the reaction term in the flow gives
\begin{equation}\label{negativeidentity}
\begin{split}
 \frac{d}{dt}\int_Me^{-sf}\dd\mu
 ={}&-s(n+2+2s)\int_Me^{-(s+2)f}(R-nLf)^2\dd\mu\\
 &-ns(n+s)^2\int_Me^{-(s+2)f}(R-nLf)|\nabla f|_\gamma^2\dd\mu\\
 &+2s(n+s)\int_Me^{-(s+2)f}R(R-nLf)\dd\mu
       -s(n-2)E(f)\int_Me^{-sf}\dd\mu.
\end{split}
\end{equation}
The mixed gradient term is estimated without assigning it a sign. For every $k>0$, integration of the chain rule against the fixed measure yields
$$
 \frac{k^2}{2}\int_Me^{-kf}|\nabla f|_\gamma^2\dd\mu
 =\int_Me^{-kf}L^*1\dd\mu
  +\frac{k}{n}\int_Me^{-kf}(R-(R-nLf))\dd\mu.
$$
Because $L^*1$ and $R$ are fixed bounded functions and $|(R-nLf)|\leq\|R_f\|_\infty e^f$, take $k=s+1$ to obtain
\begin{align*}
 \left|\int_Me^{-(s+2)f}(R-nLf)|\nabla f|_\gamma^2\dd\mu\right|
 &\leq C_s\|R_f\|_\infty
 \left(\|e^{-f}\|_\infty+\|R_f\|_\infty\right)
 \int_Me^{-sf}\dd\mu\\
 &\leq C_s(1+\|R_f\|_\infty^2)\int_Me^{-sf}\dd\mu,
\end{align*}
where \eqref{inverseinfty} was used. The $R(R-nLf)$ integral in \eqref{negativeidentity} has the same bound. The first and last terms of \eqref{negativeidentity} are nonpositive. Gronwall's inequality gives \eqref{negativebound}. In particular, the non-Gauduchon term $L^*1$ has been retained throughout this calculation.
\end{proof}

\begin{proof}[Proof of Theorem~\ref{thm:continuation}]
Fix $0<t_0<T$. Proposition~\ref{weighted} and Lemma~\ref{negativemoments} apply by \eqref{eq:endpoint}.

Use \eqref{weightedbound} with $\sigma=1/2$. The functions $e^{f/2}$ are uniformly Lipschitz in space, and $|\partial_te^{4f}|\leq C$. Thus $e^{4f(t)}$ has a uniform nonnegative continuous limit as $t\uparrow T$, and $e^{f(t)/2}$ converges uniformly to a spatially Lipschitz function $V_T$. Normalization passes to the limit and gives $\int_MV_T^{2n}\dd\mu=1$. By \eqref{negativebound} with $s=n$ and Fatou's lemma,
$$
 \int_MV_T^{-2n}\dd\mu<\infty,
$$
with the integrand assigned the value $+\infty$ at zeros. If $V_T(x_0)=0$, its Lipschitz bound gives $0\leq V_T(x)\leq C d_\gamma(x,x_0)$. Unless $V_T$ is identically zero, which normalization excludes, the preceding inverse integral then dominates a positive multiple of $\int_0^\varepsilon r^{2n-1}r^{-2n}\dd r=\infty$. This contradiction shows $\min_MV_T>0$.

Uniform convergence gives a positive lower bound for $e^{f/2}$ near
$T$. Proposition~\ref{weighted} gives the upper bound. Thus $f$ is bounded
on both sides, and \eqref{weightedbound} with $\sigma=1/2$ bounds
$|\nabla f|_\gamma$ uniformly on a terminal interval.

Choose $B,H<\infty$ so that $\|f(t)\|_\infty\le B$ and
$|f(x,t)-f(y,t)|\le Hd_\gamma(x,y)$ there. Let $r_0,\delta,c$ be the
constants in Proposition~\ref{prop:local} for this $B$, and choose one
$0<r\le r_0$ with $Hr\le\delta$. Restart that proposition at a time
$t_1<T$ so late that $T-t_1<cr^4$. Smooth uniqueness identifies the
restarted solution with the original one on their overlap. It therefore
extends the solution smoothly across $T$ in the original time, with a
positive conformal factor and volume one.
\end{proof}

\section{Global existence and convergence}\label{sec:convergence}

We use the analytic gradient inequality at a smooth constrained critical metric.

\begin{lemma}\label{lem:ls}
Let $f_\infty$ be a smooth normalized solution of \eqref{eq:stationary} and fix $0<\alpha<1$. There are $C<\infty$, $\theta\in[1/2,1)$ and a $C^{4,\alpha}$ neighborhood of $f_\infty$ among normalized functions such that
$$
|E(f)-E(f_\infty)|^\theta\le C\|\grad E(f)\|_{L^2(d\mu_f)}
$$
for every smooth $f$ in that neighborhood. The gradient is taken with respect to $\langle\cdot,\cdot\rangle_f$.
\end{lemma}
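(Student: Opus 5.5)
The plan is to reduce the statement to the abstract analytic \L{}ojasiewicz--Simon gradient inequality of Feehan and Maridakis~\cite[Theorem~3]{FM20}. We apply it in a fixed Banach chart around $f_\infty$ and then transfer the resulting inequality to the moving inner product $\langle\cdot,\cdot\rangle_f$.

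First we set up the chart. Let $H=L^2(d\mu_{f_\infty})$ and let
$$
X=\Bigl\{u\in C^{4,\alpha}(M):\int_Mu\,e^{nf_\infty}\dd\mu=0\Bigr\},\qquad
Y=\Bigl\{w\in C^{0,\alpha}(M):\int_Mw\,e^{nf_\infty}\dd\mu=0\Bigr\}.
$$
Define the chart
$$
\Phi(u)=f_\infty+u-\frac1n\log\int_Me^{n(f_\infty+u)}\dd\mu,
$$
which maps a neighborhood of $0\in X$ onto a $C^{4,\alpha}$ neighborhood of $f_\infty$ among normalized functions. Set $\mathcal E=E\circ\Phi$. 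Since \eqref{eq:energy} is built from $e^{(n-2)f}$, $Lf$, polynomial operations and an integral, $\mathcal E$ is real analytic on a neighborhood of $0$ in $X$. Its differential is $D\mathcal E(u)[v]=DE(\Phi(u))[D\Phi(u)v]$. We represent it through the $H$ pairing as $D\mathcal E(u)[v]=\langle\mathcal M(u),v\rangle_H$ with $\mathcal M(u)\in Y$. By the variation formula of Section~\ref{sec:variation}, $\mathcal M(u)$ is an explicit fourth-order expression, so $\mathcal M:X\to Y$ is analytic. Its zero $u=0$ corresponds to \eqref{eq:stationary}.

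Second, and this is where the hypotheses of~\cite[Theorem~3]{FM20} must be verified, we show that the Hessian $\mathcal M'(0):X\to Y$ is Fredholm of index zero. Its principal part is
$$
2n^2e^{-nf_\infty}L^*\bigl(e^{(n-2)f_\infty}L\,\cdot\bigr),
$$
followed by projection onto the $e^{nf_\infty}\dd\mu$ mean-zero functions. This operator is strongly elliptic of order four; its principal symbol has the form displayed in the proof of Lemma~\ref{closedtop}. Schauder theory on the closed manifold makes it Fredholm $C^{4,\alpha}\to C^{0,\alpha}$. It is symmetric with respect to $H$, being the second variation of $\mathcal E$. By elliptic regularity, its cokernel is therefore identified with its kernel, so the index is zero. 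The Feehan--Maridakis theorem, applied with $X\hookrightarrow H$ continuous and dense and $\mathcal M$ the $H$-gradient map, then yields $\theta\in[1/2,1)$, $C$ and a neighborhood of $0$ with
$$
|\mathcal E(u)-\mathcal E(0)|^\theta\le C\|\mathcal M(u)\|_{L^2(d\mu_{f_\infty})}.
$$

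Finally we transfer the inequality. Write $f=\Phi(u)$. By definition of the constrained gradient, $DE(f)[h]=\langle\grad E(f),h\rangle_f$ for tangent $h$, and $D\Phi(u)v$ is tangent at $f$. Hence
$$
\langle\mathcal M(u),v\rangle_{H}=\langle\grad E(f),D\Phi(u)v\rangle_f .
$$
This shows that $\mathcal M(u)$ is the $H$-adjoint of $D\Phi(u)$ applied to $\grad E(f)$. The differential $D\Phi(u)=v\mapsto v-\int_Mv\,e^{nf}\dd\mu$ is bounded on $L^2$ uniformly near $0$. The measures $d\mu_f$ and $d\mu_{f_\infty}$ are uniformly comparable since $\|f-f_\infty\|_\infty$ is small. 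Together these give $\|\mathcal M(u)\|_{L^2(d\mu_{f_\infty})}\le C\|\grad E(f)\|_{L^2(d\mu_f)}$, which proves the lemma.

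The expected main obstacle is the Fredholm and gradient-map setup in the second step. The constraint and the non-self-adjoint operator $L$ must be handled so that $\mathcal M$ is genuinely the $H$-gradient of $\mathcal E$, making its linearization symmetric and hence of index zero. If the projection with the $f$-dependent constraint causes trouble, I would instead use the simpler chart $u\mapsto f_\infty+u+c(u)$, with the scalar $c(u)$ fixed by normalization through the implicit function theorem.
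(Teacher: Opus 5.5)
Your outline matches the paper's: both apply \cite[Theorem~3]{FM20} in a chart of the volume constraint near $f_\infty$, get index zero from a symmetric strongly elliptic fourth-order Hessian, and transfer the inequality back to $\langle\cdot,\cdot\rangle_f$. The only structural difference is the chart.

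The paper passes to $z=e^{nf/2}$. The constraint then becomes the unit sphere of the fixed space $L^2(d\mu)$, and the moving metric on variations of $f$ is exactly $4/n^2$ times the fixed metric on variations of $z$. So the transfer is an identity, and this is the same normalization used later in the proof of Theorem~\ref{thm:main}. The price is a radial-projection chart whose finite-rank terms need the compression argument for the index.

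Your affine chart with $H=L^2(d\mu_{f_\infty})$ works equally well and is in some ways simpler. A direct computation gives $\mathcal M(u)=e^{n(f-f_\infty)}\grad E(f)$, which already has $\mu_{f_\infty}$-mean zero. The transfer is therefore just $\|\mathcal M(u)\|_{L^2(d\mu_{f_\infty})}^2\le e^{n\|f-f_\infty\|_\infty}\|\grad E(f)\|_{L^2(d\mu_f)}^2$. Moreover, $\mathcal M'(0)$ is the $\mu_{f_\infty}$-symmetric linearization of the unconstrained gradient, restricted to mean-zero functions, which it preserves.

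What is missing is the part of \cite[Theorem~3]{FM20} that actually produces the $L^2$ norm. With only the Fredholm property of $\mathcal M'(0):X\to Y$, the abstract inequality you can get is $|\mathcal E(u)-\mathcal E(0)|^\theta\le C\|\mathcal M(u)\|_{C^{0,\alpha}}$. That does not imply the lemma.

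To get the $L^2$ version, the theorem needs a second, weaker pair of spaces with compatible embeddings and pairings. Here these would be $\mathcal W=H^4\cap\{\int u\,d\mu_{f_\infty}=0\}$ and $\mathcal K=L^2\cap\{\int w\,d\mu_{f_\infty}=0\}$. It also needs two further hypotheses:
\begin{itemize}
\item each $\mathcal M'(u)$ extends to a bounded operator $\mathcal W\to\mathcal K$ depending continuously on $u\in X$;
\item this extension at $u=0$ is Fredholm of index zero.
\end{itemize}
Density of $X$ in $H$, which is what you invoke, is not a substitute for these.

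The paper verifies both points explicitly. In your chart they are routine. $\mathcal M'(u)$ is a fourth-order operator whose coefficients involve at most four derivatives of $f\in C^{4,\alpha}$, plus finite-rank terms from $D\Phi(u)$. Hence it is bounded $H^4\to L^2$ and depends continuously on $u$. At $u=0$, the same symmetry-plus-ellipticity argument gives index zero from $H^4$ to $L^2$. Once you add this verification, your proof is complete.
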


\begin{proof}
Use the variable $z=e^{nf/2}$, for which the constraint is $\|z\|_2=1$.
Let $\mathbb S$ be the unit sphere in $L^2(d\mu)$, and let
$\nabla_{\mathbb S}$ denote the gradient for the induced $L^2$ metric.
The energy becomes the real analytic functional
$$
E(\tfrac2n\log z)=\int_Mz^{2(n-2)/n}(R-2L\log z)^2\dd\mu
$$
on the positive open subset of $C^{4,\alpha}(M)$. The moving metric on variations of $f$ is $4/n^2$ times the fixed $L^2$ metric on variations of $z$. Along \eqref{eq:flow}, therefore,
\begin{equation}\label{eq:sphere}
z_t=-\frac{n^2}{4}\nabla_{\mathbb S}E(\tfrac2n\log z),\qquad
\|\nabla_{\mathbb S}E(\tfrac2n\log z)\|_2=\frac2n\|\grad E(f)\|_{L^2(d\mu_f)}.
\end{equation}
Use the sphere chart
$$
\Phi(\eta)=\frac{e^{nf_\infty/2}+\eta}{\|e^{nf_\infty/2}+\eta\|_2},\qquad \eta\perp e^{nf_\infty/2}.
$$
We apply \cite[Theorem 3]{FM20} to $\eta\mapsto E(\tfrac2n\log\Phi(\eta))$. This theorem applies to an analytic gradient map between a Banach domain and its gradient space when its derivatives extend continuously between a weaker domain and a Hilbert gradient space, the embeddings and dual pairings agree, and the two Hessian realizations at the critical point are Fredholm of index zero. In the present case these spaces are
$$
C^{4,\alpha}\cap (e^{nf_\infty/2})^\perp,\quad C^{0,\alpha}\cap (e^{nf_\infty/2})^\perp,\quad
H^4\cap (e^{nf_\infty/2})^\perp,\quad L^2\cap (e^{nf_\infty/2})^\perp.
$$
All pairings and orthogonal complements are in fixed $L^2(d\mu)$. Their embeddings commute, and the induced pairing is definite since $\int_Mu^2\dd\mu>0$ for $u\ne0$.

The chart gradient is analytic from the first space to the second. Its derivative is a fourth-order linear differential operator with coefficients continuous in the $C^{4,\alpha}$ variable, plus smooth multiplier and finite-rank terms from the chart. It therefore extends continuously as $H^4\cap (e^{nf_\infty/2})^\perp\to L^2\cap (e^{nf_\infty/2})^\perp$. At the critical point its Hessian is symmetric, with principal part
$$
8L^*(e^{-2f_\infty}L).
$$
Indeed, the second-order part of the variation of $R-2L\log z$ is $-2z^{-1}L\eta$, so the principal term of the second variation is $8\int_Mz^{-4/n}(L\xi)(L\eta)\dd\mu$.

The full symmetric elliptic Hessian, including the constraint multiplier, is Fredholm of index zero from $H^4$ to $L^2$. Its compression to $(e^{nf_\infty/2})^\perp$, extended by the identity on $\mathbb R e^{nf_\infty/2}$, differs from it by a finite-rank operator. Indeed, symmetry moves the Hessian from a variable argument to the fixed smooth function $e^{nf_\infty/2}$ in each discarded rank-one functional. The compressed operator preserves the orthogonal splitting. Its restriction to the perpendicular subspace is therefore Fredholm of index zero. Elliptic regularity identifies the same smooth kernel and cokernel in $C^{4,\alpha}\to C^{0,\alpha}$, so the index is zero there as well. This verifies the remaining hypotheses of the cited theorem.

It follows that $|E(\tfrac2n\log z)-E(f_\infty)|^\theta$ is bounded by the $L^2$ norm of the chart gradient. The derivative of $\Phi$ and its $L^2$ adjoint are bounded on a smaller chart, so that norm is at most $C\|\nabla_{\mathbb S}E(\tfrac2n\log z)\|_2$. Equation \eqref{eq:sphere} proves the lemma.
\end{proof}

\begin{proof}[Proof of Theorem~\ref{thm:main}]
Proposition~\ref{prop:elliptic} gives uniform constants $B,H$ and $\alpha>0$ such that $\|f(t)\|_\infty\le B$ and $|f(x,t)-f(y,t)|\le Hd(x,y)^\alpha$ for $t<T_{\max}$. Choose one $0<r\le r_0$ with $Hr^\alpha\le\delta$, where $r_0,\delta$ are from Proposition~\ref{prop:local}. The latter gives the same lifetime $cr^4$ from every time slice. Smooth uniqueness identifies each restarted solution with the given one. If $T_{\max}<\infty$, a restart at $t_0>T_{\max}-cr^4/2$ extends it past $T_{\max}$, a contradiction. Hence $T_{\max}=\infty$.

For $t\ge cr^4$ restart at $t-3 cr^4/4$. Its terminal-half estimates imply
\begin{equation}\label{eq:smooth}
\sup_{t\ge cr^4}\|f(t)\|_{C^k}\le C_k\qquad(k\ge0).
\end{equation}
In particular, the tail of the trajectory has compact closure in every $C^k$ topology and $e^f$ has uniform positive lower and upper bounds.

By \eqref{eq:dissipation}, $E(f(t))$ has a nonnegative limit and $\int_0^\infty\|f_t\|_{L^2(d\mu_f)}^2\dd t<\infty$. Choose $t_j\to\infty$ with these squared norms tending to zero. Equation \eqref{eq:smooth} supplies a subsequence converging in every $C^k$ to a smooth normalized $f_\infty$. The flow vector field is continuous under this convergence, and its $L^2$ norm tends to zero because the densities have a common positive lower bound. Thus the limiting vector field vanishes, giving \eqref{eq:stationary} and $\lim_{t\to\infty}E(f(t))=E(f_\infty)$.

In the neighborhood of Lemma~\ref{lem:ls}, if $E(f(t))>E(f_\infty)$, one has
$$
-\frac\dd{\dd t}(E(f(t))-E(f_\infty))^{1-\theta}
=\frac{(1-\theta)\|f_t\|_{L^2(d\mu_f)}^2}{(E(f(t))-E(f_\infty))^\theta}
\ge c\|f_t\|_{L^2(d\mu_f)}.
$$
If the energy equals $E(f_\infty)$ at a finite time, \eqref{eq:dissipation} makes the trajectory stationary thereafter.

It remains to ensure that the trajectory stays in this neighborhood. Put $z(t)=e^{nf(t)/2}$. Its tail has compact closure in $C^{4,\alpha}$ by \eqref{eq:smooth}, and $z(t_j)\to e^{nf_\infty/2}$. There is $\eta_0>0$ such that every point of this compact closure within $L^2$ distance $\eta_0$ of $e^{nf_\infty/2}$ lies in the chart neighborhood for the lemma. Otherwise a sequence violating this property would converge to $e^{nf_\infty/2}$ in $L^2$ and, after extraction, in $C^{4,\alpha}$, a contradiction.

Choose $j$ sufficiently large that
$$
\|z(t_j)-e^{nf_\infty/2}\|_2<\eta_0/4,\qquad
C(E(f(t_j))-E(f_\infty))^{1-\theta}<\eta_0/4.
$$
Here $C$ includes the factor $n/2$ in $\|z_t\|_2=(n/2)\|f_t\|_{L^2(d\mu_f)}$. Integrating the preceding differential inequality up to any time before a first exit from the $L^2$ ball of radius $\eta_0/2$ gives
$$
\int_{t_j}^t\|z_s\|_2\dd s\le C(E(f(t_j))-E(f_\infty))^{1-\theta}<\eta_0/4.
$$
At a first exit the triangle inequality would still give distance strictly less than $\eta_0/2$. Thus no exit occurs and $\int_{t_j}^\infty\|z_t\|_2\dd t<\infty$.

The trajectory converges in $L^2$ to $e^{nf_\infty/2}$. For any $k$, a sequence violating $C^k$ convergence has a smoothly convergent subsequence by \eqref{eq:smooth}, whose $L^2$ limit must be $e^{nf_\infty/2}$, a contradiction. The uniform positive lower bound permits taking logarithms, proving convergence of $f$ in every $C^k$.
\end{proof}

\section{Exponential convergence from small initial curvature}\label{sec:initial}

\subsection{Initial curvature and the conformal factor}

\begin{proposition}\label{prop:scalar-critical}
Suppose $R=0$ and fix $p>n$. There is a constant $C_p$, depending only on the fixed background and $p$, such that every smooth normalized $f$ with $\|R_f\|_{L^p(d\mu_f)}\le1$ satisfies
\begin{equation}\label{eq:flat-critical-entry}
\|f\|_\infty\le C_p\|R_f\|_{L^p(d\mu_f)}.
\end{equation}
\end{proposition}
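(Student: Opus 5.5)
With $R=0$ the curvature identity \eqref{eq:curvature} reads $R_f=-ne^{-f}Lf$. Write $\varepsilon=\|R_f\|_{L^p(d\mu_f)}\le1$. The plan is to reproduce the argument of Proposition~\ref{prop:elliptic}, but now keeping track of how each constant depends on $\varepsilon$. That gives linear control of the oscillation $f-\int_Mf\,d\mu$. The mean is then controlled through \eqref{eq:mean-control}.

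\emph{Step 1: a linear bound on $Lf$.} Since $\mu_f(M)=1$, Hölder gives
$$\|nLf\|_{L^n(d\mu)}=\|R_f\|_{L^n(d\mu_f)}\le\varepsilon\le1.$$
Lemma~\ref{lem:exponential} then applies with $B=1/n$. It bounds $\int_Me^{(p+n)f}\,d\mu$ by a constant depending only on the background and $p$. Next apply \eqref{eq:holder-transfer} with $F=|R_f|$. Because $e^f|R_f|=n|Lf|$, this yields
$$\|Lf\|_{L^{(p+n)/2}(d\mu)}\le C_p\varepsilon.$$
The key point is that the exponential moment enters only as a bounded multiplicative factor, while $\|F\|_{L^p(d\mu_f)}=\varepsilon$ appears to the first power.

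\emph{Step 2: oscillation and mean.} Since $p>n$, we have $q=(p+n)/2>n$, and $n$ is half the real dimension $2n$. The last estimate of Lemma~\ref{lem:hessian} combined with Sobolev embedding $W^{2,q}\hookrightarrow C^0$ therefore gives
$$\Bigl\|f-\int_Mf\,d\mu\Bigr\|_\infty\le C_p\|Lf\|_{L^q}\le C_p\varepsilon.$$
The second inequality in \eqref{eq:mean-control} bounds $|\int_Mf\,d\mu|$ by this same oscillation norm. Adding the two bounds gives $\|f\|_\infty\le 2C_p\varepsilon$, which is \eqref{eq:flat-critical-entry}.

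The only point I expect to need care is Step 1: getting linearity in $\varepsilon$ rather than merely boundedness. The danger is that a naive Hölder transfer could produce a power of $\varepsilon$ below one. That is avoided because \eqref{eq:holder-transfer} is exactly linear in $\|F\|_{L^p(d\mu_f)}$, and its second factor is uniformly bounded once the hypothesis $\varepsilon\le1$ is used in Lemma~\ref{lem:exponential}. Everything else consists of direct citations of fixed-background estimates. Those depend only on $\omega$ and $p$, as the statement requires.
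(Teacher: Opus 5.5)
Your proposal is correct and follows the paper's proof essentially step for step. Both arguments use the exact identity $n\|Lf\|_{L^n(d\mu)}=\|R_f\|_{L^n(d\mu_f)}\le1$ and Lemma~\ref{lem:exponential} to bound the exponential moment, then the transfer \eqref{eq:holder-transfer} for a bound on $\|Lf\|_{L^{(p+n)/2}}$ linear in $\|R_f\|_{L^p(d\mu_f)}$, and finish with Lemma~\ref{lem:hessian}, Sobolev embedding, and \eqref{eq:mean-control}.
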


\begin{proof}
The exact identity $n\|Lf\|_{L^n(d\mu)}=\|R_f\|_{L^n(d\mu_f)}$ gives $\|Lf\|_{L^n}\le1/n$. Lemma~\ref{lem:exponential} therefore bounds $\int_Me^{(p+n)f}\dd\mu$ independently of $f$. Formula~\eqref{eq:holder-transfer}, with $F=|R_f|$, now gives
$$
n\|Lf\|_{L^{(p+n)/2}(d\mu)}
=\|e^fR_f\|_{L^{(p+n)/2}(d\mu)}
\le C_p\|R_f\|_{L^p(d\mu_f)}.
$$
Lemma~\ref{lem:hessian} and $(p+n)/2>n$ bound the centered supremum norm of $f$ by the same right-hand side. Formula~\eqref{eq:mean-control} bounds its mean and proves~\eqref{eq:flat-critical-entry}.
\end{proof}

\begin{proposition}\label{prop:tensor-critical}
Fix $p>n$. There is a constant $C_p$, depending only on the fixed Hermitian background and $p$, such that every smooth normalized $f$ satisfying
\begin{equation}\label{eq:tensor-difference-bound}
\left\|\Ric^{\Ch}(\omega_f)-\frac{R_f}{n}\omega_f
-\Ric^{\Ch}(\omega)+\frac Rn\omega\right\|_{L^p(\omega_f,d\mu_f)}\le1
\end{equation}
obeys
\begin{equation}\label{eq:tensor-critical-entry}
\|f\|_\infty\le C_p
\left\|\Ric^{\Ch}(\omega_f)-\frac{R_f}{n}\omega_f
-\Ric^{\Ch}(\omega)+\frac Rn\omega\right\|_{L^p(\omega_f,d\mu_f)}.
\end{equation}
\end{proposition}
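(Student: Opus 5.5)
The plan is to reduce the statement to Proposition~\ref{prop:scalar-critical}. The role played there by $Lf$ will be played here by the trace-free complex Hessian of $f$, and the coercivity estimate \eqref{eq:hessian-coercivity} of Lemma~\ref{lem:hessian} replaces the scalar estimate.

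\textbf{Step 1: an exact conformal identity.} Since $\det(e^fg)=e^{nf}\det g$, we have
$$
\Ric^{\Ch}(\omega_f)=\Ric^{\Ch}(\omega)-n\sqrt{-1}\partial\bar\partial f .
$$
By \eqref{eq:curvature}, $\frac{R_f}{n}\omega_f=\frac{R-nLf}{n}\omega$. Hence the form inside the norm in \eqref{eq:tensor-difference-bound} equals $-nT$, where
$$
T=\sqrt{-1}\partial\bar\partial f-\frac{Lf}{n}\omega
$$
is exactly the operator appearing in \eqref{eq:hessian-coercivity}. The Hilbert--Schmidt norm of a real $(1,1)$-form scales as $|T|_{\omega_f}=e^{-f}|T|_\omega$. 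So with $Q$ the norm in \eqref{eq:tensor-difference-bound} and $F=e^{-f}|T|_\omega$, we get $Q=n\|F\|_{L^p(d\mu_f)}$.

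\textbf{Step 2: the critical exponent is exact.} At exponent $n$ the weights cancel:
$$
\int_M F^n\dd\mu_f=\int_M|T|_\omega^n\dd\mu .
$$
Because $\mu_f(M)=1$ and $p>n$, H\"older gives $\|T\|_{L^n(d\mu)}\le\|F\|_{L^p(d\mu_f)}\le 1/n$.

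\textbf{Step 3: an a priori bound on $Lf$ and on moments of $e^f$.} Applying \eqref{eq:hessian-coercivity} with exponent $n$ yields $\|f-\int_M f\dd\mu\|_{W^{2,n}}\le C$, and therefore $\|Lf\|_{L^n}\le C$ with $C$ depending only on the background. This is the one step not already available in the scalar case, since there the $L^n$ bound on $Lf$ was exact. With it, Lemma~\ref{lem:exponential} applies and bounds $\int_Me^{(p+n)f}\dd\mu$ by a constant depending only on $\omega$ and $p$.

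\textbf{Step 4: transfer and linear estimate.} Apply \eqref{eq:holder-transfer} to this $F$. Since $e^fF=|T|_\omega$, it gives
$$
\|T\|_{L^{(p+n)/2}(d\mu)}\le C_p\|F\|_{L^p(d\mu_f)}=\frac{C_p}{n}\,Q .
$$
Then \eqref{eq:hessian-coercivity} with exponent $(p+n)/2>n$, combined with Sobolev embedding, gives
$$
\Bigl\|f-\int_M f\dd\mu\Bigr\|_\infty\le C_pQ .
$$
Finally, \eqref{eq:mean-control} bounds the mean by the same quantity, which yields \eqref{eq:tensor-critical-entry}.

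The main point requiring care is Step 3: the exponential-integrability lemma needs an $L^n$ bound on $Lf$ that does not depend on $f$. This comes only from \eqref{eq:hessian-coercivity}, whose proof uses $n\ge2$ and the Lee-form argument for the kernel. Everything else is linear once the moment bound is in hand, so the constant is linear in $Q$, as the statement requires.
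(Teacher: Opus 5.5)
Your proposal is correct and follows essentially the same route as the paper. Both arguments rewrite the form as $-n$ times the trace-free complex Hessian. Both use the exact $L^n$ invariance and \eqref{eq:hessian-coercivity} to get a uniform $W^{2,n}$ bound, hence exponential integrability through Lemma~\ref{lem:exponential}. Both then finish with \eqref{eq:holder-transfer}, coercivity at exponent $(p+n)/2>n$ with Sobolev embedding, and \eqref{eq:mean-control}.
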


\begin{proof}
The conformal Ricci formula and~\eqref{eq:curvature} give
\begin{equation}\label{eq:tensor-identity}
\Ric^{\Ch}(\omega_f)-\frac{R_f}{n}\omega_f
-\Ric^{\Ch}(\omega)+\frac Rn\omega
=-n\left(\sqrt{-1}\partial\bar\partial f-\frac{Lf}{n}\omega\right).
\end{equation}
For every real $(1,1)$-form $T$, $|T|_{\omega_f}=e^{-f}|T|_\omega$, and therefore
$$
|T|_{\omega_f}^n d\mu_f=|T|_\omega^n d\mu.
$$
Use this identity for the form on the left of~\eqref{eq:tensor-identity}, followed by $\mu_f(M)=1$ and~\eqref{eq:tensor-difference-bound}. Lemma~\ref{lem:hessian} gives a fixed bound for the centered $W^{2,n}$ norm of $f$. In particular $\|Lf\|_{L^n}\le C$, so Lemma~\ref{lem:exponential} gives $\int_Me^{(p+n)f}\dd\mu\le C_p$.

Apply~\eqref{eq:holder-transfer} to the pointwise
$\omega_f$ norm of that same form. By~\eqref{eq:tensor-identity},
$$
n\left\|\sqrt{-1}\partial\bar\partial f-\frac{Lf}{n}\omega\right\|_{L^{(p+n)/2}(\omega,d\mu)}
\le C_p
\left\|\Ric^{\Ch}(\omega_f)-\frac{R_f}{n}\omega_f
-\Ric^{\Ch}(\omega)+\frac Rn\omega\right\|_{L^p(\omega_f,d\mu_f)},
$$
The fixed coercivity estimate~\eqref{eq:hessian-coercivity} and Sobolev embedding bound the centered supremum norm by this right-hand side. Applying~\eqref{eq:mean-control} proves~\eqref{eq:tensor-critical-entry}.
\end{proof}

When $\Ric^{\Ch}(\omega)=(R/n)\omega$, the background difference in~\eqref{eq:tensor-difference-bound} vanishes. This applies to Corollary~\ref{cor:ricci-initial}.

\subsection{Stability near a critical background}

\begin{lemma}\label{lem:abstract-stability}
Let $\mathcal B$ be a fixed self-adjoint elliptic operator of order four on a closed manifold of real dimension $d$, with positive principal symbol, preserving mean-zero functions and having spectrum at least $\eta>0$ on that subspace. Fix $s=4m>d/2+4$, where $m\ge2$ is an integer. Suppose a smooth mean-zero solution of $h_t+\mathcal Bh=N(h)$ satisfies, on a fixed small $H^s$ ball,
$$
\|N(h)\|_{H^{s-2}}\le C\|h\|_{H^s}\|h\|_{H^{s+2}}.
$$
There is a smaller fixed ball, measured by $\|\mathcal B^{s/4}h\|_2$, which the solution cannot exit while smooth. For a solution starting in that ball,
$$
\|\mathcal B^{s/4}h(t)\|_2^2
\le e^{-\eta(t-a)}\|\mathcal B^{s/4}h(a)\|_2^2
$$
for every later smooth time $t$, where $a$ is its starting time.
\end{lemma}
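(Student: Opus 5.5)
The plan is a direct energy estimate for $Y(t)=\|\mathcal B^{s/4}h(t)\|_2^2=\langle\mathcal B^{2m}h,h\rangle$, closed by the assumed quadratic bound on $N$ together with the spectral gap. Because $\mathcal B$ is self-adjoint, elliptic, and preserves the mean-zero subspace, it has a discrete spectrum there contained in $[\eta,\infty)$. I will therefore define fractional powers $\mathcal B^{\beta}$ on mean-zero functions by the spectral theorem.

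First I would record the norm equivalences. Set
$$
Z(t)=\|\mathcal B^{s/4+1/2}h(t)\|_2^2=\langle\mathcal B^{2m+1}h,h\rangle .
$$
By elliptic regularity for the positive fourth-order operator, combined with the gap that removes the kernel on mean-zero functions, the following hold for mean-zero $u$:
$$
\|u\|_{H^s}\le C\|\mathcal B^{s/4}u\|_2,\qquad
\|u\|_{H^{s+2}}\le C\|\mathcal B^{s/4+1/2}u\|_2,\qquad
\|\mathcal B^{s/4-1/2}u\|_2\le C\|u\|_{H^{s-2}} .
$$
The last inequality is $L^2$-boundedness of an operator of order $s-2$ composed with a Bessel potential. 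I would also expand in eigenfunctions, where each eigenvalue $\lambda\ge\eta$ satisfies $\lambda^{2m+1}\ge\eta\lambda^{2m}$. This gives the key gap inequality
$$
Z\ge\eta Y .
$$
Note that this is $\eta$ itself, not $\eta^{1/2}$, because $Z$ and $Y$ differ by a full factor of $\mathcal B$, not half of one.

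Next I would differentiate at a smooth time. The function $h_t$ is mean-zero because $h$ is, so self-adjointness gives
$$
Y'=2\langle\mathcal B^{2m}h,h_t\rangle=-2Z+2\langle\mathcal B^{2m}h,N(h)\rangle .
$$
I split the nonlinear pairing as $\langle\mathcal B^{2m+1/2}h,\mathcal B^{-1/2}\Pi N\rangle$, where $\Pi$ projects onto mean zero. Here $\mathcal B^{2m}h$ is mean-zero, so only $\Pi N$ contributes. Cauchy–Schwarz, the equivalences above, and the hypothesis on $N$ then give
$$
|\langle\mathcal B^{2m}h,N\rangle|\le Z^{1/2}\,C\|N\|_{H^{s-2}}
\le C\|h\|_{H^s}\|h\|_{H^{s+2}}Z^{1/2}\le C_0Y^{1/2}Z ,
$$
which holds as long as $h$ stays in the fixed $H^s$ ball. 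Hence
$$
Y'\le-(2-2C_0Y^{1/2})Z .
$$

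Finally I choose $\varepsilon>0$ with two properties: $2C_0\varepsilon\le1$, and the set $\{Y<\varepsilon^2\}$ lies inside the given $H^s$ ball, which is possible by the first equivalence. The smaller ball is $\{Y<\varepsilon^2\}$. On it, $Y'\le-Z\le-\eta Y$. For nonexit, suppose $Y(a)<\varepsilon^2$ and let $t^*$ be a first smooth time with $Y(t^*)=\varepsilon^2$. On $[a,t^*]$ the function $Y$ is nonincreasing, so $Y(t^*)\le Y(a)<\varepsilon^2$, a contradiction; continuity of $Y$ for a smooth solution makes this rigorous. Gronwall's inequality then yields $Y(t)\le e^{-\eta(t-a)}Y(a)$ for every later smooth time.

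The main technical point is the pairing step. The nonlinearity is controlled only in $H^{s-2}$, while the dissipation controls $H^{s+2}$, so the estimate must be split exactly as $\mathcal B^{1/2}$ on $h$ and $\mathcal B^{-1/2}$ on $N$ to absorb it into $Z$. The spectral gap is needed to make the negative-order operator $\mathcal B^{s/4-1/2}$ bounded from $H^{s-2}$ without a lower-order correction. The assumptions $m\ge2$ and $s>d/2+4$ are not used in this argument; they serve only to make the bound on $N$ available in applications.
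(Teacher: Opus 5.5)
Your proposal is correct and is essentially the paper's proof. It uses the same energy $\|\mathcal B^{s/4}h\|_2^2$ and the same norm equivalences from elliptic estimates plus the spectral gap. It then absorbs the nonlinear pairing into the dissipation once $C\|h\|_{H^s}\le 1$, giving $Y'\le -Z\le-\eta Y$, and closes with the same first-exit contradiction.

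There are two small slips, neither affecting the argument. First, the pairing you actually bound is the balanced one $\langle\mathcal B^{s/4+1/2}h,\mathcal B^{s/4-1/2}N(h)\rangle$, which is exactly the paper's, not the $\langle\mathcal B^{2m+1/2}h,\mathcal B^{-1/2}N(h)\rangle$ you wrote. Second, $\mathcal B^{s/4-1/2}$ has positive order $s-2$, so the gap is not needed for its $H^{s-2}\to L^2$ boundedness; it is needed for your two lower bounds and for $Z\ge\eta Y$.
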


\begin{proof}
All spectral powers in this proof act on mean-zero functions. The equation implies that $N(h(t))$ also has zero mean.
Elliptic estimates for this fixed positive operator give
$$
\|\mathcal B^{s/4}h\|_2\asymp\|h\|_{H^s},\qquad
\|\mathcal B^{(s+2)/4}h\|_2\asymp\|h\|_{H^{s+2}},
$$
and $\|\mathcal B^{(s-2)/4}v\|_2\le C\|v\|_{H^{s-2}}$.
For these orders, the statements follow from the estimates for integer powers and the quadratic form. G{\aa}rding's inequality controls $H^2$ by the form after a lower $L^2$ term; the positive spectral gap controls that lower term. Applying this to the integer powers gives the asserted equivalences. All constants concern one fixed operator.

Self-adjointness and the assumed bound imply
\begin{align*}
\frac{d}{dt}\|\mathcal B^{s/4}h\|_2^2
&=-2\|\mathcal B^{(s+2)/4}h\|_2^2
+2\langle\mathcal B^{(s+2)/4}h,\mathcal B^{(s-2)/4}N(h)\rangle\\
&\le-(2-C\|h\|_{H^s})\|\mathcal B^{(s+2)/4}h\|_2^2.
\end{align*}
Choose the ball so that $C\|h\|_{H^s}\le1$. The positive spectrum gives
$\|\mathcal B^{(s+2)/4}h\|_2^2\ge\eta\|\mathcal B^{s/4}h\|_2^2$.
Integration proves the decay. A first boundary-reaching time would contradict the decrease of this norm.
\end{proof}

\begin{proposition}\label{prop:stability}
Suppose either $R=0$, or that $\omega$ is Gauduchon, $R$ is constant and, for some $\eta>0$,
\begin{equation}\label{eq:gap}
2n^2\|Lh\|_2^2-4n(n-2)R\int_MhLh\,d\mu
-2(n-2)R^2\|h\|_2^2\ge\eta\|h\|_2^2
\end{equation}
for every smooth real $h$ with $\int_Mh\,d\mu=0$. There exist $\delta,C,\beta>0$ and, for every integer $k\ge0$, $C_k>0$, depending only on the fixed background and on $k$ where appropriate, such that every smooth normalized $\|f_0\|_\infty\le\delta$ gives a unique global smooth solution of~\eqref{eq:flow} satisfying
\begin{equation}\label{eq:main-decay}
\sup_{t\ge0}\|f(t)\|_\infty\le C\|f_0\|_\infty,\qquad
\|f(t)\|_{C^k(\omega)}\le C_k\|f_0\|_\infty e^{-\beta t}\quad(t\ge1).
\end{equation}
The threshold and rate are independent of $k$ and of all initial higher derivatives.
\end{proposition}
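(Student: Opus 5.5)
The plan is to treat the background $f\equiv0$ as a stationary point of \eqref{eq:flow} and to linearize around it, working with the mean-zero part $h=f-\int_Mf\,d\mu$. If $R=0$, then $R-nLf$ vanishes at $f=0$ and the right side of \eqref{eq:flow} is zero. If $R$ is constant and $\omega$ is Gauduchon, then $L^*1=0$ gives $L^*R=0$, so $f=0$ is again stationary. The volume constraint gives
\[
\int_Mf\,d\mu=-\tfrac n2\|h\|_2^2+O(\|h\|_\infty^3),
\]
so the mean is a quadratic function of $h$ and can be absorbed into the nonlinearity. Expanding \eqref{eq:flow} to first order, and using $L^*R=0$, I expect
\[
h_t+\mathcal Bh=N(h),\qquad
\mathcal B=2n^2L^*L-2n(n-2)R(L+L^*)-2(n-2)R^2,
\]
projected onto mean-zero functions. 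Since $f=0$ is a constrained critical point, $\mathcal B$ is the $L^2(d\mu)$ Hessian of $E$ there, so it is symmetric. This is consistent with its quadratic form being exactly the left side of \eqref{eq:gap}, because $\langle h,(L+L^*)h\rangle=2\int_M hLh\,d\mu$. In the case $R=0$ we have $\mathcal B=2n^2L^*L$, and its gap on mean-zero functions comes from $\ker L=\mathbb R$ (Lemma~\ref{lem:hessian}); in the other case the gap is the hypothesis \eqref{eq:gap}. In both cases $\mathcal B$ is a fourth-order self-adjoint operator with positive principal symbol $2n^2|\xi|^4/4$, so Lemma~\ref{lem:abstract-stability} applies once the nonlinear bound is checked. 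The nonlinearity $N(h)$ is a sum of terms that are at least quadratic, of the forms $a(h)\,h\,\nabla^4h$, $a(h)\,\nabla^ih\,\nabla^jh$ with $i+j\le4$, and cubic analogues, together with the quadratic mean correction. Tame Moser product estimates in $H^{s-2}$ with $s>d/2+4$ should give $\|N(h)\|_{H^{s-2}}\le C\|h\|_{H^s}\|h\|_{H^{s+2}}$ on a small $H^s$ ball.

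The central difficulty is that the hypothesis controls only $\|f_0\|_\infty$, while Lemma~\ref{lem:abstract-stability} needs $\|h\|_{H^s}$ small, and \eqref{eq:main-decay} requires bounds linear in $\|f_0\|_\infty$. Proposition~\ref{prop:local} by itself is not enough: a small sup norm does give oscillation at most $2\delta$ at every scale, so it yields a fixed lifetime $t_1=cr_0^4$ and derivative bounds $C_k$ on $[t_1/2,t_1]$, but these bounds are not proportional to $\|f_0\|_\infty$. I would therefore use the Koch--Lamm type fixed-point construction of Appendix~\ref{app:local} directly. Near the background, the contraction is run in the space with weighted norm $\sup_t\sum_{j\le4}t^{j/4}\|\nabla^jf(t)\|_\infty$. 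For small data the solution norm in that space is at most $C\|f_0\|_\infty$, since the linear biharmonic semigroup satisfies this bound and the quadratic nonlinearity is a contraction there. This gives $\sup_{[0,t_1]}\|f\|_\infty\le C\|f_0\|_\infty$ and $\|f(t_1)\|_{C^k}\le C_k\|f_0\|_\infty$. Parabolic smoothing with derivatives up to order $s+2$ requires differentiating the fixed-point equation or iterating Schauder estimates, still linearly in the data. Hence $\|h(t_1)\|_{H^s}\le C\|f_0\|_\infty$, and it lies in the ball of Lemma~\ref{lem:abstract-stability} once $\delta$ is small.

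From time $t_1$ on, Lemma~\ref{lem:abstract-stability} keeps $h$ in the ball as long as the solution is smooth. It also gives $\|h(t)\|_{H^s}\le C\|f_0\|_\infty e^{-\eta(t-t_1)/2}$. By Sobolev embedding ($s>d/2$) this controls $\|f\|_{C^4}$, hence $\|f\|_\infty$ and the curvature $R_f$ uniformly. Theorem~\ref{thm:continuation}, or simply restarting Proposition~\ref{prop:local} with a fixed lifetime, then excludes finite-time breakdown, so the solution is global. The mean is controlled by \eqref{eq:mean-control}, which yields the sup bound in \eqref{eq:main-decay} for all $t\ge0$. For the $C^k$ decay with $k$ arbitrary, I would apply Proposition~\ref{prop:local} on unit windows: its higher-derivative smoothing, interpolated against the exponentially small $H^s$ norm, and then run through the same linear-in-data small-data smoothing started at times $t-1$, gives $\|f(t)\|_{C^k}\le C_k\|f_0\|_\infty e^{-\beta t}$ with $\beta=\eta/2$, independent of $k$. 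Uniqueness follows from the smooth uniqueness in Proposition~\ref{prop:local}.

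I expect the main obstacle to be the linear-in-data smoothing step from $\|f_0\|_\infty$ to $\|h(t_1)\|_{H^s}$, since it is where independence from the initial higher derivatives is earned. A secondary point to verify carefully is that the linearization is exactly the symmetric operator whose form is \eqref{eq:gap} when $L$ is not self-adjoint; this uses that $\omega$ is Gauduchon (or that $R=0$), so that terms involving $L^*1$ and $L^*R$ drop out.
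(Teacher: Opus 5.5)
Your outline matches the paper's proof. It uses the same operator $\mathcal B$, whose quadratic form is the left side of \eqref{eq:gap}, and the same $H^{s-2}$ bound feeding Lemma~\ref{lem:abstract-stability}. It needs the same linear-in-data smoothing at a fixed positive time to enter the ball, restarts for global existence, and a restart of that smoothing at $t-h_0$ for $C^k$ decay. That restart is what gives a rate independent of $k$; the interpolation variant you also mention would degrade both the rate and the linear dependence on $\|f_0\|_\infty$. The paper gets the smoothing step, which you correctly single out as the crux, from Proposition~\ref{prop:small-local}. Your own construction of that step does not work as written.

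A contraction in $\sup_t\sum_{j\le4}t^{j/4}\|\nabla^jf(t)\|_\infty$ cannot close. $L^\infty$ is the scale-invariant space for this equation, and not all quadratic terms have divergence form. The unnormalized equation contains $\tfrac{n+2}{2}e^{-2g}(Lg)^2$, which is not a divergence, since its integral is positive. Your norm only gives $\|(Lu)^2(s)\|_\infty\lesssim s^{-1}$, so its Duhamel contribution to $\|u\|_\infty$ can only be bounded by $\int_0^ts^{-1}\,ds=\infty$. This is a genuine unboundedness, not a lossy estimate. Take $u=\sum_k\phi(2^{4k}t)\cos(2^kx_1)$ with $\phi$ a bump near $1$: it has finite norm in your space, but the Duhamel integral of $(\Delta u)^2$ diverges, since each dyadic scale contributes about $\tfrac12\int\phi^2$. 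The component $t\|\nabla^4u\|_\infty$ fails too: it would need $L^\infty$ maximal regularity for $L^*((1-e^{-2u})Lu)$, which fails because fourth derivatives of the biharmonic kernel have $L^1$ norm of order $(t-s)^{-1}$.

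This is why the space $X_T$ in \eqref{eq:X} keeps only $\|u\|_\infty+t^{1/4}\|\nabla u\|_\infty$ pointwise. It controls $D^2u$ through the Carleson-type quantity $\rho^{-d}\int_{B_\rho\times(0,\rho^4)}|D^2u|^2$, which places $(D^2u)^2$ in $Y_{0,T}$, and through a terminal $L^q$ norm. In that space the fixed point satisfies $\|u\|_{X_T}\le C\|f_0\|_\infty$. The bounds $\|f(h_0)\|_{C^k}\le C_k\|f_0\|_\infty$, with a threshold independent of $k$, then come from differentiating the fixed-point equation in translation parameters via the implicit function theorem. Replace your fixed-point paragraph by Proposition~\ref{prop:small-local} and the rest of your argument goes through.
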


\begin{proof}[Proof of Proposition~\ref{prop:stability}]
Write $h=f-\int_Mf\,d\mu$. Volume normalization reconstructs
$$
f=h-\frac1n\log\int_Me^{nh}d\mu.
$$
On a fixed small $H^s$ ball with $s>n$, Taylor's formula and $\int_Mh=0$ give
\begin{equation}\label{eq:constant-correction}
\left|\frac1n\log\int_Me^{nh}d\mu\right|\le C\|h\|_2^2,
\qquad \|f\|_{H^s}\le C\|h\|_{H^s}.
\end{equation}
Subtracting the spatial mean from the flow and imposing this reconstruction are equivalent to~\eqref{eq:flow}: the possible difference of reconstructed velocities is constant, and its weighted integral is zero.

For $R=0$, the linearization on mean-zero functions is $-2n^2L^*L$. For constant $R$ on a Gauduchon background it is
\begin{equation}\label{eq:linearization}
-2n^2L^*L+2n(n-2)R(L+L^*)+2(n-2)R^2I.
\end{equation}
To verify the latter, use the normalized path
$f=ah-n^{-1}\log\int_Me^{nah}d\mu$ for mean-zero $h$. Expansion of the energy gives
$$
E(f)=R^2+a^2\left[n^2\|Lh\|_2^2
-2n(n-2)R\int_MhLh\,d\mu-(n-2)R^2\|h\|_2^2\right]+O(a^3).
$$
At a critical point the varying inner product contributes no term to the linearization of the gradient. Polarizing this second variation gives~\eqref{eq:linearization}.

We verify the nonlinear bound required by Lemma~\ref{lem:abstract-stability}. In the scalar-flat case, after subtracting the spatial mean, the remainder consists of
\begin{align*}
&-2n^2(e^{-2f}-1)L^*Lh
-2n^2(n-2)e^{-2f}\langle\nabla h,\nabla Lh\rangle_\gamma\\
&-2n^2(n-2)e^{-2f}\{L^*h-(L^*1)h\}Lh\\
&-n^2(n-2)^2e^{-2f}|\nabla h|_\gamma^2Lh
-n^2(n-2)e^{-2f}(Lh)^2.
\end{align*}
These terms follow by substituting~\eqref{eq:adjoint-exponential} into~\eqref{eq:flow}. The constant part of $f$ occurs only in $e^{-2f}$, because $L$ and $L^*-(L^*1)I$ kill constants.
For the Gauduchon constant-curvature case, add
$$
2n(n-2)R(e^{-2f}-1)(L+L^*)h
+n(n-2)^2R e^{-2f}|\nabla h|_\gamma^2
-(n-2)R^2(e^{-2f}-1+2f),
$$
again subtracting the spatial mean. The last term includes the constant correction in~\eqref{eq:constant-correction}; the linear part has already been removed in~\eqref{eq:linearization}.

Choose $s=4m>n+4$. The space $H^{s-2}$ is a multiplication algebra. The coefficient $e^{-2f}-1$ has $H^{s-2}$ norm at most $C\|h\|_{H^s}$. The four-derivative term is thus bounded by $C\|h\|_{H^s}\|h\|_{H^{s+2}}$. The term with factors of derivative orders one and three is bounded by $C\|h\|_{H^s}\|h\|_{H^{s+1}}$. Products of two second derivatives are bounded by $C\|h\|_{H^s}^2$, and the products of orders two, one and one by $C\|h\|_{H^s}^3$. The additional constant-curvature terms have at most two derivatives and at least two small factors. By~\eqref{eq:constant-correction}, their $H^{s-2}$ norms have the same bound. Subtracting the mean is a bounded operator in these norms. This verifies the required nonlinear estimate.

The fixed operator $2n^2L^*L$ is self-adjoint and elliptic, preserves mean-zero functions and has a positive spectral gap there. Its kernel is the constants, since its quadratic form is $2n^2\|Lh\|_2^2$ and the maximum principle gives $\ker L=\R$. Compactness of its resolvent supplies the gap. In the constant-curvature case, the negative of~\eqref{eq:linearization} is self-adjoint, elliptic, preserves mean-zero functions and has the positive gap~\eqref{eq:gap}. Therefore Lemma~\ref{lem:abstract-stability} applies in each case.

Choose its small energy ball so that the reconstructed $f$ also obeys half the $C^0$ threshold in Proposition~\ref{prop:small-local}. That proposition provides a common time $h_0$ and
$\|f(h_0)\|_{H^s}\le C_s\|f_0\|_\infty$.
Reduce the initial threshold to place the centered $h(h_0)$ strictly inside the chosen ball. The energy argument prevents exit and gives
$$
\|f(t)\|_{H^s}\le C_s\|f_0\|_\infty e^{-\beta(t-h_0)}
$$
at every later smooth time, for a fixed $\beta>0$.

If the maximal smooth interval had a finite endpoint, restart Proposition~\ref{prop:small-local} sufficiently near that endpoint. Its common lifespan and smooth uniqueness extend the original trajectory past the endpoint, a contradiction. Thus the solution is global.
For every $t\ge2h_0$, restart at $t-h_0$ and apply the same proposition:
$$
\|f(t)\|_{C^k}\le C_k\|f(t-h_0)\|_\infty
\le C_k'\|f_0\|_\infty e^{-\beta(t-2h_0)}.
$$
Since $2h_0<1$, this gives~\eqref{eq:main-decay}. The local estimate and the $H^s$ decay give the all-time supremum bound. Only one fixed $s$ was used to choose the initial threshold; it is independent of $k$.
\end{proof}

\begin{proof}[Proof of Theorem~\ref{thm:flat-main}]
Choose the initial curvature threshold at most one. Proposition~\ref{prop:scalar-critical}, for the fixed exponent $p$, bounds $\|f_0\|_\infty$ linearly by the initial norm. Reduce the threshold so that this lies in the fixed neighborhood of Proposition~\ref{prop:stability}. Its scalar-flat case proves global existence and all the stated estimates.
\end{proof}

\begin{proof}[Proof of Corollary~\ref{cor:ricci-initial}]
In both cases the trace-free Chern--Ricci form of the background vanishes.
Proposition~\ref{prop:tensor-critical} therefore bounds $\|f_0\|_\infty$
linearly by the norm in the corollary. If $\Ric^{\Ch}(\omega)=0$, then
$R=0$, and Proposition~\ref{prop:stability} applies.

In the positive K\"ahler--Einstein case, $R=n\kappa>0$. The K\"ahler Bochner identity is
$$
\int_Mg^{i\bar k}g^{j\bar\ell}h_{ij}h_{\bar k\bar\ell}d\mu
=\int_M(Lh)^2d\mu-\int_M\Ric^{i\bar j}h_i h_{\bar j}d\mu.
$$
It follows by commuting one derivative of $\bar\partial h$,
$$
g^{j\bar\ell}\nabla_jh_{\bar k\bar\ell}
=(Lh)_{\bar k}+\Ric_{q\bar k}g^{q\bar a}h_{\bar a},
$$
and integrating against $-g^{i\bar k}h_i$. For an eigenfunction $Lh=-\lambda h$, the identity and $\Ric^{\Ch}(\omega)=\kappa\omega$ imply $\lambda\ge\kappa=R/n$. The left-hand side of~\eqref{eq:gap} on this eigenfunction is
$$
2\big[n^2\lambda^2+2n(n-2)R\lambda-(n-2)R^2\big]\|h\|_2^2.
$$
The polynomial is increasing for $\lambda>0$ and at $\lambda=R/n$ equals $2(n-1)R^2$. Expansion in the orthonormal eigenbasis proves~\eqref{eq:gap}.

We may therefore apply the second part of Proposition~\ref{prop:stability}. In each case reduce the initial curvature
threshold so that the preceding supremum bound enters that proposition.
Its global existence and decay estimates give the stated conclusions.
\end{proof}

\subsection{Failure of attraction to the background}

\begin{proposition}\label{prop:nonattraction}
For each $n\ge3$, there are smooth compact connected K\"ahler manifolds
$(M^n,\omega)$ without boundary and of volume one, with positive constant
Chern scalar curvature, and also such backgrounds with negative constant
Chern scalar curvature, having the following property. On each fixed
background there is a sequence of smooth real functions $f_{0,j}$ such that
$$
\int_Me^{nf_{0,j}}\dd\mu=1,\qquad
f_{0,j}\longrightarrow0\quad\text{in }C^\infty(M).
$$
If the maximal smooth solution $f_j$ of \eqref{eq:flow} with initial value
$f_{0,j}$ exists for all $t\ge0$, then $f_j(t)$ does not converge to zero
in $C^0(M)$ as $t\to\infty$.
\end{proposition}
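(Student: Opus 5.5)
We will find, on each background, an eigendirection with negative constrained second variation, and then use the energy dissipation identity \eqref{eq:dissipation} together with the Łojasiewicz-type Lemma~\ref{lem:ls} at the background to rule out convergence.

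\emph{Choice of backgrounds.} We use product K\"ahler manifolds with constant scalar curvature, such as $M=\Sigma_1\times\Sigma_2\times\cdots$ built from round spheres and hyperbolic Riemann surfaces with $\dim_{\mathbb C}M=n$, with product metric rescaled to unit volume. On a K\"ahler background $L=L^*$ and the metric is Gauduchon. The computation in the proof of Proposition~\ref{prop:stability} then shows that for normalized $f=ah-n^{-1}\log\int e^{nah}$, with $\int h=0$,
$$
E(f)=R^2+a^2\bigl[n^2\|Lh\|_2^2-2n(n-2)R\int_M hLh\,d\mu-(n-2)R^2\|h\|_2^2\bigr]+O(a^3).
$$
Take an eigenfunction $Lh=-\lambda h$ with $\lambda>0$. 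The bracket is $\bigl[n^2\lambda^2+2n(n-2)R\lambda-(n-2)R^2\bigr]\|h\|_2^2$.

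\emph{Making the bracket negative.} We need $\lambda$ small compared with $|R|$.
\begin{itemize}
\item For $R>0$, take $M=\mathbb{CP}^1(\varepsilon)\times N$. Here the factor $N$ is a flat torus of dimension $n-1$, or a product with a large-radius sphere factor. Scale so that $R$ is dominated by the small sphere while a large flat or large-sphere factor has first eigenvalue $\lambda\ll R$. Then renormalize the volume, which only scales everything homogeneously; the ratio $\lambda/R$ is scale invariant. Using a large flat torus factor and a small sphere factor, $\lambda/R\to0$, and the bracket tends to $-(n-2)R^2<0$.
\item For $R<0$, take a small hyperbolic surface factor times a large flat torus. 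Again $\lambda/|R|\to0$, and the bracket is negative since $n\ge3$.
\end{itemize}
This gives a smooth mean-zero $h$ with $E(f_a)<E(0)=R^2$ for all small $a\ne0$. Set $f_{0,j}=f_{a_j}$ with $a_j\to0$. These are normalized and tend to $0$ in $C^\infty$.

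\emph{Non-convergence.} Suppose $f_j$ is global and $f_j(t)\to0$ in $C^0$. The energy is nonincreasing by \eqref{eq:dissipation}, so $E(f_j(t))\le E(f_{0,j})<R^2$ for all $t$.

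The main obstacle is to upgrade $C^0$ convergence to a convergence that gives $E(f_j(t))\to R^2$, since $E$ is not $C^0$ continuous. First, $C^0$ convergence to $0$ gives a uniform bound $\|f_j(t)\|_\infty\le B$ for large $t$ and an eventually small oscillation. Proposition~\ref{prop:local} restarted at each late time then gives uniform $C^k$ bounds on the tail, exactly as in \eqref{eq:smooth}. Compactness plus uniqueness of the $C^0$ limit then gives $f_j(t)\to0$ in every $C^k$. Hence $E(f_j(t))\to E(0)=R^2$, contradicting $E(f_j(t))\le E(f_{0,j})<R^2$.

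Lemma~\ref{lem:ls} is not even needed; only monotonicity of $E$ and smoothing are used. The remaining care is to check the sign computation and that the background examples are connected, K\"ahler, of volume one, and of constant Chern scalar curvature (which equals the Riemannian scalar curvature up to a factor in the K\"ahler case).
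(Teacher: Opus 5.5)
Your proof is correct, and up to the last step it matches the paper's: you use the same product backgrounds (round $\mathbb{CP}^1$ or a hyperbolic curve times a large flat torus, rescaled to volume one), the same torus eigenfunction and normalized path $f_a$, the same sign analysis of the bracket as $\lambda/|R|\to0$, and the same energy monotonicity from \eqref{eq:dissipation}.

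You differ in how you contradict $E(f_j(t))\le E(f_{0,j})<R^2$. You treat the failure of $C^0$ continuity of $E$ as the main obstacle. You remove it by restarting Proposition~\ref{prop:local} along the tail: $C^0$-smallness supplies both the amplitude bound and the oscillation condition at the fixed scale $r_0$. This yields uniform $C^k$ bounds, hence convergence to $0$ in every $C^k$ and $E(f_j(t))\to R^2$. That argument is valid.

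The paper avoids parabolic estimates entirely. Since $\omega$ is K\"ahler, $\int_M Lf\dd\mu=0$ and $\mu(M)=1$, so every smooth $f$ satisfies
$E(f)\ge e^{-(n-2)\|f\|_\infty}\int_M(R-nLf)^2\dd\mu=e^{-(n-2)\|f\|_\infty}\bigl(R^2+n^2\|Lf\|_2^2\bigr)\ge e^{-(n-2)\|f\|_\infty}R^2$.
Thus $E$ is lower semicontinuous at $0$ in $C^0$, which is exactly the direction needed, so the obstacle you identify does not actually arise.

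The elementary bound also gives more. Combined with monotonicity, it yields $\|f_j(t)\|_\infty\ge(n-2)^{-1}\log\bigl(R^2/E(f_{0,j})\bigr)>0$ at every time of the maximal interval. This is a quantitative, time-uniform separation from the background, obtained without assuming global existence and without the appendix's common-lifetime theory. Your route instead shows that a $C^0$-convergent tail would converge in every $C^k$, but that extra regularity is not needed for this proposition.
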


\begin{proof}
On a K\"ahler background with $R$ constant, one has $L=L^*$ and
$L1=0$. Thus $f=0$ is a stationary solution of \eqref{eq:flow}, with
$E(0)=R^2$.

For $R>0$, take $\mathbb{CP}^1\times T^{n-1}$ with the product of a round
metric and a flat metric. Increasing the scale of the flat factor leaves
$R$ unchanged and makes a positive torus eigenvalue $\lambda$ of $-L$
arbitrarily small. For $R<0$, replace $\mathbb{CP}^1$ by a compact
hyperbolic curve and use the same flat factor. Rescaling the whole product
to volume one multiplies $R$ and $\lambda$ by the same positive factor.
In either case, choose and fix a normalized background for which
$$
n^2\lambda^2+2n(n-2)R\lambda-(n-2)R^2<0.
$$
This is possible because $\lambda/|R|$ can be made arbitrarily small.

Let $h$ be a nonzero real eigenfunction, extended from the torus factor,
with $Lh=-\lambda h$. Since $\int_MLh\dd\mu=0$ and $\lambda>0$,
$\int_Mh\dd\mu=0$. For real $s$ near zero, set
$$
f_s=sh-\frac1n\log\int_Me^{nsh}\dd\mu.
$$
These functions are volume-normalized and tend to zero in $C^\infty$.
The normalized second variation computed in the proof of
Proposition~\ref{prop:stability} gives
$$
E(f_s)=R^2+s^2\bigl[n^2\lambda^2+2n(n-2)R\lambda-(n-2)R^2\bigr]
\|h\|_2^2+O(s^3).
$$
For all sufficiently small nonzero $s$, it follows that $E(f_s)<R^2$.
Choose a sequence $s_j\downarrow0$ in this range and put $f_{0,j}=f_{s_j}$.
The background remains fixed throughout this sequence.

By \eqref{eq:dissipation}, every corresponding smooth solution satisfies
$E(f_j(t))\le E(f_{0,j})<R^2$. On the other hand, for every smooth real $f$,
$$
E(f)\ge e^{-(n-2)\|f\|_\infty}\int_M(R-nLf)^2\dd\mu
=e^{-(n-2)\|f\|_\infty}\bigl(R^2+n^2\|Lf\|_2^2\bigr).
$$
The equality uses $\int_MLf\dd\mu=0$ and $\mu(M)=1$. If a global solution
$f_j(t)$ converged to zero in $C^0$, this lower bound would give
$\liminf_{t\to\infty}E(f_j(t))\ge R^2$, contradicting the strict upper
bound. This proves the assertion.
\end{proof}

\appendix
\section{Quantitative local existence and smoothing}\label{app:local}

The estimates in this appendix prove Proposition~\ref{prop:local} and the small-amplitude statement used in Section~\ref{sec:initial}. Write, on the underlying real manifold,
$$
L=\tfrac12\Delta_\gamma+b\cdot\nabla,
\qquad L^*=\tfrac12\Delta_\gamma-b\cdot\nabla-\divmu b.
$$ Direct expansion gives
\begin{equation}\label{eq:adjoint-identity}
wL^*V=L^*(wV)-\divmu(V\nabla w)+VLw.
\end{equation}
The first-order drift cancels in this identity. We will also use
\begin{equation}\label{eq:adjoint-exponential}
\begin{split}
L^*(e^{(n-2)f}V)=e^{(n-2)f}\big[&L^*V+(n-2)\langle\nabla f,\nabla V\rangle_\gamma\\
&+(n-2)\{L^*f-(L^*1)f\}V+\tfrac12(n-2)^2|\nabla f|_\gamma^2V\big].
\end{split}
\end{equation}
The term $L^*1$ is retained because a scalar-flat background need not be Gauduchon.

\subsection{Linear estimates}

In the Euclidean estimates below, $d\ge1$ is arbitrary and $q>2(d+4)$ is fixed. For the application to $M$, take $d=2n$. For a radius $\rho>0$, we use the cylinders $B_\rho(x)\times(0,\rho^4)$ and $B_\rho(x)\times(\rho^4/2,\rho^4)$. On a fixed compact background, $T$ will be small enough that every radius $\rho<T^{1/4}$ lies in uniformly controlled coordinate charts. On $\R^d$ no restriction on chart radii is needed. Define
\begin{equation}\label{eq:X}
\begin{aligned}
\|u\|_{X_T}={}&\sup_{0<t<T}\big(\|u(t)\|_\infty+t^{1/4}\|\nabla u(t)\|_\infty\big)\\
&+\sup_{x,\,\rho^4<T}\left(\rho^{-d}\int_{B_\rho(x)\times(0,\rho^4)}|\nabla^2u|^2\right)^{1/2}\\
&+\sup_{x,\,\rho^4<T}\left(\rho^{-d}\int_{B_\rho(x)\times(0,\rho^4)}|\nabla u|^4\right)^{1/4}\\
&+\sup_{x,\,\rho^4<T}\rho^{2-(d+4)/q}\|\nabla^2u\|_{L^q(B_\rho(x)\times(\rho^4/2,\rho^4))}.
\end{aligned}
\end{equation}
For $j=0,1,2$, define
\begin{equation}\label{eq:Y}
\begin{aligned}
\|F\|_{Y_{j,T}}={}&
\sup_{x,\,\rho^4<T}\rho^{-d(4-j)/4}\|F\|_{L^{4/(4-j)}(B_\rho(x)\times(0,\rho^4))}\\
&+\sup_{x,\,\rho^4<T}\rho^{4-j-(d+4)(4-j)/(2q)}
\|F\|_{L^{2q/(4-j)}(B_\rho(x)\times(\rho^4/2,\rho^4))}.
\end{aligned}
\end{equation}
These definitions also apply to tensors, using their pointwise norms. All spacetime integrals on $M$ use $d\mu\,dt$. The time supremum in~\eqref{eq:X} is an essential supremum for
distribution solutions. The spaces consist of distributions with finite
displayed norms and are Banach spaces. We keep the source triple
$(F_0,F_1,F_2)$ explicit and equip the direct sum of the source spaces
with the norm $\sum_{j=0}^2\|F_j\|_{Y_{j,T}}$.
For the compact equation below, $F_0$ is scalar, $F_1$ is a one-form,
and $F_2$ is a covariant two-tensor; the indicated adjoints determine
their divergences.

The constants in this section depend on the dimension, on the fixed smooth coefficients and coordinate system in the compact case, and on the indicated exponents. They are independent of $T$ in the Euclidean result and uniform for $T$ in a fixed sufficiently small interval in the compact result.

\begin{lemma}\label{lem:euclidean}
For $0<T<\infty$, the solution of
$$
v_t+\Delta^2v=F_0+\partial_iF_1^i+\partial_i\partial_jF_2^{ij},
\qquad v(0)=v_0,
$$
on $\R^d$ satisfies
\begin{equation}\label{eq:linear-euclidean}
\|v\|_{X_T}\le C\left(\|v_0\|_\infty+
\sum_{j=0}^2\|F_j\|_{Y_{j,T}}\right).
\end{equation}
For bounded $v_0$ and sources in the stated spaces, the biharmonic kernel gives a distribution solution with initial distribution trace $v_0$. The constant is independent of $T$. The same assertion holds for the square of any fixed positive definite constant-coefficient second-order operator, uniformly when its coefficient matrix stays in a fixed compact subset of the positive definite matrices.
\end{lemma}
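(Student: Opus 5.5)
Write the solution by Duhamel's formula with the biharmonic kernel $b_t(x)=t^{-d/4}b(xt^{-1/4})$, where $b$ is Schwartz with $|\partial^\alpha b(x)|\le C_\alpha e^{-c|x|^{4/3}}$. This gives
$$
v(t)=b_t*v_0+\int_0^t b_{t-s}*F_0(s)\,ds+\int_0^t \partial_ib_{t-s}*F_1^i(s)\,ds+\int_0^t\partial_i\partial_jb_{t-s}*F_2^{ij}(s)\,ds .
$$
Each contribution is estimated separately in each of the four pieces of the $X_T$ norm. The approach is that of Koch--Lamm \cite{KL12}. First, every norm in \eqref{eq:X} and \eqref{eq:Y} is invariant under the parabolic scaling $v\mapsto v(\lambda x,\lambda^4t)$, $F_j\mapsto\lambda^{4-j}F_j(\lambda x,\lambda^4t)$. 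Translation invariance also holds. So each local estimate only needs to be proved for the unit cylinder $B_1(0)\times(0,1)$ (respectively $B_1\times(1/2,1)$) and for the pointwise bounds at one time $t$ and one point. The constant is then automatically independent of $T$.

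The homogeneous part is the easy case. The bounds $\|b_t*v_0\|_\infty\le C\|v_0\|_\infty$ and $t^{1/4}\|\nabla b_t*v_0\|_\infty\le C\|v_0\|_\infty$ hold because $\|\nabla^kb_t\|_{L^1}\le Ct^{-k/4}$. The Carleson-type quantities come from the energy identity for the biharmonic heat flow. Localize $v_0$ as $v_0\mathbf 1_{B_2}+v_0\mathbf 1_{B_2^c}$. For the near part, $\int_0^1\!\int|\nabla^2 b_t*g|^2\le C\|g\|_2^2$, and $\|g\|_2^2\le C\|v_0\|_\infty^2$. The far part is smooth on $B_1\times(0,1)$ with all derivatives bounded by $C\|v_0\|_\infty$, by the decay of the kernel. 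The $|\nabla u|^4$ term follows by Gagliardo--Nirenberg, or directly from the pointwise gradient bound, since $\int_0^1t^{-1}\,dt$ diverges. I therefore plan to use the localized interpolation $\int|\nabla u|^4\lesssim\|u\|_\infty^2\int|\nabla^2u|^2$ plus a cutoff error. The $L^q$ piece on $(1/2,1)$ is a pointwise bound there, since $\nabla^2b_t*v_0$ is bounded for $t\ge1/2$.

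For the inhomogeneous terms, fix the unit cylinder and split $F_j=F_j\chi+F_j(1-\chi)$, with $\chi$ a cutoff of $B_2\times(0,1)$. The far part is handled by pointwise kernel bounds $|\nabla^k\partial^jb_{t-s}(x-y)|\le C(t-s)^{-(d+k+j)/4}e^{-c(|x-y|^4/(t-s))^{1/3}}$. Summing over a dyadic cover of $\mathbb R^d\times(0,1)$ by unit cylinders, each controlled by the $Y_j$ norm through the first, Morrey-type term, gives pointwise bounds for all derivatives. The near part is handled by global estimates. For $\nabla^2v$ in $L^2(\mathbb R^d\times(0,1))$, use maximal regularity with negative-order data. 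One checks $\|\nabla^2v\|_{L^2}\le C\|F_2\|_{L^2}$ and $\|F_2\|_{L^2}\le\|F_2\|_{L^{2}}$ (exponent $4/(4-2)=2$). Similarly $F_1\in L^{4/3}$ enters through $\nabla^{-1}$ and a parabolic Sobolev embedding, and $F_0\in L^1$ enters through the $L^\infty_tL^1_x$ energy bound together with Sobolev in $\dot H^{-2}$ style. Care is needed for $F_0\in L^1$, which is an endpoint, and I expect to use instead the kernel bound $\int_0^1\!\int|b_{t-s}|\,|F_0|$ for the sup norm. The $L^q$ piece on $(1/2,1)$ uses parabolic $L^{2q/(4-j)}$ Calderón--Zygmund and Sobolev theory on the top cylinder: $\nabla^2$ of the $F_2$ term is a singular integral bounded on $L^{q}$ (exponent $2q/2$). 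For $F_1\in L^{2q/3}$ and $F_0\in L^{q/2}$, parabolic Sobolev gains one or two derivatives, with $q/2>(d+4)$ matching the embedding. The earlier-time contribution on $(0,1/2)$ is again smooth via the Morrey part.

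The main obstacle is the sup and gradient bounds $\|v(t)\|_\infty+t^{1/4}\|\nabla v(t)\|_\infty$ in terms of the $Y_j$ norms. These are endpoint estimates: the Morrey part alone is critical and fails logarithmically near $s=t$. This is exactly why the supercritical $L^{2q/(4-j)}$ piece on top cylinders $(\rho^4/2,\rho^4)$ is included. The plan is to cover $(0,t)$ dyadically by the cylinders of scale $\rho^4\sim t-s$. Near $s=t$, use the top-cylinder $L^{2q/(4-j)}$ norm with Hölder against $\nabla^{k+j}b_{t-s}\in L^{(2q/(4-j))'}$, which converges because $q>2(d+4)$. At earlier times, use the Morrey norm, which gives geometric sums. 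Finally, existence as a distribution solution with trace $v_0$ follows from the Duhamel formula by density. The generalization to $(a^{ij}\partial_i\partial_j)^2$ with $a$ in a compact set of positive matrices follows by a linear change of variables, with the kernel bounds uniform in $a$.
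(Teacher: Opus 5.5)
There is a genuine gap in the near-field part of the initial-cylinder (Carleson) terms of $X_T$, namely $\rho^{-d/2}\|\nabla^2v\|_{L^2(B_\rho\times(0,\rho^4))}$ and the $L^4$ gradient term, for the sources $F_0$ and $F_1$. For $F_2$ your $L^2$ maximal-regularity bound is fine, since the multiplier has order zero. The tools you name for $j=0,1$ do not work, however.
\begin{itemize}
\item A one-derivative parabolic Hardy--Littlewood--Sobolev gain from $L^{4/3}$ only reaches $L^r$ with $1/r=3/4-1/(d+4)>1/2$.
\item $L^1$ is excluded from such embeddings altogether.
\item $Y_{0,T}$ gives $F_0\in L^1$ in space-time, not $L^\infty_tL^1_x$.
\item $L^1\not\subset\dot H^{-2}$ when $d\ge4$.
\end{itemize}
In fact the exponents $4/(4-j)$ are scale-critical. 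So no bound of $\|\nabla^2v\|_{L^2(B_1\times(0,1))}$ by $\|F_j\|_{L^{4/(4-j)}(B_2\times(0,1))}$ alone can hold. The rescaled bump $F_1=\epsilon^{-3(d+4)/4}\phi(x/\epsilon,t/\epsilon^4)$ has unit $L^{4/3}$ norm but gives $\|\nabla^2v\|_{L^2}^2\gtrsim\epsilon^{-d/2}$. An $L^1$-normalized bump for $F_0$ gives $\epsilon^{-d}$. The kernel bound you fall back on for $F_0$ only addresses the sup norm, not the Hessian.

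The missing idea, which is what the paper does, is that these critical quantities are controlled only through the sup bound. The order of steps matters.
\begin{enumerate}
\item First prove $\|v\|_\infty+t^{1/4}\|\nabla v\|_\infty\le C$. Use the terminal $L^{2q/(4-j)}$ norms on $B_2\times(1/2,1)$ and kernel decay elsewhere; your last paragraph essentially does this.
\item Then pair the equation with $\chi^4v$ and integrate over $(0,1)$. The source pairings are bounded by $\|F_0\|_{L^1}\|v\|_\infty$, by $C\|F_1\|_{L^{4/3}}(\|\chi Dv\|_{L^4}+\|v\|_\infty)$, and by $C\|F_2\|_{L^2}(\|\chi^2D^2v\|_{L^2}+\|Dv\|_{L^2(B_2\times(0,1))}+\|v\|_\infty)$.
\item Integration by parts gives $\int\chi^4|Dv|^4\le C\|v\|_\infty^2\int\chi^4|D^2v|^2+C\|v\|_\infty^4$. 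Absorption then bounds both integrals.
\end{enumerate}
You use this interpolation only for the homogeneous term, but it is needed for the inhomogeneous ones as well.

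There is also a smaller slip in your terminal $L^q$ Hessian estimate. Sources on $(1/4,1/2)$ are not separated in time from $t\in(1/2,1)$, so they are not ``smooth via the Morrey part''. Treat them as near sources using the terminal norm at radius $2^{-1/4}$; the paper splits at $1/4$ for this reason.
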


\begin{proof}
Let $b(x,t)$ be the kernel with spatial Fourier transform $e^{-t|\xi|^4}$. Differentiating its Fourier integral, integrating by parts in $\xi$, and scaling give, for all integers $\ell,m\ge0$,
\begin{equation}\label{eq:kernel}
|D^\ell b(x,t)|\le C_{\ell,m}t^{-(d+\ell)/4}
(1+|x|t^{-1/4})^{-m},\qquad t>0.
\end{equation}
The solution is the convolution of $b$ with $v_0$, plus the time convolutions of $b$, $\partial_i b$, and $\partial_i\partial_jb$ with the three sources. The sign of each differentiated source kernel does not affect the estimates. Under $v(x,t)\mapsto v(\rho x,\rho^4t)$ and $F_j(x,t)\mapsto \rho^{4-j}F_j(\rho x,\rho^4t)$, the norms in~\eqref{eq:X} and~\eqref{eq:Y} are invariant. It therefore suffices to estimate a point at time one and the cylinders of radius one. By linearity, we may assume that the right-hand side of~\eqref{eq:linear-euclidean}, before multiplication by $C$, equals one.

For a point at $(0,1)$, split each source into its portion in $B_2\times(1/2,1)$ and its complement. In the first portion, H\"older's inequality applied to $D^{j+k}b$, for $k=0,1$, is valid whenever
$$
\frac{2q}{4-j}>\frac{d+4}{4-j-k}.
$$
Formula~\eqref{eq:kernel} implies $D^\ell b\in L^a(\R^d\times(0,1))$ for each $a\ge1$ with $a(d+\ell)<d+4$. The source exponents $q/2$, $2q/3$ and $q$ satisfy these inequalities strictly. The near source norms are bounded by a finite spatial cover with cylinders appearing in~\eqref{eq:Y}. For the complement, either time is separated by $1/2$ or spatial distance is bounded below. Formula~\eqref{eq:kernel} then has a summable spatial majorant. A cover by unit balls and the initial-cylinder $L^{4/(4-j)}$ bounds control the contribution. The spatial $L^1$ norms of $b$ and $Db$ control the initial term. Scaling back gives
\begin{equation}\label{eq:pointwise-linear}
\|v\|_{L^\infty(\R^d\times(0,T))}
+\sup_{0<t<T}t^{1/4}\|Dv(t)\|_\infty\le C.
\end{equation}

We next estimate the two initial-cylinder integrals. Choose $0\le\chi\le1$, supported in $B_2$ and equal to one on $B_1$. Twice integrating by parts in the Hessian components gives
$$
\int_0^1\int\chi^4|D^2v|^2
\le2\int_0^1\int\chi^4(\Delta v)^2
+C\int_0^1\int_{B_2}|Dv|^2.
$$
The Hessian cross terms can be absorbed because $D(\chi^4)$ has a factor $\chi^3$ and $D^2(\chi^4)$ has a factor $\chi^2$. Pair the equation with $\chi^4v$ and integrate to time one. Discarding the nonnegative final-time term, the cutoff contributions are bounded by $C\int_{B_2\times(0,1)}(|Dv|^2+|v|^2)$ and a small multiple of the Hessian integral. The three source pairings are bounded by
\begin{align*}
&\|F_0\|_{L^1(B_2\times(0,1))}\|v\|_\infty,\\
&C\|F_1\|_{L^{4/3}(B_2\times(0,1))}
\left(\left(\int_0^1\int\chi^4|Dv|^4\right)^{1/4}+\|v\|_\infty\right),\\
&C\|F_2\|_{L^2(B_2\times(0,1))}
\left(\left(\int_0^1\int\chi^4|D^2v|^2\right)^{1/2}
+\|Dv\|_{L^2(B_2\times(0,1))}+\|v\|_\infty\right).
\end{align*}
A finite cover bounds these source norms. The cutoff weights follow from $\chi^{16}\le\chi^4$ and $\chi^8\le\chi^4$. Equation~\eqref{eq:pointwise-linear} gives $\int_{B_2\times(0,1)}|Dv|^2\le C\int_0^1t^{-1/2}dt\le C$.

Integrating $\operatorname{div}(\chi^4|Dv|^2Dv)$ against $v$, and using Young's inequality, gives
$$
\int_0^1\int\chi^4|Dv|^4
\le C\|v\|_\infty^2\int_0^1\int\chi^4|D^2v|^2+C\|v\|_\infty^4.
$$
Here $\|v\|_\infty\le C$ by~\eqref{eq:pointwise-linear}. The preceding energy pairing therefore yields
$$
\int_0^1\int\chi^4|D^2v|^2
\le C\left[1+
\left(\int_0^1\int\chi^4|D^2v|^2\right)^{1/2}
+\left(\int_0^1\int\chi^4|D^2v|^2\right)^{1/4}\right].
$$
Absorption bounds both integrals. Scaling proves the initial-cylinder terms of~\eqref{eq:X}.

It remains to control $D^2v$ in $L^q(B_1(0)\times(1/2,1))$. Separate sources in $B_2\times(1/4,1)$ from their complement. The far contributions and the initial term are controlled by~\eqref{eq:kernel}. The near sources have bounded $L^{2q/(4-j)}$ norms: the intervals $(1/4,1/2)$ and $(1/2,1)$ are the terminal intervals for radii $2^{-1/4}$ and one, respectively, and finite spatial covers suffice. For $F_0\in L^{q/2}$, Young's convolution inequality uses $D^2b\in L^{q/(q-1)}$; for $F_1\in L^{2q/3}$, it uses $D^3b\in L^{2q/(2q-1)}$. Both follow from $q>(d+4)/2$.

For a tensor $F_2\in L^q$, each component of the required spacetime operator has Fourier multiplier
$$
\frac{\xi_a\xi_b\xi_i\xi_j}{\sqrt{-1}\sigma+|\xi|^4}.
$$
It is bounded on $L^2$ by Plancherel's theorem. Away from the spacetime origin its kernel is $\mathbf1_{t>0}\partial_a\partial_b\partial_i\partial_jb(x,t)$. Formula~\eqref{eq:kernel} and its time derivative bound the kernel, its spatial gradient and its time derivative, respectively, by $C\max\{|x|,|t|^{1/4}\}^{-d-4}$, $C\max\{|x|,|t|^{1/4}\}^{-d-5}$ and $C\max\{|x|,|t|^{1/4}\}^{-d-8}$. At $x\ne0$ these quantities tend to zero as $t\downarrow0$, so the extension by zero has no off-diagonal jump. The bounds imply the integral H\"ormander smoothness condition in the parabolic metric. Metric balls have measure comparable to $\rho^{d+4}$ and are doubling. The Calder\'on--Zygmund theorem for an $L^2$-bounded singular integral on this homogeneous metric space therefore gives $L^q$ boundedness for $1<q<\infty$; see~\cite[Chapter~III]{CW71}. This proves the terminal-cylinder estimate.

For sources in the stated spaces, the kernel integrals converge by the pointwise estimates. The equation and the integral estimates can be obtained by first truncating the sources away from time zero and regularizing, and then passing to distributions and using weak lower semicontinuity on each fixed cylinder. One may choose the regularization scale smaller than the truncation scale; enlarged or shifted cylinders are covered by a fixed number of comparable admissible cylinders. Thus the source bounds remain uniform. The initial trace is the distribution trace of the homogeneous kernel term; the zero-initial-data terms have zero trace. Uniqueness for the constant-coefficient Cauchy problem in this bounded distribution class follows by testing against the explicit backward kernel. A constant linear change of variables proves the final assertion.
\end{proof}

\begin{lemma}\label{lem:compact-linear}
Let $M$ be a closed smooth real $d$-manifold with fixed smooth positive measure and background Riemannian metric. Let $L$ be a real smooth uniformly elliptic second-order scalar operator with $L1=0$. There exist $T_*>0$ and $C>0$ such that, for $0<T\le T_*$, the distribution solution of
$$
u_t+L^*Lu=F_0+\nabla^*F_1+(\nabla^2)^*F_2,
\qquad u(0)=u_0,
$$
has initial distribution trace $u_0$, is unique in $X_T$, and satisfies
\begin{equation}\label{eq:compact-linear}
\|u\|_{X_T}\le C\left(\|u_0\|_\infty+\sum_{j=0}^2\|F_j\|_{Y_{j,T}}\right).
\end{equation}
The signs chosen for the divergence adjoints do not affect the estimate. For smooth data the constructed solution agrees with the classical solution.
\end{lemma}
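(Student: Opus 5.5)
Plan: deduce Lemma~\ref{lem:compact-linear} from the Euclidean estimate of Lemma~\ref{lem:euclidean} by freezing coefficients at scale $T^{1/4}$, together with a small-time perturbation argument in the scale-invariant spaces $X_T$, $Y_{j,T}$.

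\emph{Step 1: freezing coefficients.} Fix a finite atlas of uniformly controlled charts and a subordinate partition of unity $\{\phi_\alpha\}$, together with slightly larger cutoffs $\tilde\phi_\alpha$. In a chart, write
\[
L^*L=\tfrac14 a^{ij}a^{kl}(x)\partial_i\partial_j\partial_k\partial_l+\text{lower order},
\]
with respect to the fixed measure. The principal part is the square of the second-order operator with matrix $\tfrac12 a^{ij}$. After rewriting the lower-order terms in divergence form, $L^*L$ becomes $\partial_i\partial_j(A^{ijkl}\partial_k\partial_l u)$ plus divergence-form terms of order at most three, so every error fits into one of the three source slots. For each point $x_0$ we then freeze $A$ at $x_0$ and invoke the constant-coefficient statement of Lemma~\ref{lem:euclidean}. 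This is uniform because the matrices lie in a compact set of positive definite matrices.

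\emph{Step 2: perturbation in $X_T$.} For $u\in X_T$ we estimate the errors. The variable-coefficient principal error is $\partial_i\partial_j((A(x)-A(x_0))\partial^2u)$ with $|A(x)-A(x_0)|\le C|x-x_0|$. We localize to balls of radius $\varepsilon$ around $x_0$ and put it in the $F_2$ slot, so its $Y_{2,T}$ norm is at most $C\varepsilon\|u\|_{X_T}$. This works because both $Y_{2,T}$ components are exactly the $L^2$ initial-cylinder and $L^q$ terminal-cylinder Hessian norms of $X_T$ at matching scalings.

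The lower-order terms are handled similarly:
\begin{itemize}
\item Terms of the form $\partial(c\,\partial^2u)$ go to $F_1$.
\item Terms $c\,\partial^2u$ and $c\,\partial u$ go to $F_2$ or $F_0$.
\end{itemize}
Each gains a positive power of $\rho\le T^{1/4}$ relative to the $X_T$ norm. For instance, $\|\partial^2u\|_{L^{4/3}}$ on $B_\rho\times(0,\rho^4)$ is controlled by the $L^2$ Hessian norm with an extra factor $\rho^{1}$ after H\"older. In the same way, $\partial u$ in $Y_{1}$ or $Y_0$ uses $t^{1/4}\|\nabla u\|_\infty$ and the $L^4$ gradient term.

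The cutoff commutators $[\partial^4,\tilde\phi]$ produce terms of order at most three. Their coefficients carry derivatives of the cutoff of size $\varepsilon^{-m}$, which are compensated by powers of $T^{1/4}/\varepsilon$. Hence, choosing $\varepsilon$ small and then $T_*\ll\varepsilon^4$, the total error is bounded by $\tfrac12\|u\|_{X_T}$ plus the data.

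\emph{Step 3: existence and uniqueness.} We run the same bound as a contraction. Define the parametrix
\[
\mathcal P(u_0,F)=\sum_\alpha\tilde\phi_\alpha\,S_\alpha(\phi_\alpha u_0,\phi_\alpha F),
\]
where $S_\alpha$ is the frozen Euclidean solution operator, and solve $u=\mathcal P(u_0,F)+\mathcal P(0,\mathcal E u)$ by a Neumann series in $X_T$. The localization of the sources is compatible with the $Y$ norms, since multiplication by a fixed cutoff and transfer between charts change them only by constants, and distributional divergences pick up lower-order pieces in lower slots. Uniqueness in $X_T$ follows from the same a priori inequality applied to a difference with zero data. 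Agreement with the classical solution for smooth data holds because the smooth solution lies in $X_T$ by standard parabolic regularity, and uniqueness then identifies the two. The initial trace is inherited from the Euclidean pieces, since the correction terms have zero data.

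\emph{Main obstacle.} I expect the hardest point to be the verification that every error term, including the cutoff commutators and the divergence-form rearrangement of $L^*L$ with the drift $b$ and $\divmu b$, lands in $Y_{j,T}$ with a small constant uniformly over all admissible cylinders, in particular the terminal $L^{2q/(4-j)}$ components. My first attempt would be a direct scaling check: each term of order $k\le3$ in $u$ placed in slot $j$ gains the factor $\rho^{4-k-j}$ or $|x-x_0|$, and nothing else is needed.
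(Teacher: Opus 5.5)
Steps 1--2 follow the paper's argument. You freeze $A^{ij}A^{k\ell}$ on small charts and apply Lemma~\ref{lem:euclidean} to $(A^{ij}_\alpha\partial_i\partial_j)^2$. You place the principal residual in $Y_{2,T}$ with only the oscillation as the small factor, and bound every $D^j(cD^ku)$ with $j+k\le3$ by the gain $T^{(4-j-k)/4}$, as in \eqref{eq:lower}. Lower-order pieces of localized sources go into lower slots, as in \eqref{eq:source-lowering}, and the chart radius is chosen before $T_*$. Your uniqueness argument (localize a zero-data solution and absorb) is also the paper's. Two things it needs that you leave implicit:
\begin{itemize}
\item writing $\phi_\alpha u$ as a frozen Euclidean solution uses the constant-coefficient distributional uniqueness in Lemma~\ref{lem:euclidean};
\item the absorption constant must not depend on the number of charts, which grows like $\varepsilon^{-d}$; this holds because a ball of radius at most $T^{1/4}$ meets uniformly many cutoff supports.
\end{itemize}

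The step that fails as written is existence. A fixed point of $u\mapsto\mathcal P(u_0,F)+\mathcal P(0,\mathcal Eu)$ in $X_T$ is not shown to solve the equation. First, the frozen operators change from chart to chart, so $\mathcal P$ inverts no single operator and $\mathcal Eu$ is not one global source. Second, even with per-chart errors and the commutators $[L^*L,\phi_\alpha]u$ inserted, the identity $u=\sum_\alpha\tilde\phi_\alpha S_\alpha(\cdots)$ is a left-parametrix relation that every solution satisfies. Its contraction gives the a priori bound and uniqueness, not that the fixed point is a solution.

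Existence needs $\mathcal P$ as a right parametrix, as in the paper. Compute
\begin{equation*}
(\partial_t+L^*L)\mathcal P(u_0,G)=G+\mathcal KG+\mathcal K_0u_0 .
\end{equation*}
Here $\mathcal K$ collects $\partial_i\partial_j\bigl(\tilde\phi_\alpha(A^{ij}A^{k\ell}-A^{ij}_\alpha A^{k\ell}_\alpha)\partial_k\partial_\ell v_\alpha\bigr)$, the commutators $[L^*L,\tilde\phi_\alpha]v_\alpha$, and the lower-order terms. All are evaluated on the local frozen solutions $v_\alpha=S_\alpha(\phi_\alpha u_0,\phi_\alpha G)$, not on $u$. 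Your Step~2 bounds applied to $v_\alpha$ give $\|\mathcal K\|\le C\varepsilon+C_\varepsilon T^{1/4}$ on the source direct sum, and the analogous bound for $\mathcal K_0$. Then
\begin{equation*}
u=\mathcal P\bigl(u_0,(I+\mathcal K)^{-1}(F-\mathcal K_0u_0)\bigr)
\end{equation*}
is a genuine solution satisfying \eqref{eq:compact-linear}, with initial trace $u_0$ because $\tilde\phi_\alpha\phi_\alpha=\phi_\alpha$.
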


\begin{proof}
For a bounded coefficient $c$ and $0\le j,k\le2$ with $j+k\le3$, the definitions and H\"older's inequality give
\begin{equation}\label{eq:lower}
\|cD^ku\|_{Y_{j,T}}\le C\|c\|_\infty
T^{(4-j-k)/4}\|u\|_{X_T}.
\end{equation}
For $j=k=2$ the same inequality holds without a time factor. On $B_\rho\times(0,\rho^4)$, use the $L^\infty$ bound for $k=0$, the $L^4$ bound for $k=1$, and the $L^2$ bound for $k=2$. The resulting power is $\rho^{4-j-k}$ in all three cases. On $B_\rho\times(\rho^4/2,\rho^4)$, use the pointwise gradient bound or the terminal $L^q$ Hessian bound, with $2q/(4-j)\le q$, and obtain the same power. Coordinate Hessians differ from intrinsic Hessians by a bounded coefficient times $Du$, which has a positive time gain. Likewise, if $0\le\ell\le j\le2$,
\begin{equation}\label{eq:source-lowering}
\|cF_j\|_{Y_{\ell,T}}\le C\|c\|_\infty
T^{(j-\ell)/4}\|F_j\|_{Y_{j,T}}.
\end{equation}

Write $A^{ij}$ for the principal coefficients of the operator $L$
in this lemma. In coordinates, the fourth-order principal part of $L^*L$ is
$$
\partial_i\partial_j(A^{ij}A^{k\ell}\partial_k\partial_\ell u).
$$
Every lower-order term can be written as $D^j(cD^ku)$ with $0\le j,k\le2$ and $j+k\le3$. In particular, a third derivative is put into first-divergence form acting on $D^2u$. Smooth volume densities only change the lower-order coefficients.

Choose coordinate balls of radius $\rho_0$ with uniformly bounded overlap, a partition of unity $\chi_\alpha$, and slightly larger cutoffs $\eta_\alpha$ equal to one near $\operatorname{supp}\chi_\alpha$. Their fixed-size enlargements have bounded overlap as well. Arrange $T^{1/4}\le c\rho_0$. Freeze the principal coefficients at the center of each chart. The frozen operator is $(A_\alpha^{ij}\partial_i\partial_j)^2$, so Lemma~\ref{lem:euclidean} applies uniformly.

For a source triple $F=(F_0,F_1,F_2)$, multiplying its associated
distribution by $\chi_\alpha$ and expanding the adjoints gives an exact local source triple. Terms that retain $j$ divergences are bounded in $Y_{j,T}$; those with fewer divergences are bounded using~\eqref{eq:source-lowering}. Extend these supported source coefficients by zero in the coordinate space. Solve the frozen zero-data equations and multiply their solutions by $\eta_\alpha$ before summing globally. This defines a linear map $\mathcal R$ from source triples to $X_T$. Define $\mathcal R_0u_0$ by the same construction with the frozen homogeneous solutions of initial data $\chi_\alpha u_0$. Their initial sum is $u_0$, because $\eta_\alpha\chi_\alpha=\chi_\alpha$.

For each local solution $v$, the only fourth-order residual is
$$
\partial_i\partial_j\big(\eta_\alpha(A^{ij}A^{k\ell}-A_\alpha^{ij}A_\alpha^{k\ell})
\partial_k\partial_\ell v\big).
$$
All commutators have the form in~\eqref{eq:lower}. For example, $(D\eta_\alpha)D^3v$ is rewritten as a first divergence of $(D\eta_\alpha)D^2v$ and a coefficient times $D^2v$. Passing back to intrinsic divergences introduces only terms covered by~\eqref{eq:source-lowering}. Consequently there are concrete operators $\mathcal K$ on the source direct sum and $\mathcal K_0$ on initial data such that
\begin{align*}
(\partial_t+L^*L)\mathcal RF
&=F_0+\nabla^*F_1+(\nabla^2)^*F_2
+(\mathcal KF)_0+\nabla^*(\mathcal KF)_1+(\nabla^2)^*(\mathcal KF)_2,\\
(\partial_t+L^*L)\mathcal R_0u_0
&=(\mathcal K_0u_0)_0+\nabla^*(\mathcal K_0u_0)_1
+(\nabla^2)^*(\mathcal K_0u_0)_2.
\end{align*}
If the oscillation of $A^{ij}A^{k\ell}$ in the enlarged charts is at most $\epsilon$, then
$$
\|\mathcal K\|\le C\epsilon+C_{\rho_0}T^{1/4},\qquad
\|\mathcal K_0u_0\|\le(C\epsilon+C_{\rho_0}T^{1/4})\|u_0\|_\infty.
$$
The constant in front of $\epsilon$ does not depend on the total number of charts: a ball of radius at most $T^{1/4}$ meets only uniformly many cutoff supports. Cutoff derivative constants are included in $C_{\rho_0}$. Choose $\rho_0$ first so that $C\epsilon<1/4$, and then choose $T_*$ so that $C_{\rho_0}T_*^{1/4}<1/4$. The inverse $(I+\mathcal K)^{-1}$ on the source direct sum exists by its Neumann series. The function
$$
u=\mathcal R_0u_0+\mathcal R(I+\mathcal K)^{-1}(F-\mathcal K_0u_0)
$$
solves the required equation and satisfies~\eqref{eq:compact-linear}.

For uniqueness, localize a zero-source, zero-initial-trace solution in $X_T$ and apply the frozen estimate to its coefficient errors and commutators. Bounded overlap gives
$$
\|u\|_{X_T}\le(C\epsilon+C_{\rho_0}T^{1/4})\|u\|_{X_T}.
$$
After a further fixed reduction of $\rho_0$ and $T_*$, the coefficient is less than one, so $u=0$. The localized frozen representation is legitimate by constant-coefficient distribution uniqueness. This proves independence of the source representation and agreement with a classical solution.
\end{proof}

The same localization argument applies to $L^*(e^{-2h}L)$ for a fixed smooth function $h$. More precisely, suppose $\|h\|_\infty\le B$ and $\|\nabla^jh\|_\infty\le C_j\delta r^{-j}$ for $j\ge1$. On radius-$r$ charts the principal coefficient has oscillation at most $C_B(\delta+r)$. The coefficients of total derivative order $\ell\le3$ are bounded by $C_Br^{-(4-\ell)}$. On $T=\vartheta r^4$, formula~\eqref{eq:lower} bounds their residual by $C_B\vartheta^{(4-\ell)/4}$. Choosing $\delta,r$ small and then $\vartheta$ small makes the Neumann-series residual less than $1/2$, exactly as in the preceding proof. Thus
\begin{equation}\label{eq:linear}
\|u\|_{X_T}\le C_B\left(\|u(0)\|_\infty+\sum_{j=0}^2\|F_j\|_{Y_{j,T}}\right)
\end{equation}
for $u_\tau+L^*(e^{-2h}Lu)=F_0+\divmu F_1+L^*F_2$. The latter second-order source is converted to coordinate second divergences and lower-order terms by the same localization. All constants are uniform over the functions $h$ satisfying these bounds.

On an initial cylinder the $L^2$ norm of $D^2u$ is at most $C\rho^{d/2}\|u\|_{X_T}$ and the $L^4$ norm of $Du$ is at most $C\rho^{d/4}\|u\|_{X_T}$. On a terminal cylinder their $L^q$ and $L^{2q}$ norms are bounded by
$$
C\rho^{-2+(d+4)/q}\|u\|_{X_T},\qquad
C\rho^{-1+(d+4)/(2q)}\|u\|_{X_T}.
$$
Consequently the products $uD^2u$, $DuD^2u$, $(D^2u)^2$ and $D^2u|Du|^2$ lie in $Y_{2,T},Y_{1,T},Y_{0,T},Y_{0,T}$, respectively, with quadratic or cubic norm bounds. On the initial cylinders this uses $L^\infty\cdot L^2$, $L^4\cdot L^2$, $L^2\cdot L^2$ and $L^2\cdot L^4\cdot L^4$; on terminal halves it uses $L^\infty\cdot L^q$, $L^{2q}\cdot L^q$, $L^q\cdot L^q$ and $L^q\cdot L^{2q}\cdot L^{2q}$. The resulting powers of $\rho$ are those in the definitions of the source spaces. Factoring one difference at a time gives locally Lipschitz bounds, and repeated differentiation gives smooth source maps on a fixed $X_T$ ball. Smooth scalar functions of $u$ act through its $L^\infty$ component.

\subsection{A common lifetime from a continuity scale}

\begin{proof}[Proof of Proposition~\ref{prop:local}]
Omit for the moment the last, spatially constant term in \eqref{eq:flow}, and rescale its time $s$ by $\tau=2n^2s$. The unnormalized solution $g$ then satisfies
\begin{equation}\label{eq:divergence}
\begin{split}
g_\tau={}&-L^*(e^{-2g}Lg)-n\divmu(e^{-2g}Lg\nabla g)\\
&+e^{-2g}\left[\frac{n+2}{2}(Lg)^2-\frac{n^2}{2}Lg|\nabla g|_\gamma^2\right]\\
&+\frac1nL^*(e^{-2g}R)+\divmu(e^{-2g}R\nabla g)\\
&+e^{-2g}\left[-\frac2nRLg+\frac{nR}{2}|\nabla g|_\gamma^2-\frac{n-2}{2n^2}R^2\right].
\end{split}
\end{equation}
For example, this follows by applying the product identity with $w=e^{-ng}$ and $V=e^{(n-2)g}(R-nLg)$ and using $L(e^{-ng})=e^{-ng}(-nLg+\tfrac{n^2}{2}|\nabla g|_\gamma^2)$.

Average $g_0$ by a smooth nonnegative normalized kernel supported at scale $r/8$, and denote the resulting smooth function by $h$. Subtracting the constant $g_0(x)$ when differentiating the normalized kernel gives
\begin{equation}\label{eq:average}
\|h\|_\infty\le B,\qquad \|h-g_0\|_\infty\le\delta,\qquad
\|\nabla^jh\|_\infty\le C_j\delta r^{-j}\quad(j\ge1).
\end{equation}
Write $g=h+u$ and work on $0<\tau<T=\vartheta r^4$. Estimate~\eqref{eq:linear}, in real dimension $d=2n$, applies to $u_\tau+L^*(e^{-2h}Lu)$ and its three source terms. The choices of $\delta,r_0,\vartheta$ make its inverse uniform.

We detail the nonlinear bounds. In the second-divergence source, the principal difference is
$$
(e^{-2h}-e^{-2(h+u)})Lu+(e^{-2h}-e^{-2(h+u)})Lh.
$$
The first term is bounded by $C_B\|u\|_\infty\|u\|_{X_T}$ in $Y_{2,T}$. The second contributes at most $C_B\delta\vartheta^{1/2}\|u\|_{X_T}$. The other highest-order products are $Lu\nabla u$, $(Lu)^2$ and $\frac12Lu|\nabla u|_\gamma^2$. On initial cylinders they are controlled respectively by $L^2\cdot L^4$, $L^2\cdot L^2$ and $L^2\cdot L^4\cdot L^4$; on terminal halves use $D^2u\in L^q$ and $Du\in L^{2q}$ in the preceding subsection. Their norms are quadratic or cubic in $\|u\|_{X_T}$. Each factor $\nabla^jh$ carries $\delta r^{-j}$, canceled by the corresponding power of $T^{1/4}$ from \eqref{eq:lower}.

The curvature terms in \eqref{eq:divergence} have total derivative order at most two. Their differences have a factor $T^{1/2}$ when both derivatives fall on $u$, a factor $\delta r^{-1}T^{3/4}$ when one falls on $h$, or a factor $\delta r^{-2}T$ when both fall on $h$; the term with $R^2$ has factor $T$. Thus on an $X_T$ ball of radius $\varepsilon\le1$ the difference of the nonlinear source tuples is bounded by
$$
C_B(\varepsilon+\delta\vartheta^{1/4}+T^{1/2})\|u-v\|_{X_T}.
$$
The right-hand side of \eqref{eq:divergence} at $g=h$ is a smooth source with $Y_{0,T}$ norm at most $C_BT(\delta r^{-4}+1)$. Choose $\varepsilon$ small first, and then shrink $\delta,\vartheta,r_0$ so that this source and $u(0)=g_0-h$ map into half the ball and the Lipschitz constant in \eqref{eq:linear} is less than $1/2$. The contraction mapping theorem gives a unique solution with $\|g-g_0\|_\infty\le1/8$ after a further reduction of these constants.

Here are the positive-time smoothing details. Rescale space by $r$ and time by $r^4$ in a smaller chart. All derivatives of $h$ are bounded by \eqref{eq:average}. Pull back $g$ by the flows of compactly supported coordinate vector fields, using a smooth time cutoff that vanishes near zero and equals one on a fixed closed interior time interval. In the smaller chart these flows are translations on that interval. The transformed principal coefficients change by $O(|a|)$ for translation parameter $a$; the other new terms have total order at most three, including the drift from the time cutoff. The linear and product estimates above bound these perturbations on $X_T$ uniformly for small $a$. Pullback is bounded on $X_T$: cylinders near zero are unchanged, and affected cylinders can be covered by finitely many comparable cylinders away from zero. The Hessian chain rule adds only a bounded coefficient times $Du$, controlled by \eqref{eq:lower}.

The transformed equation is a smooth map of $a$ and $u$ into the source spaces. Its derivative in $u$ is invertible by the same contraction bound. The smooth implicit function theorem therefore bounds every parameter derivative in $X_T$. On the smaller chart these are the spatial distribution derivatives of $g$, whose suprema are bounded by the $L^\infty$ component of $X_T$. The equation then gives all time derivatives. Spatial mollification and integration in time give continuous representatives of these derivatives, proving smoothness and the bounds $\|\nabla^kg\|_\infty\le C_kr^{-k}$ on closed interior time intervals. This argument uses only smooth background coefficients. For smooth initial data, the unnormalized equation has a unique classical solution on a short interval by~\cite[Theorem~1.1]{MM12}: its principal symbol is $-\tfrac14e^{-2g}(\gamma^{ij}\xi_i\xi_j)^2$, a negative product of two positive quadratic forms, and its lower coefficients depend smoothly on derivatives through order three. This solution lies in the contraction ball on a sufficiently short interval: the derivative components tend to zero as that interval shrinks. Uniqueness identifies the two solutions and supplies smoothness at the initial time.

Return to time $s$. Integration of the unnormalized equation gives
$$
\frac{d}{ds}\int_Me^{ng(2n^2s)}\dd\mu=-n(n-2)E(g(2n^2s)).
$$
Define
\begin{equation}\label{eq:normalize}
\begin{split}
t(s)&=\int_0^s\left(\int_Me^{ng(2n^2\sigma)}\dd\mu\right)^{-2/n}\dd\sigma,\\
f(t(s))&=g(2n^2s)-\frac1n\log\int_Me^{ng(2n^2s)}\dd\mu.
\end{split}
\end{equation}
Adding a constant $a$ multiplies the unnormalized vector field by $e^{-2a}$
and the energy by $e^{(n-2)a}$. Also
$$
\frac{\dd s}{\dd t}=\left(\int_Me^{ng(2n^2s)}\dd\mu\right)^{2/n},\qquad
-\frac1n\frac{d}{ds}\log\int_Me^{ng(2n^2s)}\dd\mu
=\frac{(n-2)E(g(2n^2s))}{\int_Me^{ng(2n^2s)}\dd\mu}.
$$
These identities verify \eqref{eq:flow} and the volume constraint.
Since $|g(2n^2s)-g_0|\le1/8$, the displayed volume lies between
$e^{-n/8}$ and $e^{n/8}$, and
$e^{-1/4}\le\dd t/\dd s\le e^{1/4}$. For $c=e^{-1/4}\vartheta/(4n^2)$, the interval $[0,cr^4]$ is contained in the constructed normalized interval. The corresponding $\tau$ values on its terminal half lie in
$[e^{-1/2}T/4,T/2]$, where the preceding smoothing applies. The subtracted constant in \eqref{eq:normalize} is at most $1/8$ in absolute value. This proves the stated bounds. For smooth uniqueness, given a normalized smooth solution, let
$\zeta'(t)=-(n-2)E(f(t))$, $\zeta(0)=0$, and
$\tau'(t)=2n^2e^{2\zeta(t)}$. Then
$g(\tau(t))=f(t)+\zeta(t)$ satisfies the unnormalized equation. The change of time is invertible on each smooth finite interval, so unnormalized uniqueness gives normalized uniqueness.
\end{proof}

\subsection{Existence and smoothing from small \texorpdfstring{$C^0$}{C0} data}

\begin{proposition}\label{prop:small-local}
Let $(M^n,\omega)$ be a fixed compact connected smooth Hermitian manifold without boundary, of complex dimension $n\ge3$ and volume one. Assume either $R=0$, or that $\omega$ is Gauduchon and $R$ is constant. There exist $\varepsilon_0,h_0,\tau_0>0$, with $2h_0<1$, such that every real smooth $f_0$ satisfying the volume constraint and $\|f_0\|_\infty\le\varepsilon_0$ has a unique smooth solution of~\eqref{eq:flow} on $[0,h_0+\tau_0]$. For all integers $k\ge0$,
\begin{equation}\label{eq:small-smoothing}
\sup_{0\le t\le h_0+\tau_0}\|f(t)\|_\infty\le C\|f_0\|_\infty,
\qquad \|f(h_0)\|_{C^k}\le C_k\|f_0\|_\infty.
\end{equation}
The constants depend only on the fixed background and on $k$ where appropriate. The threshold $\varepsilon_0$ and the times are independent of all initial derivatives and of $k$.
\end{proposition}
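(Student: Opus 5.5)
The plan is to reproduce the contraction argument behind Proposition~\ref{prop:local}, but to linearize around the background $g\equiv0$ rather than a mollified initial function, so that every estimate becomes linear in $\|f_0\|_\infty$. For $R=0$ or constant $R$ on a Gauduchon background, $f=0$ is stationary: the source in \eqref{eq:divergence} at $g=0$ reduces to the constant $-\tfrac{n-2}{2n^2}R^2$, because $L^*1=0$ in the Gauduchon case and $R=0$ otherwise. I would therefore work with the unnormalized equation in the time $\tau=2n^2s$ and first subtract the explicit spatially constant solution $c(\tau)$ of that constant forcing. Then I write $g=c(\tau)+u$, where $u$ solves
$$
u_\tau+L^*Lu=F_0(u)+\divmu F_1(u)+L^*F_2(u),\qquad u(0)=f_0 .
$$
The sources collect the principal differences $(1-e^{-2(c+u)})Lu$, the quadratic and cubic products listed after \eqref{eq:linear}, and the curvature terms $R(e^{-2(c+u)}-e^{-2c})$ times at most two derivatives of $u$.

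Lemma~\ref{lem:compact-linear} applies on a fixed interval $T\le T_*$ that does not depend on $f_0$, because the coefficients are frozen at the smooth background. It gives $\|u\|_{X_T}\le C(\|f_0\|_\infty+\sum_j\|F_j\|_{Y_{j,T}})$. The product estimates of the previous subsection bound the nonlinear sources, and their differences, by $C(\|u\|_\infty+T^{1/4})\|u\|_{X_T}$ on a small $X_T$ ball. The curvature terms carry positive powers of $T$ by \eqref{eq:lower}, and the factor $e^{-2c}-1=O(T)$ is harmless. Having fixed $T=2n^2(h_0+\tau_0)$ small, I take $\varepsilon_0$ small. The contraction mapping theorem then gives a unique solution with $\|u\|_{X_T}\le C\|f_0\|_\infty$. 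As in the proof of Proposition~\ref{prop:local}, \cite[Theorem~1.1]{MM12} identifies it with the smooth classical solution, and the renormalization \eqref{eq:normalize} returns to \eqref{eq:flow}. The normalizing constant is controlled by \eqref{eq:constant-correction} together with $|c(\tau)|\le CT$. Since both the normalized $f$ and the zero background are stationary-compatible, the relevant constant equals $\frac1n\log\int e^{nu}$, which is $O(\|u\|_\infty)$. This yields the first estimate in \eqref{eq:small-smoothing}.

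For the $C^k$ bound at time $h_0$, which lies in the interior of the interval, I would repeat the translation and implicit-function argument from the smoothing part of Proposition~\ref{prop:local}. I pull back by flows of coordinate vector fields after a time cutoff vanishing near $0$, and differentiate in the parameter $a$. The linearized operator is uniformly invertible on the ball, and the source vanishes at $u=0$. Hence each $a$-derivative of order $k$ is bounded in $X_T$ by $C_k\|f_0\|_\infty$: the inhomogeneous terms generated by differentiation are at least linear in $u$ and its lower $a$-derivatives. The $L^\infty$ component of $X_T$ then gives $\|\nabla^kf(h_0)\|_\infty\le C_k\|f_0\|_\infty$, with constants depending only on the background and $k$.

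\textbf{Main obstacle.} The delicate step is linearity in $\|f_0\|_\infty$ for higher derivatives. The implicit-function bounds must be organized inductively so that every forcing term retains a factor of $u$; the commutators of the coordinate-flow pullback, for example, are linear in $u$ with smooth background coefficients. Care is also needed with the constant $R^2$ term in the non-flat Gauduchon case, which is handled by the explicit ODE for $c(\tau)$ and the normalization.
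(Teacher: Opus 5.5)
Your proposal is correct and follows essentially the paper's route: you subtract the exact spatially constant solution (the paper's $a(\tau)$, with $e^{2a}=1-\frac{(n-2)R^2}{n^2}\tau$), run the contraction for $u$ against the fixed operator $L^*L$ via Lemma~\ref{lem:compact-linear}, use that the constant cancels in $f=u-\frac1n\log\int_Me^{nu}\dd\mu$, and obtain derivative bounds linear in $\|f_0\|_\infty$ by applying the implicit function theorem to the translated fixed-point map, which vanishes at $u=0$.

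To tighten it: $c$ must solve $c_\tau=-\frac{(n-2)R^2}{2n^2}e^{-2c}$, not the ODE with constant right-hand side, so that $u=0$ is exact. The Lipschitz constant of quadratic sources such as $(Lu)^2$ and $Lu\nabla u$ is governed by $\|u\|_{X_T}$ rather than $\|u\|_\infty$, which is harmless on your small $X_T$ ball. Finally, take $T$ somewhat larger than $2n^2(h_0+\tau_0)$, as the paper does with $h_0=5T/(16n^2)$ and $\tau_0=T/(16n^2)$, so that the right endpoint lies inside the range where interior smoothing applies.
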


\begin{proof}
We first omit the spatially constant term $(n-2)E(f)$ in~\eqref{eq:flow} and use the time $\tau$ equal to $2n^2$ times the unnormalized flow time. We denote its solution by $g$. When $R=0$, its equation is
\begin{equation}\label{eq:unnormalized-zero}
g_\tau=-e^{-ng}L^*(e^{(n-2)g}Lg)
-\frac{n-2}{2}e^{-2g}(Lg)^2.
\end{equation}
By~\eqref{eq:adjoint-identity}, its source terms relative to $L^*L$ are
$$
L^*((1-e^{-2g})Lg)-n\divmu(e^{-2g}Lg\nabla g)
+e^{-2g}\left[\frac{n+2}{2}(Lg)^2-\frac{n^2}{2}Lg|\nabla g|_\gamma^2\right].
$$
For $\|u\|_{X_T}\le1$, the product estimates proved above give
\begin{equation}\label{eq:nonlinear-bound}
\begin{split}
&\|(1-e^{-2u})Lu\|_{Y_{2,T}}+\|ne^{-2u}Lu\nabla u\|_{Y_{1,T}}\\
&\quad+\left\|e^{-2u}\left[\frac{n+2}{2}(Lu)^2-\frac{n^2}{2}Lu|\nabla u|_\gamma^2\right]\right\|_{Y_{0,T}}
\le C\|u\|_{X_T}^2.
\end{split}
\end{equation}
Differences have the bound $C(\|u\|_{X_T}+\|v\|_{X_T})\|u-v\|_{X_T}$, by factoring one polynomial or exponential difference at a time. These are smooth source maps on the fixed ball.

When the background is Gauduchon with constant $R$, expansion of the unnormalized equation adds to the right-hand side of~\eqref{eq:unnormalized-zero} the expression
$$
e^{-2g}\left[\frac{(n-2)R}{n}(L+L^*)g
+\frac{(n-2)^2R}{2n}|\nabla g|_\gamma^2-\frac{(n-2)R^2}{2n^2}\right].
$$
To check this, expand $R-nLg$ in the unnormalized equation and apply~\eqref{eq:adjoint-exponential} with $V=1$. The Gauduchon identity $L^*1=0$ eliminates its zeroth-order contribution.

Let $a(\tau)$ solve $a(0)=0$ and
$$
a_\tau=-\frac{(n-2)R^2}{2n^2}e^{-2a},\qquad
e^{2a(\tau)}=1-\frac{(n-2)R^2}{n^2}\tau.
$$
For the scalar-flat case, set $a=0$. Fix $0<T\le\min\{1,T_*\}$, reducing it in the constant-curvature case so that $(n-2)R^2T/n^2<1/20$. Write $g=a+u$. The initial value of $u$ is $f_0$, and $u=0$ is an exact solution with zero initial value. The right-hand side of the equation $u_\tau+L^*Lu$ has the following source representation:
\begin{align*}
&L^*\!\left(e^{-2a}(1-e^{-2u})Lu+(1-e^{-2a})Lu\right)
\\
&\quad-n\divmu\!\left(e^{-2(a+u)}Lu\,\nabla u\right)\\
&\quad+e^{-2(a+u)}\left[\frac{n+2}{2}(Lu)^2-\frac{n^2}{2}Lu\,|\nabla u|_\gamma^2\right]\\
&\quad+\frac{(n-2)R}{n}e^{-2(a+u)}(L+L^*)u
+\frac{(n-2)^2R}{2n}e^{-2(a+u)}|\nabla u|_\gamma^2\\
&\quad-\frac{(n-2)R^2}{2n^2}e^{-2a}(e^{-2u}-1).
\end{align*}
The extra highest-order source has coefficient $1-e^{-2a}=O(T)$. The second-order linear term has $Y_{0,T}$ norm bounded by $CT^{1/2}\|u\|_{X_T}$, by~\eqref{eq:lower}. The gradient-square term also has the factor $T^{1/2}$: on $B_\rho\times(0,\rho^4)$, use $Du\in L^4$ and then $L^2\subset L^1$ with the cylinder-volume factor; on $B_\rho\times(\rho^4/2,\rho^4)$, use $|Du|\le C\|u\|_{X_T}\rho^{-1}$. The last term has the factor $T$. Factoring differences gives the same estimates for the Lipschitz bound. All these constants may depend on the fixed $R$.

Lemma~\ref{lem:compact-linear} and~\eqref{eq:nonlinear-bound} now yield a contraction on a fixed ball in $X_T$. Choose $T$ so that all the extra linear contributions have operator norm less than $1/8$. Then choose the ball radius independently of $f_0$ so that the nonlinear Lipschitz contribution is less than $1/8$, and finally choose the initial $C^0$ threshold to place the homogeneous term inside this ball. The fixed point is unique there and satisfies
\begin{equation}\label{eq:X-small}
\|u\|_{X_T}\le C\|f_0\|_\infty.
\end{equation}

For smooth initial data, the unnormalized equation has a classical smooth short-time solution by~\cite[Theorem~1.1]{MM12}. The principal symbol is the negative of $\tfrac14e^{-2g}(\gamma^{ij}\xi_i\xi_j)^2$, and all lower coefficients are smooth functions of $g$ and derivatives through order three. The fourth-order tensor is the product of the two positive definite second-order factors, with the scalar factor $e^{-2g}$ absorbed into one of them, as required in that theorem. On a sufficiently short interval, the classical solution has small $X_T$ norm: all positive-derivative components of the norm tend to zero for fixed smooth data. It therefore coincides with the constructed solution by small-ball uniqueness.

We next prove positive-time smoothing with bounds independent of the initial derivatives. Fix a coordinate ball, choose smooth compactly supported vector fields $V_1,\ldots,V_d$ which are its coordinate fields on a smaller ball, and choose a smooth function $\theta$ of time equal to zero on $[0,T/4]$ and one on $[T/2,T]$. For $\zeta\in\R^d$ small, let $\Phi_{\zeta,\tau}$ be the time-$\theta(\tau)$ flow of $\sum_i\zeta_iV_i$. On a still smaller ball, $\Phi_{\zeta,\tau}(x)=x+\zeta$ for $\tau\ge T/2$.

Pullback by these maps is uniformly bounded on $X_T$. Cylinders with $\rho^4\le T/4$ are unchanged. For affected cylinders, $\rho\ge(T/4)^{1/4}$, and their images are covered by a fixed number of comparable cylinders when $\zeta$ is small. In the Hessian chain rule, the additional term $D^2\Phi\,Du$ has one lower derivative; its initial $L^2$ and terminal $L^q$ bounds have an extra factor $\rho$. The Jacobians and inverse Jacobians are uniformly bounded. These facts prove the pullback bound at both parts of~\eqref{eq:X}.

The function $v_\zeta=u\circ\Phi_{\zeta,\tau}$ satisfies the pulled-back equation in distributions and has the same initial value. Using the original operator $L^*L$, its equation differs from the fixed-point equation by a second divergence of a tensor coefficient times $D^2v$, where the coefficient is $O(|\zeta|)$ in $L^\infty$, and terms $D^j(c_\zeta D^kv)$ with $j+k\le3$. There is also a time-dependent first-order drift, bounded by $C_T|\zeta|$. Writing that drift as a divergence plus a zeroth-order term, and using~\eqref{eq:lower}, controls it in the source spaces. The nonlinear pulled-back terms retain the product orders just estimated. Their coefficients depend smoothly on $\zeta$. Thus the fixed-point equation for $v_\zeta$ remains a contraction in a common small ball after reducing $|\zeta|$. First fix the ball to include the uniform pullback bound, and reduce $\|f_0\|_\infty$ accordingly; this choice is independent of the derivative order to be estimated.

Let $v^{\mathrm{lin}}$ solve
$v_\tau^{\mathrm{lin}}+L^*Lv^{\mathrm{lin}}=0$ with
$v^{\mathrm{lin}}(0)=f_0$. Let $\mathcal N_\zeta$ be the map obtained
by applying the zero-initial-data solution operator in
Lemma~\ref{lem:compact-linear} to the source triple of the pulled-back
equation. It is smooth in $(\zeta,v)$, has $\mathcal N_\zeta(0)=0$, and has $\|D_v\mathcal N_\zeta\|<1/2$ on the common ball. The smooth implicit function theorem applied to $v-v^{\mathrm{lin}}-\mathcal N_\zeta(v)$ gives smooth dependence on $\zeta$. Differentiating once gives
$$
(I-D_v\mathcal N_\zeta(v_\zeta))\partial_{\zeta_i}v_\zeta
=\partial_{\zeta_i}\mathcal N_\zeta(v_\zeta).
$$
The inverse has norm at most two. Since $\partial_{\zeta_i}\mathcal N_\zeta(0)=0$, the right-hand side is bounded by $C_T\|v_\zeta\|_{X_T}$ and hence by $C_T\|f_0\|_\infty$. Each higher parameter derivative has the same invertible leading operator; every remaining term either vanishes at $v=0$ or contains a previously estimated derivative. Induction yields
$$
\|\partial_\zeta^\alpha v_\zeta\|_{X_T}\le C_{\alpha,T}\|f_0\|_\infty
$$
for all multi-indices $\alpha$ on one smaller parameter neighborhood. The actual distribution pullback lies in the common uniqueness ball, so it equals this fixed point. For $\tau>T/2$ on the inner chart, its parameter derivatives at zero are $D_x^\alpha u$ in distributions. The $L^\infty$ component of $X_T$, followed by a finite cover, proves
\begin{equation}\label{eq:interior-smoothing}
\sup_{T/2\le\tau\le3T/4}\|u(\tau)\|_{C^k}\le C_{k,T}\|f_0\|_\infty.
\end{equation}
Initially the estimates hold almost everywhere in spacetime. Every spatial derivative has a bounded distribution time derivative by the equation, since all spatial derivatives are now bounded. Spatial mollification and the fundamental theorem of calculus give compatible representatives continuous in space and locally Lipschitz in time. The equation then supplies a continuous time derivative; iteration supplies all mixed derivatives. Thus the solution is smooth on the positive-time interval and~\eqref{eq:interior-smoothing} holds at every displayed time. Applying this argument on smaller construction intervals proves smoothing at every positive time.

It remains to recover the exact volume-one flow and a common time. Define
\begin{align*}
t(\tau)&=\frac1{2n^2}\int_0^\tau
\left(\int_Me^{ng(\sigma)}d\mu\right)^{-2/n}d\sigma,\\
f(t(\tau))&=g(\tau)-\frac1n\log\int_Me^{ng(\tau)}d\mu.
\end{align*}
The unnormalized volume identity is
\begin{equation}\label{eq:unnormalized-volume}
\frac{d}{d\tau}\int_Me^{ng(\tau)}d\mu=-\frac{n-2}{2n}E(g(\tau)).
\end{equation}
It follows by integrating the adjoint term against $1$, using $L1=0$. The unnormalized vector field in the original time scales by $e^{-2c}$ under adding a constant $c$, while $E(g+c)=e^{(n-2)c}E(g)$. Since $d\tau/dt=2n^2(\int_Me^{ng}d\mu)^{2/n}$, differentiation of the displayed definition gives exactly~\eqref{eq:flow}. Its volume is one. This calculation applies in both background cases.

The constant reference $a$ cancels in the normalized factor:
$$
f=u-\frac1n\log\int_Me^{nu}d\mu.
$$
Since $\left|n^{-1}\log\int e^{nu}d\mu\right|\le\|u\|_\infty$, equation~\eqref{eq:X-small} gives the desired $C^0$ bound. The unnormalized volume is at most one by~\eqref{eq:unnormalized-volume}, and
$$
e^{n\min a-nC\|f_0\|_\infty}\le\int_Me^{ng}d\mu\le1.
$$
Reduce $\varepsilon_0$ so that $e^{2\min a-2C\varepsilon_0}>4/5$. The change of time then satisfies
$$
\frac45<\frac1{2n^2}\frac{d\tau}{dt}\le1.
$$
Set $h_0=5T/(16n^2)$ and $\tau_0=T/(16n^2)$. At $t=h_0$, the corresponding $\tau$ lies strictly between $T/2$ and $5T/8$. At $\tau=7T/8$, the normalized time is at least $7T/(16n^2)>h_0+\tau_0$. The uniform positive-time derivative estimates apply throughout the needed interior interval, giving~\eqref{eq:small-smoothing}. The chosen $T\le1$ gives $2h_0<1$.

Smooth uniqueness follows from the unnormalization argument in the proof of Proposition~\ref{prop:local}.
\end{proof}

\vspace{0.3cm}

{\small{\bf{Acknowledgments:}} {Part of this work was done during Ma's visit to Tianyuan mathematical center in southwest China in November of 2025. 
}

\vspace{0.5cm}


\textbf{Data Availability Statement}
Data sharing is not applicable to this article because no datasets were generated or analyzed during the current study.


\textbf{Conflict of interest} The authors declare that they have no conflict of interest.

\textbf{Code availability}
Not applicable.

\end{document}